\newcommand{\h}{\mathfrak H}
\newcommand{\Gb}{{\boldsymbol G}}
\newcommand{\Gbp}{{\boldsymbol G'}}
\newcommand{\gb}{{\boldsymbol g}}
\newcommand{\betab}{{\boldsymbol \beta}}
\newcommand{\Fb}{{\boldsymbol F}}
\newcommand{\Wb}{{\boldsymbol W}}
\newcommand{\vp}{\varphi}
\newcommand{\vpb}{{\boldsymbol \vp}}
\newcommand{\lmod}{\backslash}
\newcommand{\ddx}[1]{\textstyle\frac{\partial}{\partial#1}}
\newcommand{\ddp}[2]{\textstyle\frac{\partial^{#2}}{\partial#1^{#2}}}
\newcommand{\dvh}[2]{\tfrac{d#1\,d#2}{#2^2}}
\newcommand{\Sz}{\mathcal S}
\newcommand{\F}{\mathcal F}
\newcommand{\W}{\mathcal W}
\newcommand{\Hal}{\mathcal H}
\newcommand{\ord}{{\mathcal O}}
\newcommand{\ordx}{{\mathcal O}^\times}
\newcommand{\ordp}{{\mathcal O}_p}
\newcommand{\ordpx}{{{\mathcal O}^\times_p}}
\newcommand{\diva}{D}
\newcommand{\valr}{R}
\newcommand{\gs}{\mathcal G}
\newcommand{\Prj}{\mathbb P}
\newcommand{\Z}{\mathbb Z}
\newcommand{\Zp}{{\mathbb Z}_p}
\newcommand{\Zpx}{{\mathbb Z}_p^\times}
\newcommand{\Q}{\mathbb Q}
\newcommand{\R}{\mathbb R}
\newcommand{\C}{\mathbb C}
\newcommand{\A}{\mathbb A}
\newcommand{\tr}{{\rm tr}}
\newcommand{\dist}{{\rm dist}}
\newcommand{\diag}{{\rm diag}}
\newcommand{\sgn}{{\rm sgn}}
\newcommand{\e}{{\rm e}}
\newcommand{\vol}{{\rm vol}}
\newcommand{\volx}{{{\rm vol}^\times\!}}
\newcommand{\volo}{{{\rm vol}^{(1)}\!}}
\newcommand{\dti}{{d^\times\!}}
\newcommand{\dvt}{{d_v^\times\!}}
\newcommand{\dpt}{{d_p^\times\!}}
\newcommand{\dit}{{d_\infty^\times}}
\newcommand{\don}{{d^{(1)}\!}}
\newcommand{\GL}{{\rm GL}}
\newcommand{\PGL}{{\rm PGL}}
\newcommand{\SL}{{\rm SL}}
\newcommand{\PSL}{{\rm PSL}}
\newcommand{\SO}{{\rm SO}}
\newcommand{\GO}{{\rm GO}}
\newcommand{\GSp}{{\rm GSp}}
\newcommand{\M}{{\rm M}}
\newcommand{\Ad}{{\rm Ad}}
\newcommand{\We}{\mathsf W}
\newcommand{\Nil}{\mathsf N}
\newcommand{\spp}{\mathsf{sp}}
\newcommand{\ov}[1]{\overline{#1}}
\newcommand{\und}[1]{\underline{#1}}
\newcommand{\eps}{\epsilon}
\newcommand{\veps}{\varepsilon}
\newcommand{\hf}{\tfrac{1}{2}}
\newcommand{\ind}{\mathbb I}
\newcommand{\fin}{{\rm fin}}
\newcommand{\ti}{^\times}
\newcommand{\ic}{{\rm i}}
\newcommand{\comment}[1]{}
\newtheorem{thm}{Theorem}
\newtheorem{lem}{Lemma}
\newtheorem{cor}{Corollary}
\newenvironment{mat}[1]{\begin{array}{#1}}{\end{array}}
\begin{document}







\doublespacing \pagenumbering{roman}

 {\thispagestyle{empty}
 \sc
 \vspace*{0in}
 \begin{center}
   \LARGE Rankin Triple Products\\ and Quantum Chaos
 \end{center}
 \vspace{.6in}
 \begin{center}
   Thomas C. Watson
 \end{center}
 \vspace{.6in}
 \begin{center}
   A Dissertation \\
   Presented to the Faculty \\
   of Princeton University \\
   in Candidacy for the Degree \\
   of Doctor of Philosophy
 \end{center}
 \vspace{.3in}
 \begin{center}
   Recommended for Acceptance \\
   by the Department of \\
   Mathematics
 \end{center}
 \vspace{.3in}
 \begin{center}
   June 2002\\ Revised 10/22/2008
 \end{center}
 \clearpage}

 \thispagestyle{empty}
 \vspace*{0in}
 \begin{center}
   \copyright\ Copyright by Thomas C. Watson, 2002. \\
   All Rights Reserved
 \end{center}
 \clearpage

\singlespacing

 \newpage
 \addcontentsline{toc}{section}{Abstract}
 \begin{center}
 \Large \textbf{Abstract}
 \end{center}

In this paper we demonstrate the chaotic nature of certain
special arithmetic quantum dynamical systems, using machinery
from analytic number theory.

Consider the quantized geodesic flow on a finite-volume
hyperbolic surface $\Gamma\lmod\h$, with $\Gamma\subset\SL_2\R$
consisting of the norm-1 units of an Eichler order in an
indefinite quaternion algebra $B$ over $\Q$. Such $\Gamma$
generalize the congruence subgroups of $\SL_2\Z$ and are
co-compact whenever $B$ is ramified.  For $\Gamma=\SL_2\Z$, we
prove that high energy bound eigenstates obey the Random Wave
conjecture of Berry/Hejhal for third moments. In fact we show
that the third moment of a wave's amplitude distribution decays
as $E^{-\frac1{12}}$ in the high energy limit. In the more
general case of maximal orders, we reduce an optimal
quantitative version of the Quantum Unique Ergodicity
conjecture of Rudnick--Sarnak to the Lindel\"of Hypothesis,
itself a consequence of the Riemann Hypothesis, for particular
families of automorphic $L$-functions. Furthermore, our
analysis shows that any lowering of the exponent in the
Phragmen--Lindel\"of convexity bound implies QUE\@. In the
moment problem as well, a decisive role is played by `breaking
convexity.' That is to say, the boundaries of non-trivial
exponents precisely agree when translated between physical and
arithmetical formulations, for both of these problems.

We accomplish this translation by proving identities expressing
triple-correlation integrals of eigenforms in terms of central
values of the corresponding Rankin triple-product
$L$-functions. Very general forms of such identities were
proved by Harris--Kudla, and certain (more explicit) classical
versions of these were given by Gross--Kudla and
B\"ocherer--Schulze-Pillot, for definite quaternion algebras.
In using the Harris--Kudla method to prove our own classical
identities, we have to solve two main problems.

The first problem is to explicitly compute the adjoint of
Shimizu's theta lift, which realizes the Jacquet--Langlands
correspondence by transferring automorphic forms from $\GL_2$
to $\GO(B)$, the latter being nearly the same as
$B\ti{\times}B\ti$. As is well known, theta liftings from
metaplectic to orthogonal groups are generally more difficult
to characterize than lifts in the opposite direction, which can
be evaluated directly in terms of Whittaker functions. Since
$B\ti$ and hence $\GO(B)$ have `multiplicity-one'---as
Jacquet--Langlands proved with the trace-formula---we are able
to determine the adjoint of Shimizu's lift by duality, from
explicit knowledge of Shimizu's lift itself. It is, however,
necessary to generalize Shimizu's original calculations, since
he only considered averages of lifts over isotypic bases of
forms, which allowed him to employ Godement's theory of
spherical functions. In order to deal with individual ramified
forms, we replace this argument by explicit calculations
involving Hecke operators. Thus we determine the Shimizu lifts
of oldforms and newforms of square-free level, with (possibly
imprimitive) neben-characters.

As a byproduct of these calculations, we obtain explicit
formulas for all relevant $\GL_2$ Whittaker functions. These
play an important role in our second main problem: evaluation
of Garrett/Piatetski-Shapiro--Rallis local zeta integrals in
terms of the standard functorial triple-product $L$-factors.
Our contribution lies in the calculation of archimedean zeta
integrals with various types of ramification.  In all of these
ramified cases, the canonical choices of local data for theta
lifting are apparently useful only for evaluating
\emph{central} values of the associated zeta integrals. This
was observed by Gross--Kudla in the non-archimedean case, and
it appears to be a significant unexplained feature of the
Harris--Kudla method. Finally, we re-prove (by elementary
brute-force computation) the important result of
Piatetski-Shapiro--Rallis on unramified non-archimedean zeta
integrals, in anticipation of future generalizations to the
many ramified non-archimedean cases.

\addcontentsline{toc}{section}{Acknowledgements}

\begin{proof}[\indent Acknowledgements.]
This work is the product of my doctoral research at Princeton
University, directed by Peter Sarnak.  I am very grateful to
him for explaining this fascinating problem to me, and for his
generous help and encouragement, which cannot be overstated.  I
also thank Henryk Iwaniec and Goro Shimura for teaching me
analytic and algebraic number theory, as well as Michael
Boshernitzan and Yakov Sinai for teaching me ergodic theory.  I
gratefully acknowledge receiving funding through an NSF
Graduate Fellowship and a Charlotte E. Proctor Honorific
Fellowship. Finally, I thank my friends and family for their
love and support.
\renewcommand{\qed}{}
\end{proof}

\clearpage \tableofcontents \clearpage \pagenumbering{arabic}

\chapter{Automorphic Forms}

\section{Orders and Units of Quaternion Algebras}

Let $B$ be a quaternion algebra over $\Q$. Then
$B_v=B\otimes_\Q \Q_v$ is a quaternion algebra over $\Q_v$, for
each place $v$ of $\Q$. $B$ is called ramified at $v$ if $B_v$
is a division algebra. We will always assume that $B$ is
unramified at $\infty$, i.e.\ $B$ is indefinite. Then the
reduced discriminant $d_B$ of $B$ over $\Q$ equals the product
of the finite even number of finite places $p$ where $B$ is
ramified. The $2\times2$ matrix algebra $M=\M_2$ over $\Q$ is
totally unramified, and hence $d_M=1$.  We denote by $\diva_v$
the unique division quaternion algebra over $\Q_v$, while
$M_{v}$ is the unique split (i.e.\ non-division) quaternion
algebra. We will frequently use the coordinate functions
$\alpha_{ij}$ on $\alpha=\left[\begin{mat}{cc}
\alpha_{11}&\alpha_{12}\\\alpha_{21}&\alpha_{22}\end{mat}\right]
\in M_{v},$ and also the notation $[X_{ij}]=\{\alpha\in
M_{v}\,;\,\alpha_{ij}\in X_{ij}\subset\Q_v\}.$

Write the canonical anti-involution $\iota$ of $B$ or $B_v$ as
$\alpha\mapsto\alpha^{\iota}$, noting its compatibility with field
extensions, and define the reduced trace and norm by
$\tau(\alpha)=\alpha+\alpha^{\iota}$ and
$\nu(\alpha)=\alpha\alpha^\iota$. On $M$ and $M_v$, $\iota$ is
given explicitly as
\begin{align*}
\left[\begin{mat}{cc}a&b\\c&d\end{mat}\right]^{\iota}=
\left[\begin{mat}{rr}d&-b\\-c&a\end{mat}\right],
\end{align*}
so $\tau$ and $\nu$ correspond to the usual trace and
determinant.

$\diva_p$ has the valuation ${\rm ord}_p\circ\nu$, and its
valuation ring $\valr_p$ is also the unique maximal order of
$\diva_p$. Choose a uniformizer $\varpi_p$\,; all bilateral
$\valr_p$ ideals in $\diva_p$ are principal and of the form
$\varpi_p^n \valr_p=\valr_p\varpi_p^n$, $n\in\Z$.

Every maximal order of $M_p$ is conjugate to $\M_2(\Z_p)$. We will
be interested in those of the form $\left[\begin{mat}{rr}\Z_p &
p^{-n}\Z_p\\p^n\Z_p &
\Z_p\end{mat}\right]=\left[\begin{mat}{cc}p^n & \\ & 1
\end{mat}\right]^{-1}\M_2(\Z_p)\left[\begin{mat}{cc}p^n & \\ & 1
\end{mat}\right]$ for \hbox{$n\in\Z$}, although the set of
all maximal orders is larger, having the structure of a
$(p+1)$-homogeneous tree for a natural notion of adjacency. It is
a theorem of Hijikata that every Eichler order, an intersection of
two maximal orders, is conjugate to $\left[\begin{mat}{rr}\Z_p &
\Z_p\\p^n\Z_p & \Z_p\end{mat}\right]$ for some $n\ge0$.  All
Eichler orders containing the previous one are of the form
$\left[\begin{mat}{rr}\Z_p & p^{-\dot n}\Z_p\\p^{\ddot n}\Z_p &
\Z_p\end{mat}\right]$ for $0\le\dot n\le\ddot n\le n$.

Orders $\ord$ of $B$ exist and have the property that
$\ord_p:=\ord\otimes_\Z\Z_p$ is a maximal order in $B_p$ for
almost all $p$. Furthermore, for any other order $\ord'$ of
$B$, $\ord_p'=\ord_p$ for almost all $p$. Conversely, given a
choice over all $p$ of local orders $(\tilde\ord_p)$ which
satisfies $\tilde\ord_p=\ord_p$ for a.a.\ $p$, then
\hbox{$\ord':=\{\alpha\in B\,;\, \alpha_p\in\tilde\ord_p\
\forall p\}$} is an order in $B$ with $p$-adic completions
$\ord_p'=\tilde\ord_p$.

The adele ring $B_\A$ is defined as the restricted direct product
of all $B_v$ relative to $\ord_\fin:=\prod_p\ord_p$, for any order
$\ord$ of $B$, and the idele group $B_\A\ti$ as the restricted
direct product of all $B_v\ti$ relative to $\ord_\fin\ti$. $B$ is
contained diagonally in $B_\A$, and we will write $B=B_\Q\subset
B_\A$ to emphasize this inculsion. By the previous paragraph,
orders of $B$ correspond 1-1 with compact open subgroups of
$B_\fin$.

Maximal orders in $B$ are characterized as orders $\ord$ s.t.\
every $\ord_p$ is maximal. Choose a maximal order for each $B$,
denoted $\ord(d_B)$. Throughout the rest of the paper, for $p\nmid
d_B$ we will identify $B_p$ with $M_p$ in such a way that
$\ord_p(d_B)=\M_2(\Z_p)$. Furthermore, in the case $d_B=1$ we
identify $B$ with $M$, and so $\ord(1)=\M_2(\Z)$.

Now for $\dot N,\ddot N\in\Q^+$ relatively prime to $d_B$ and
s.t.\ $N=\dot N\ddot N\in\Z$, we define the order $\ord(d_B,\vec
N)$ of $B$ by
\begin{align*}\ordp(d_B,\vec N) =
\begin{cases}\hspace{22pt}\valr_p & \text{if }p\mid d_B,\\
\left[\begin{mat}{rr} \Z_p & \dot N\Z_p\\ \ddot N\Z_p &
\Z_p\end{mat}\right] & \text{if }p\nmid d_B.
\end{cases}
\end{align*}
Then $\ord(d_B,\dot N,\dot N^{-1})$ is a maximal order, and
\begin{align*}\ord(d_B,\vec N)=
\ord(d_B,\dot N,\dot N^{-1})\cap\ord(d_B,\ddot N^{-1},\ddot N)\end{align*}
is an Eichler order. It has reduced discriminant $d_BN$ and is
called of level $N$. Using strong approximation and Hijikata's
result, it is easy to show that any Eichler order of level $N$ in
$B$ is conjugate to $\ord(d_B,(1,N))$. Note that the only Eichler
orders containing $\ord(d_B,\vec N)$ are the $\ord(d_B,\vec N')$
s.t.\ $\dot N'\mid\dot N$ and $\ddot N'\mid\ddot N$.

Throughout the rest of the paper, for any $X$ possessing a
character $\nu$, we will use the notations
\begin{align*}
X^{(a)}_*&=\{\beta\in X_*\,;\,\nu(\beta)=a\},\\
X^{(A)}_*&=\{\beta\in X_*\,;\,\nu(\beta)\in A\},\\
X^\pm_\infty&=X^{(\R^\pm)}_\infty,\hspace{10pt} X^{[a]}_p
=X^{(a\Z_p\ti)}_p.
\end{align*}

\section{Automorphic Forms on Quaternion Groups}
\label{autf}

In this section, fix $\ord=\ord(d_B,\vec N)$. Strong approximation
for $\ordx_\fin$ is proved in \cite{mi1}: $B\ti_\A=B\ti_\Q
B_\infty^+\ordx_\fin$ since $B$ is indefinite and
$\nu(\ordx_\fin)=\Z_\fin\ti$. Because of this, and since
$\ord^{(1)}=B\ti_\Q\cap B_\infty^+\ordx_\fin$, there are
homeomorphisms
\begin{align*}
B\ti_\Q\lmod B\ti_\A/\ordx_\fin&\simeq \ord^{(1)}\lmod
B_\infty^+,\\ B\ti_\Q\R^+ \lmod B\ti_\A/\ordx_\fin&\simeq
\ord^{(1)}\lmod\PGL^+_2(\R).
\end{align*}

Given a primitive Dirichlet character $\chi$ mod $N^\chi$, factor
$\chi=\prod_p\chi_p$ and extend its domain to $\Z_\fin\ti$ by
means of the Chinese remainder theorem,
\begin{align*}
(\Z/N^\chi\Z)\ti&\simeq\textstyle\prod_{p\mid N^\chi}
(\Z_p/N^\chi\Z_p)\ti\\
&=\textstyle\prod_{p\mid N^\chi}\Z_p\ti/(1+N^\chi\Z_p).
\end{align*}
Using strong approximation, $\A\ti=\Q\ti\R^+\Z_\fin\ti$\,, we
define the grossen-character $\tilde\chi$ by
$\tilde\chi(qrz)=\ov{\chi(z)}$. Note that
$\tilde\chi_\infty(-1)= \chi(-1)$ and
$\tilde\chi_p(p\Z_p\ti)=\chi(p)$ if $p\nmid N^\chi$.

Since $\ord$ has level $N$, the map
$r_p:\ordp\rightarrow\Z_p/N\Z_p$ defined by
$r_p(\beta)=[\beta_{11}]$ for $p\nmid d_B$\,, is a ring
homomorphism, and hence so is $r:\ord_\fin\rightarrow\Z/N\Z$,
$r(\beta)=(r_p(\beta_p))_{p\mid N}$. This permits us, for
$N^\chi\mid N$, to define $\tilde\chi$ on $\ord_\fin\ti$ and
$\chi$ on $\ordx$ by composition with $r$.\linebreak (Note
$\chi(\beta)=\ov{\tilde\chi(\beta)}$ for $\beta\in\ordx$.) Since
$\tilde\chi$ is well-defined on
$\Q_\fin\ti\cap\ord_\fin\ti=\Z_\fin\ti$, it extends to a character
of $\Q_\fin\ti\ord_\fin\ti$.

As usual, $B_\infty\ti=\GL_2(\R)$ acts on $\C\setminus\R$ by
linear fractional transformations,
\begin{align*}
z\mapsto\beta|z=\frac{az+b}{cz+d}\
\hspace{20pt}\text{for \,}\beta=\left[\begin{mat}{cc}a&b\\
c&d\end{mat}\right],
\end{align*}
preserving the metric $ds^2=(dx^2+dy^2)/y^2$. Furthermore,
$B_\infty^+$ acts on \hbox{weight-$k$} Maass forms ($k\in\Z$)
on $\h=\R+\ic\,\R^+$ by $\psi\mapsto\psi|_\beta^k$,
\begin{align*}\psi|_\beta^k(z)=\big(\tfrac{cz+d}{|cz+d|}\big)^{-k}\,\psi(\beta|z).\end{align*}
Consider the space $A_k(d_B,\vec N,\chi)$ of smooth weight-$k$
Maass forms $\psi$ on $\h$ which have moderate growth and satisfy
the automorphy condition
\begin{align*}
\psi|_{\gamma_\infty}^{k}=\chi(\gamma)\,\psi\hspace{20pt}\text{for \,}
\gamma\in\ord^{(1)}(d_B,\vec N).
\end{align*}
Since $-1\in\ord^{(1)}$, such $\psi$ must vanish identically
unless $\chi(-1)=(-1)^k$, so we assume this from now on. Let
$C_k(d_B,\vec N,\chi)\subset A_k(d_B,\vec N,\chi)$ denote the
cuspidal subspace, equipped with the Petersson inner-product
\begin{align*}
\langle\psi_1,\psi_2\rangle:=\int_{\ord^{(1)}\lmod\h}
\psi_1(z)\,\ov{\psi_2(z)}\,\dvh{x}{y}.
\end{align*}

In the adelic setting, we define $L^2(B\ti_\Q\lmod
B\ti_\A,\tilde\chi)$ as usual to consist of automorphic
functions $\Psi$ with central character $\tilde\chi$,
\begin{align*}
\Psi(\gamma z\beta)=\tilde\chi(z)\,\Psi(\beta)\hspace{20pt}\text{for \,}
\gamma\in B\ti_\Q\,,\ z\in\A\ti,
\end{align*}
and finite norm under the inner-product
\begin{align*}
\langle\Psi_1,\Psi_2\rangle:=\tfrac12 \int_{PB\ti_\Q\lmod
PB\ti_\A}\Psi_1(\beta)\,\ov{\Psi_2(\beta)}\,d\ti\!\beta.
\end{align*}
The Tamagawa measure $d\ti\!\beta$ is defined in \S\ref{tmeas}.
Note that these two inner-products are normalized differently.
We denote the right regular action of $\alpha\in B\ti_\A$ as
\begin{align*}
\alpha|\Psi(\beta)=\Psi(\beta\alpha)\hspace{20pt}\text{for \,}
\Psi\in L^2(B\ti_\Q\lmod B\ti_\A,\tilde\chi),
\end{align*}
and define $L^2_0(B\ti_\Q\lmod B\ti_\A,\tilde\chi)$ as the
closed invariant subspace of cuspidal $\Psi$.

Now let
\begin{align*}
K_v(d_B,\vec N)=
\begin{cases}
\ordpx(d_B,\vec N)\hspace{10pt}&\text{if }v=p,\\
\SO(2,\R)&\text{if }v=\infty,\end{cases}
\end{align*}
and extend $\tilde\chi_\infty$ (dependent on $k$) to
$\R^+K_\infty$ from $\R\ti$ by setting
\begin{align*}
\tilde\chi_\infty(r\kappa_\theta)=e^{ik\theta}\hspace{20pt}\text{for \,}
\kappa_\theta=\left[\begin{mat}{rr}\cos\theta&\sin\theta\\
-\sin\theta&\cos\theta\end{mat}\right].
\end{align*}
We define $\tilde C_{k}(d_B,\vec N,\tilde\chi)$ to consist of
all smooth \hbox{$\Psi\in L^2_0(B\ti_\Q\lmod
B\ti_\A,\tilde\chi)$} satisfying
\begin{align*}
\kappa_v|\Psi=\tilde\chi_v(\kappa_v)\,\Psi\hspace{20pt}\text{for \,}
\kappa_v\in K_v(d_B,\vec N).
\end{align*}
(Note this is consistent with the inclusion of the center
$\A\ti\subset B\ti_\Q\R^+\ordx_\fin$.) Then we have an
isomorphism $C_{k}(d_B,\vec N,\chi)
\stackrel\sim\longrightarrow \tilde C_{k}(d_B,\vec
N,\tilde\chi)$,  given by
\begin{align*}
\Psi(\gamma\beta_\infty^+\kappa_\fin)
=\tilde\chi(\kappa_\fin)\,\psi|_{\beta_\infty^+}^{k}\!(\ic).
\end{align*}

If $\ord(d_B,\vec N')\supset\ord(d_B,\vec N)$ and $N^\chi\mid
N'$, there is an inclusion \begin{align*}C_k(d_B,\vec
N',\chi)\hookrightarrow C_k(d_B,\vec N,\chi).\end{align*} Any
form in the image of such a map with $pN'\mid N$ is called
$p$-old, as are linear combinations of such forms, and any form
orthogonal to all $p$-old forms is called $p$-new. Likewise at
the archimedean place, we have raising and lowering operators,
\begin{gather*}
R_k^\pm:C_k(d_B,\vec N,\chi)\longrightarrow C_{k\pm2}
(d_B,\vec N,\chi),\\[5pt]
R_k^+=(z-\ov{z})\ddx{z}+\tfrac{k}2\,,\hspace{20pt}
R_k^-=(\ov{z}-z)\ddx{\ov{z}}-\tfrac{k}2\,.
\end{gather*}
A weight-$k$ form $\psi$ is called $\infty$-old if
$\psi=R_{k\mp2}^\pm\,\psi'$ for $\pm k\ge0$, and $\infty$-new
if it is orthogonal to all $\infty$-old forms.  Since $R_{k\mp
2}^\pm$ and $R_k^\mp$ are adjoints, $\psi$ is $\infty$-new iff
$R_k^\mp\,\psi=0$ for $\pm k\ge0$, and this is equivalent to
$y^{-\frac{|k|}2}\,\psi(z)$ being a holomorphic/
anti-holomorphic function of $z$. All forms with $k=0$ are
$\infty$-old.

For the rest of this paper we assume that $N$ is square-free, and
factoring it into $N=N^\flat N^\sharp N^\chi$, we define
$S_k(d_B,\vec N,\chi,N^\flat)\subset C_k(d_B,\vec N,\chi)$ as the
subspace of forms which are $p$-old for $p\mid N^\flat$, $p$-new
for $p\mid N^\sharp$, and $\infty$-new if $|k|>0$. We will also
assume that $|k|\neq1$.

\section{Hecke Operators}
\label{hops}

In this section, fix $\ord=\ord(d_B,\vec N)$ and $\chi$ primitive
with conductor $N^\chi\mid N$, and assume that $N=N^\flat N^\sharp
N^\chi$ is square-free.  For $p\mid N^\flat N^\sharp$, set
\begin{align*}
\dot\ord_p=\ord_p(d_B,(\dot N,\tfrac{\ddot N}p)),\hspace{30pt}
\ddot\ord_p=\ord_p(d_B,(\tfrac{\dot N}p,\ddot N)).
\end{align*}

\subsection{Double Cosets}
\label{cosets}

For $\alpha\in B_p\ti$, define the double coset
$T_p(\alpha)=\ordpx\,\alpha\,\ordpx$. If $p\mid d_B$, every
$T_p(\alpha)$ is equal to some
$T_p(\varpi_p^n)=\varpi_p^n\valr_p\ti=\valr_p\ti\varpi_p^n$ for
$n\in\Z$, and $T_p(\varpi^n)\subset\ord_p$ iff $n\ge0$. Now for
$p\nmid d_B$ and $\vec a\in(\Q_p\ti)^2$, define
\begin{align*}
T_p(\vec a) =T_p\left(\left[\begin{mat}{cc}\dot a & \\ & \ddot
a\end{mat}\right]\right),\hspace{30pt} R_p(\vec a)
=T_p\left(\left[\begin{mat}{cc} & \dot a \\ \ddot a &
\end{mat}\right]\right).
\end{align*}
Since all $a\in\Q_p\ti$ and $\alpha_{\vec
N}=\left[\begin{mat}{cc} &\dot N\\\ddot N&\end{mat}\right]$
normalize $\ordpx$,
\begin{align*}
T_p(a)=a\ordpx=\ordpx a,\hspace{30pt} R_p(\vec N)= \alpha_{\vec
N}\ordpx=\ordpx\alpha_{\vec N}.
\end{align*}

If $p\nmid d_BN$, every $T_p(\alpha)$ is equal to $T_p(p^{\vec
n})$ for some $\dot n\ge \ddot n$ (set $n=\dot n-\ddot n$), and
$T_p(p^{\vec n})\subset\ord_p$ iff $\ddot n\ge0$. Given $\alpha\in
B_p\ti$, the parameters $\dot n,\ddot n$ are determined by
\begin{align*}
\alpha\in p^{\ddot n}(\ordp\setminus p\ordp),\hspace{40pt}
\nu(\alpha)\in p^{\dot n+\ddot n}\,\Zpx.
\end{align*}
Furthermore, we have the following left coset partition: (For
right cosets, apply $\iota$\,; for lower triangular
representatives, conjugate by $R_p(\vec N)$.)
\begin{align*}
T_p(p^{\vec n})&=\bigcup_{m=0}^{n}\bigcup_{
\stackrel{\scriptstyle x\ {\rm mod}\,p^m} {\scriptstyle p\nmid
x\text{ if }0<m<n}} \left[\begin{mat}{rr}p^{m+\ddot n} & \dot
Nxp^{\ddot n}\\ & p^{n-m+\ddot n}\end{mat}\right]\ordpx.
\end{align*}

We prove both well-known assertions of the previous paragraph at
the same time. Note that $T_p(p^{\vec n})$ contains the above
union of left cosets, and they are disjoint since
\begin{align*}
&\left[\begin{mat}{rr}p^{m+\ddot n}&
\dot Nxp^{\ddot n}\\& p^{n-m+\ddot n}
\end{mat}\right]^{-1} \left[\begin{mat}{rr}p^{m'+\ddot n}&
\dot Nx'p^{\ddot n}\\ & p^{n-m'+\ddot n}
\end{mat}\right]\\&\hspace{50pt}=\left[\begin{mat}{rr}p^{m'-m}&
\dot N(x'p^{-m}-x\hspace{1pt}p^{-m'})\\ &
p^{m-m'}\end{mat}\right]\notin\ordpx
\end{align*}
unless $m=m'$, $x\equiv x'\mod p^{m}$.  Now it suffices to show
that the union of these left cosets over all $\dot n\ge \ddot
n\ge0$ has additive volume (see \S\ref{tmeas}) equal to that of
$\ordp\cap B_p\ti$, since for any $\alpha\in\ordp\cap B_p\ti$,
$\vol(\alpha\ordpx)/\vol(\ordpx)=|\alpha|_{B_p}=
|\nu(\alpha)|_p^2>0.$ Finally,
\begin{align*}
\ord_p\setminus\ordpx&=\left[\begin{mat}{rr}\Zpx & \dot N\Zpx\\
\ddot N\Zpx& \Zpx\end{mat}\right]^{(p\Z_p)}
\cup\left[\begin{mat}{rr}p\Zp & \dot Np\Zp\\ \ddot N\Zpx &
\Zpx\end{mat}\right]\cup\ldots\cup\\
&\hspace{30pt}\cup\left[\begin{mat}{rr}p\Zp & \dot Np\Zp
\\ \ddot Np\Zp& \Zpx\end{mat}\right]\cup\ldots\cup\left[\begin{mat}{rr}p\Zp
& \dot Np\Zp\\ \ddot Np\Zp & p\Zp\end{mat}\right],
\hspace{20pt}\text{so}\\[12pt]
\vol(\ordpx)&=1-\left(p^{-1}\left(\tfrac{p-1}p\right)^3+
4p^{-2}\left(\tfrac{p-1}p\right)^2+
4p^{-3}\left(\tfrac{p-1}p\right)^1+p^{-4}\right)\\
&=\left(1-p^{-1}\right)\left(1-p^{-2}\right),
\hspace{20pt}\text{and}\\[12pt]
\zeta_p(1)\zeta_p(2)&=\vol(\ordp)/\vol(\ordpx)  \ge
\vol(\ordp\cap B_p\ti)/\vol(\ordpx)\\&\ge\sum_{\dot n\ge \ddot
n\ge0} \left(1+(p-1)\left(1+p+\cdots+p^{\dot n-\ddot
n-2}\right)+p^{\dot n-\ddot n}\right) \left(p^{-2\dot n-2\ddot
n}\right)\\ &=\left(1+(p-1)\sum_{n>0}p^{-n-1}\right)\sum_{\ddot
n\ge0} p^{-4\ddot n}\\&=
\left(\frac{1+p^{-2}}{1-p^{-1}}\right)\frac{1}{\left(1-p^{-4}\right)}
 = \zeta_p(1)\zeta_p(2).
\end{align*}

If $p\mid N$, every $T_p(\alpha)$ (with $\alpha\in B_p\ti$) is
equal to either $T_p(p^{\vec n})$ or $R_p(p^{\vec n}\vec N)$
for some $\dot n,\ddot n\in\Z$, and $T_p(\alpha)\subset\ordp$
iff $\dot n,\ddot n\ge0$. We distinguish among these as
follows: First, $m=\min\{\dot n,\ddot n\}$ satisfies $\alpha\in
p^{m}(\ordp\setminus p\ordp)$.  Now determine the type of
double coset and sign of $n=\dot n-\ddot n$ using the partition
\begin{align*}
\ordp\setminus p\ordp&=
\stackrel{T,+}{\left[\begin{mat}{rr}p\Zp & \dot N\Zp\\\ddot N\Zp &
\Zpx\end{mat}\right]}\cup \stackrel{T,0}{\left[\begin{mat}{rr}\Zpx
& \dot N\Zp\\\ddot N\Zp & \Zpx\end{mat}\right]}
\cup\stackrel{T,-}{\left[\begin{mat}{rr}\Zpx & \dot N\Zp\\\ddot
N\Zp & p\Zp\end{mat}\right]}\cup\\
&\hspace{30pt}\cup\stackrel{R,+}{\left[\begin{mat}{rr}p\Zp & \dot
Np\Zp\\\ddot N\Zpx & p\Zp\end{mat}\right]}\cup
\stackrel{R,0}{\left[\begin{mat}{rr}p\Zp & \dot N\Zpx\\\ddot N\Zpx
& p\Zp\end{mat}\right]}\cup
\stackrel{R,-}{\left[\begin{mat}{rr}p\Zp & \dot N\Zpx\\\ddot Np\Zp
& p\Zp\end{mat}\right]}.
\end{align*}
The remaining parameter $\max\{\dot n,\ddot n\}$ is recovered from
\begin{align*}
\nu(\alpha)\in\left\{
\begin{aligned}
p^{\dot n+\ddot n}\,\Z_p\ti &\hspace{20pt}
\text{if }\alpha\in T_p(p^{\vec n}),\\
p^{1+\dot n+\ddot n}\,\Z_p\ti &\hspace{20pt}
\text{if }\alpha\in R_p(p^{\vec n}\vec N).
\end{aligned}\right.
\end{align*}
These $T_p(\alpha)$, taking $\alpha$ diagonal or anti-diagonal,
have left coset representatives as follows: (For right cosets,
apply $\iota$ and conjugate by $R_p(\vec N)$.)
\begin{align*}
T_p(\alpha)=\bigcup_{\scriptstyle x\ {\rm mod}\,p^{|n|}}
\alpha_x\ordpx,\hspace{30pt}\alpha_x=\begin{cases}
\left[\begin{mat}{lr}1&\dot Nx\\&1\end{mat}\right]\alpha &
\text{if }n\ge0,\vspace{5pt}\\
\left[\begin{mat}{lr}1&\\\ddot Nx&1\end{mat}\right]\alpha &
\text{if }n\le0.\end{cases}
\end{align*}

As in the case $p\nmid d_BN$, we prove all of these claims
together by showing that the left cosets above (with $\dot n,\ddot
n\ge0$) completely partition $\ordp\cap B_p\ti$. The double cosets
listed above are disjoint for unequal types/parameters by the
preceding characterization, and the left cosets $\alpha_x\ordpx$
listed within the same double coset are disjoint, since
$\alpha_{x}^{-1}\,\alpha_{x'}\notin\ordpx$ unless $x\equiv x'\mod
p^{|n|}$.

Finally, \hbox{$\ordpx=\left[\begin{mat}{rr}\Zpx & \dot N\Z_p
\\ \ddot N\Zp & \Zpx
\end{mat}\right]$} has $\vol(\ordpx)=p^{-1}(1-p^{-1})^2$, and
\begin{align*}
\zeta_p(1)^2&=\vol(\ord_p)/\vol(\ordpx)  \ge
\vol(\ord_p\cap B_p\ti)/\vol(\ordpx)\\ &\ge\sum_{\ddot
n\ge0}p^{-4\ddot n}+2\sum_{\dot n>\ddot n\ge0}p^{\dot n-\ddot
n-2\dot n-2\ddot n}\\ &\hspace{30pt}+\sum_{\ddot n\ge0}p^{-4\ddot
n-2}+2\sum_{\dot n>\ddot n\ge0}p^{\dot n-\ddot n-2\dot n-2\ddot
n-2}\\ &=\left(1+p^{-2}\right)\left(1+2\sum_{n>0}p^{-n}\right)
\sum_{\ddot n\ge0}p^{-4\ddot n}\\
&=\left(1+p^{-2}\right)\left(\frac{1+p^{-1}}{1-p^{-1}}\right)
\frac1{\left(1-p^{-4}\right)}=\zeta_p(1)^2.
\end{align*}

In the subcase $p\mid N^\flat N^\sharp$, we write double cosets of
$\dot\ord_p\ti$,\,$\ddot\ord_p\ti$ as $\dot T_p(\alpha)$,\,$\ddot
T_p(\alpha)$.  It is easy to check by the disjointness/volume
argument that
\begin{align*}
\dot T_p(1)&=T_p(1)\cup R_p(\dot N,\tfrac{\ddot N}p),\\
\ddot T_p(1)&=T_p(1)\cup R_p(\tfrac{\dot N}p,\ddot N).
\end{align*}

\subsection{Non-Archimedean Operators}
\label{hring}

The space $\Sz(B_p\ti)$ of $\C$-valued Schwarz functions on
$B_p\ti$ forms an algebra under convolution (using the measure
$\dpt\beta$ defined in \S\ref{tmeas}),
\begin{align*}
(\phi_p*\phi_p')(\alpha):=\int_{B_p\ti}\phi_p(\alpha\beta^{-1})\,
\phi_p'(\beta)\,\dpt\beta,
\end{align*}
and it acts on $L^2_0(B_\Q\ti\lmod B_\A\ti,\tilde\chi)$ via the
right regular representation,
\begin{align*}
\phi_p|\Psi:=\int_{B_p\ti}
\beta|\Psi\,\phi_p(\beta)\,\dpt\beta.
\end{align*}
Now we will define and study the subquotient algebras
$\Hal^\star_p=\Hal_p,\,\Hal_p^\flat,\,\Hal_p^\sharp,\,\Hal_p^\chi$,
in the respective cases $p\nmid N,\,p\mid N^\flat,\, p\mid
N^\sharp,\, p\mid N^\chi$, acting on $\tilde S_k(d_B,\vec
N,\tilde\chi,N^\flat)$.

For any $p$, define $\Hal_p$ to be the subalgebra of $\Sz(B_p\ti)$
generated by the functions
\begin{align*}
\phi_{T_p(\alpha)}:=\tfrac1{\volx(\ordpx)}\,
\ind_{T_p(\alpha)}\hspace{20pt}\text{with \,}\alpha\in B_p\ti.
\end{align*}
We abuse notation by writing $T_p(\alpha)$ in place of
$\phi_{T_p(\alpha)}$. For $a\in\Q_p\ti$, let
\begin{align*}
T_p^{[a]}:=\textstyle\sum_{\alpha\in\ordpx\lmod
\ordp^{[a]}/\ordpx}T_p(\alpha)\in\Hal_p.
\end{align*}

If $p\nmid N^\chi$, $\Hal_p$ acts on $\tilde S_k(d_B,\vec
N,\tilde\chi,N^\flat)$ non-trivially. The element
\begin{align*}
T_p(\alpha)^\vee:=T_p(\alpha^{-1})\in\Hal_p
\end{align*}
acts as the adjoint of $T_p(\alpha)$, and $T_p(p)$ acts as the
scalar $\chi(p)$.

If $p\mid d_B$ then $\Hal_p\simeq\C\big(T_p(\varpi_p)\big)$ and
$T_p^{[p^n]}=\begin{cases}T_p(\varpi_p)^n& \text{if }n\ge0,\\
\quad 0 & \text{if }n<0.
\end{cases}$

If $p\nmid d_BN$, it is well known (see \cite{gs}) that
$\Hal_p\simeq\C\big[T_p(p,1),T_p(p),T_p(p)^{-1}\big]$,
\begin{align*}
T_p^{[p^n]}&=\sum_{\stackrel{\scriptstyle \dot n+\ddot
n=n}{\scriptstyle \dot n\ge \ddot n\ge\hspace{1pt}0}}T_p(p^{\vec
n}),\\ \Hal_p[[X]] \ni \sum_{n=0}^\infty
T_p^{[p^n]}X^n&=\left(T_p(1)-T_p(p,1)X+p\,T_p(p)X^2\right)^{-1}.
\end{align*}

For $p\mid N$, we begin by computing the multiplication rules in
$\Hal_p$\,:
\begin{gather*}
T_p(\vec a)^n = T_p({\vec a}^{\,n})\text{ \,for \,}n\ge0,
\hspace{20pt}R_p(\vec N)^2 = T_p(p),\\[8pt]
\begin{alignedat}{2}
T_p(\vec a)\,T_p(p)&=T_p(\vec ap)&&=T_p(p)\,T_p(\vec a),\\
T_p(\vec a)\,R_p(\vec N)&=R_p(\vec a\vec N)&&= R_p(\vec
N)\,T_p(\ddot a,\dot a),
\end{alignedat}\displaybreak[0]\\[8pt]
\begin{aligned}
T_p(p,1)\,T_p(1,p)&=p\,T_p(p)+(p-1)\,T_p(p,1)\,R_p(\vec N),\\
T_p(1,p)\,T_p(p,1)&=p\,T_p(p)+(p-1)\,T_p(1,p)\,R_p(\vec N).
\end{aligned}
\end{gather*}
Also,
\begin{align*}
T_p^{[p^n]}=\sum_{\stackrel{\scriptstyle
\dot n+\ddot n=n}{\scriptstyle \dot n,\ddot n\ge0}}T_p(p^{\vec
n})+\sum_{\stackrel{\scriptstyle 1+\dot n+\ddot n=n}{\scriptstyle
\dot n,\ddot n\ge0}}R_p(p^{\vec n}\vec N).
\end{align*}

Now define $\Hal^\flat_p$ as the centralizer of $R_p(\vec N)$ in
$\Hal_p$. Shimizu proved in \cite{sz2}\linebreak (see also
\cite{pi}) that $\Hal^\flat_p$ is a maximal commutative subalgebra
of $\Hal_p$, that it contains all $T_p^{[p^n]}$, and that
\begin{align*}
\sum_{n=0}^\infty T_p^{[p^n]}X^n =
\frac{T_p(1)+p\,R_p(\vec N)X} {T_p(1)-T_p^{[p]}X+p\,R_p(\vec
N)X+p\,T_p(p)X^2}.
\end{align*}

To define $\Hal^\sharp_p$, we first note that
\begin{align*}
\dot T_p(1)&=\tfrac1{p+1}\big( T_p(1)+R_p(\dot N,\tfrac{\ddot
N}p)\big),\\
\ddot T_p(1)&=\tfrac1{p+1}\big( T_p(1)+ R_p(\tfrac{\dot
N}p,\ddot N)\big),
\end{align*}
and compute the relations
\begin{alignat*}{2}
T_p(1,p)\,\dot T_p(1)&=\tfrac{p}{p+1}\big(T_p(1,p)+R_p(\vec
N)\big)&&=\dot T_p(1)\,T_p(1,p),\vspace{5pt}\\ T_p(p,1)\,\ddot
T_p(1)&=\tfrac{p}{p+1}\big(T_p(p,1)+R_p(\vec N)\big)&&=\ddot
T_p(1)\,T_p(p,1).
\end{alignat*}
If $p\mid N^\sharp$, the elements $\dot T_p(1)$, $\ddot T_p(1)$
of $\Hal_p$ annihilate $\tilde S_k(d_B,\vec
N,\tilde\chi,N^\flat)$, while $T_p(1,p)$, $T_p(p,1)$ are
invertible, as we will see in \S\ref{autr}. Such a
representation of $\Hal_p$ factors through the quotient
\begin{align*}
\Hal_p^\sharp:={}&\Hal_p\big/\big(T_p(1,p)+R_p(\vec N)\big)\\
={}&\Hal_p\big/\big(T_p(p,1)+R_p(\vec N)\big)\simeq
\C\big(R_p^\sharp(\vec N)\big),\\
T_p^{[p^n]}\equiv{}&
\begin{cases}
\big({-}R_p(\vec N)\big)^n & \text{if }n\ge0,\\
\hspace{8pt} 0 & \text{if }n<0,
\end{cases}
\hspace{20pt}\text{in }\Hal_p^\sharp.
\end{align*}

Now suppose $p\mid N^\chi$. For $\beta\in B_p\ti,\
\delta=\det\beta,\ m\in\{1,2\},\ m'=3-m$, we define
\begin{align*}
\phi^{\chi\,m,m}_{T_p(p^{(n_1,n_2)})}(\beta)
&=\tfrac1{\volx(\ordpx)}\,\ind_{T_p(p^{(n_1,n_2)})}(\beta)
\begin{cases}
\ov{\tilde\chi_p(\beta_{mm})} & \text{if }n_m\le n_{m'},\\
\ov{\tilde\chi_p(\delta/\beta_{m'm'})} & \text{if }n_m\ge n_{m'}
,\end{cases}.\\
\phi^{\chi\,m,m'}_{T_p(p^{(n_1,n_2)})}(\beta)&=\quad0,\displaybreak[0]\\
\phi^{\chi\,m,m'}_{R_p(p^{(n_1,n_2)}\vec N)}(\beta)
&=\tfrac1{\volx(\ordpx)}\,\ind_{R_p(p^{(n_1,n_2)}\vec
N)}(\beta)
\begin{cases}
\ov{\tilde\chi_p(\beta_{mm'})} & \text{if }n_m\le n_{m'},\\
\ov{\tilde\chi_p(-\delta/\beta_{m'm})} & \text{if }n_m\ge n_{m'},
\end{cases}\\
\phi^{\chi\,m,m}_{R_p(p^{(n_1,n_2)}\vec N)}(\beta)&=\quad0.
\end{align*}
These cases were treated somewhat differently by Miyake in
\cite{mi1,mi2}. It is straightforward to check, using the
partition in \S\ref{cosets}, that
\begin{align*}
\phi^{\chi\,\vec m}_{T_p(\alpha)}(\dot\kappa\beta\ddot\kappa)
= \ov{\tilde\chi_p(\dot\kappa_{\dot m\dot m}\ddot\kappa_{\ddot
m\ddot m})}\,\phi^{\chi\,\vec
m}_{T_p(\alpha)}(\beta)\hspace{20pt}\text{for \,}
\dot\kappa,\ddot\kappa\in\ordpx.
\end{align*}
Any function $\phi^\chi\in\Sz(B_p\ti)$ satisfying this
transformation property and supported on a double-coset
$T_p(\alpha)\in\Hal_p$ is determined by it's value
$\phi^\chi(\beta)$ on any $\beta\in T_p(\alpha)$. By taking
$\beta$ to be diagonal or anti-diagonal and considering the
left and right actions of diagonal $\kappa\in\ordpx$, we see
that in the diagonal case, $\dot m\neq\ddot m\ \Rightarrow\
\phi^\chi=0$, and in the anti-diagonal case, $\dot m=\ddot m\
\Rightarrow\ \phi^\chi=0$. Thus, any such $\phi^\chi$ is
proportional to the corresponding $\phi^{\chi\,\vec
m}_{T_p(\alpha)}$ listed above. These functions are normalized
so that their values are multiplicative,
\begin{align*}
\phi^{\chi\,\vec m_1}_{T_p(\alpha_1)}(\alpha_1)\cdot
\phi^{\chi\,\vec m_2}_{T_p(\alpha_2)}(\alpha_2) =
\phi^{\chi\,\dot m_1,\ddot m_2}_{T_p(\alpha_1\alpha_2)}(\alpha_1\alpha_2),
\end{align*}
for diagonal or anti-diagonal $\alpha_j\in B_p\ti$, whenever
neither side vanishes. Now it follows from convolving supports
in $\Hal_p$ that the $\phi^\chi$ convolve as
\begin{gather*}
\big(\phi^{\chi\,m,m}_{T_p(\vec a)}\big)^{*n}
=\phi^{\chi\,m,m}_{T_p(\vec a^{\,n})} \ \,\text{for}\ \,
n\ge0,\hspace{20pt} \phi^{\chi\,m,m'}_{R_p(\vec N)}*
\phi^{\chi\,m',m}_{R_p(\vec N)}=\phi^{\chi\,m,m}_{T_p(p)},\\[10pt]
\begin{alignedat}{2}
\phi^{\chi\,m,m}_{T_p(\vec a)}*\phi^{\chi\,m,m}_{T_p(p)}
&=\phi^{\chi\,m,m}_{T_p(\vec ap)}&
&=\phi^{\chi\,m,m}_{T_p(p)}*\phi^{\chi\,m,m}_{T_p(\vec a)},\\
\phi^{\chi\,m,m}_{T_p(\vec a)}*\phi^{\chi\,m,m'}_{R_p(\vec N)}
&=\phi^{\chi\,m,m'}_{R_p(\vec a\vec N)}&
&=\phi^{\chi\,m,m'}_{R_p(\vec N)}*
\phi^{\chi\,m',m'}_{T_p(\ddot a,\dot a)},\\
\phi^{\chi\,m,m}_{T_p(p,1)}*\phi^{\chi\,m,m}_{T_p(1,p)}
&=p\,\phi^{\chi\,m,m}_{T_p(p)}& &=
\phi^{\chi\,m,m}_{T_p(1,p)}*\phi^{\chi\,m,m}_{T_p(p,1)}\,.
\end{alignedat}
\end{gather*}
Define $\Hal_p^{\chi}$ to be the $\C$-algebra of finite linear
combinations of the functions
\begin{gather*}
T_p^{\chi}(\vec a) := \phi^{\chi\,1,1}_{T_p(\vec
a)}+\phi^{\chi\,2,2}_{T_p(\vec a)}\, \hspace{20pt}\text{for
\,}\vec a\in(\Q_p\ti)^2,\\[10pt]
\text{so \,}\Hal_p^\chi\simeq
\C\big(T_p^\chi(p,1),T_p^\chi(1,p)\big).
\end{gather*}
Also define
\begin{align*}
R_p^{\chi}(\vec a\vec N) =  \phi_{R_p(\vec
a\vec N)}^{\chi\,1,2}+ \phi_{R_p(\vec a\vec N)}^{\chi\,2,1}.
\end{align*}
All of the functions $\phi^\chi$ satisfy
$\phi^\chi(z\beta)=\ov{\tilde\chi_p(z)}\,\phi^\chi(\beta)$ for
\hbox{$z\in\Z_p\ti$}, and hence act non-trivially on
$L^2_0(B_\Q\ti\lmod B_\A\ti,\tilde\chi)$.

Given $\eta^\chi\in\prod_{p\mid N^\chi}(\Z/2\Z)$, define
\begin{align*}
\Psi^{\eta^\chi}&=\textstyle{\big(\prod_{p\mid N^\chi}R_p^\chi(\vec
N)^{\eta^\chi_p}\big)}\Psi\hspace{20pt}\text{and}\\
\tilde C_k^{\eta^\chi}(d_B,\vec N,\tilde\chi) &=
\textstyle{\big(\prod_{p\mid N^\chi}R_p^\chi(\vec N)^{\eta^\chi_p}\big)}
\tilde C_k(d_B,\vec N,\tilde\chi),
\end{align*}
which consists of $\Psi\in L^2_0(B_\Q\ti\lmod
B_\A\ti,\tilde\chi)$ satisfying
\begin{align*}
\kappa|\Psi=\begin{cases}
\tilde\chi_p(\kappa_{11})\,\Psi & \text{if }\eta^\chi_p=0,\\
\tilde\chi_p(\kappa_{22})\,\Psi & \text{if }\eta^\chi_p=1,
\end{cases}\hspace{20pt}\text{for \,}
\kappa\in\ordpx,\ p\mid N^\chi,
\end{align*}
plus the usual $K_v$ condition for $v\nmid N^\chi$. The action
of $T_p^{\chi}(\vec a)\in\Hal_p^\chi$ stabilizes each $\tilde
C_k^{\eta^\chi}(d_B,\vec N,\tilde\chi)$ and is adjoint to the
action of $T_p^{\chi}(\vec a)^\vee:=T_p^{\chi}(\vec a^{-1})$,
while $T_p^{\chi}(p)$ acts as the identity.

\subsection{Archimedean Operators}

Let $\Hal_\infty^\star$ denote the semigroup generated by
$T_\infty^\Delta$ alone,
\begin{align*}
T_\infty^\Delta\,\Psi&= \left(-y^2\left(\ddp{x}2+\ddp{y}2\right)-
y\tfrac{\partial^2}{\partial
x\partial\theta}\right)\sigma_z\kappa_\theta|\Psi,\\
T_\infty^{-}\,\Psi&= \eps_\infty|\Psi,\hspace{20pt}
\eps_\infty=\left[\begin{mat}{cc}1&\\ &-1\end{mat}\right]\in
B_\infty\ti.
\end{align*}
It is easy to check that these commute and
\begin{align*}
T_\infty^\Delta&:\tilde C_{k}^{\eta^\chi}(d_B,\vec N,\tilde\chi)
\rightarrow\tilde C_{k}^{\eta^\chi}(d_B,\vec N,\tilde\chi),\\
T_\infty^{-}&:\tilde C_{k}^{\eta^\chi}(d_B,\vec
N,\tilde\chi)\rightarrow\tilde C_{-k}^{\eta^\chi}(d_B,\vec
N,\tilde\chi).
\end{align*}
For $\eta\in\prod_v(\Z/2\Z)$, $\eta^\chi=(\eta_p)_{p\mid N^\chi}$,
define
\begin{align*}
\Psi^\eta&=(T_\infty^-)^{\eta_\infty}\,\Psi^{\eta^\chi},\\
\tilde C_{k}^{\eta}(d_B,\vec N,\tilde\chi)&=\tilde
C_{(-1)^{\eta_\infty}k}^{\eta^\chi}(d_B,\vec N,\tilde\chi).
\end{align*}

Now consider the $\C$-valued coordinates,
\begin{align*}
\begin{aligned}
X(\beta)&=\tfrac12(a+d)+\tfrac{i}2(b-c),\vspace{5pt}\\
Y(\beta)&=\tfrac12(a-d)+\tfrac{i}2(b+c),
\end{aligned}\hspace{30pt}
\beta=\left[\begin{mat}{cc}a&b\\c&d\end{mat}\right]\in B_\infty,
\end{align*}
in which we write the $(2,2)$-form $\nu$ and a
positive-definite majorant $P$,
\begin{alignat*}{4}
\nu(\beta)&\phantom{:}=\tfrac12\tr(\beta\beta^\iota)
=X\ov{X}-Y\ov{Y}={}&ad&-bc,\\
P(\beta)&:=\tfrac12\tr(\beta\beta^t)=X\ov{X}+Y\ov{Y}
={}&\tfrac12(a^2+b^2&+c^2+d^2).
\end{alignat*}
It is straightforward to compute, writing $B_\infty^+\ni
\beta=\kappa_{\dot\theta}
\left[\!\begin{mat}{cc}re^{t/2}\!\!&\\&re^{-t/2}
\end{mat}\!\!\right] \kappa_{\ddot\theta}$\,, that
\begin{alignat*}{2}
X(\beta)&=e^{i(+\dot\theta+\ddot\theta)}\,r\cosh\tfrac
t2,&\hspace{30pt} \nu(\beta)&=r^2,\\
Y(\beta)&=e^{i(-\dot\theta+\ddot\theta)}\,r\sinh\tfrac t2,&
P(\beta)&=r^2\cosh t.
\end{alignat*}
Now define $\phi_{X_\infty}^{k,\xi},\phi_{Y_\infty}^{k,\xi}
\in\Sz(\R^+\lmod B_\infty^+)$ by
\begin{align*}
\begin{aligned}
\phi_{X_\infty}^{k,\xi}(\beta)&=
\tfrac1\pi\left(\ov{X}(\beta)/{\nu(\beta)}^{\frac12}\right)^{\und
k}\,\e\left(i\xi P(\beta)/\nu(\beta)\right),\\
&=\tfrac1\pi\,\ov{\tilde\chi(\kappa_{\dot\theta })}
\ov{\tilde\chi(\kappa_{\ddot\theta })}\,(\cosh\tfrac
t2)^{|k|}\,\e(i\xi\cosh t),\\
\phi_{Y_\infty}^{k,\xi}(\beta)&=
\tfrac1\pi\left(\ov{Y}(\beta)/{\nu(\beta)}^{\frac12}\right)^{\und
k}\,\e\left(i\xi P(\beta)/\nu(\beta)\right),\\
&=\tfrac1\pi\,\tilde\chi(\kappa_{\dot\theta })
\ov{\tilde\chi(\kappa_{\ddot\theta })}\, (\sinh\tfrac
t2)^{|k|}\,\e(i\xi\cosh t),
\end{aligned}\hspace{20pt}z^{\und k}=
\begin{cases}
z^k & \text{if }k>0,\\ 1 & \text{if }k=0,\\ \ov{z}^{|k|} & \text{if }k<0.
\end{cases}
\end{align*}
These give rise to Hecke operators again commuting with
$T_\infty^\Delta$,
\begin{align*}
\begin{aligned}
&T_{*_\infty}^{k,\xi}:
C_{k}^{\eta^\chi}(d_B,\vec N,\tilde\chi) \rightarrow
C_{k}^{\eta}(d_B,\vec N,\tilde\chi),\\
&T_{*_\infty}^{k,\xi}\,\Psi=\displaystyle \int_{\R^+\lmod
B_\infty^+} \beta|\Psi\,\phi_{*_\infty}^{k,\xi}(\beta)\,
\dit\beta,
\end{aligned}\hspace{20pt}
\eta_\infty=
\begin{cases}
0 & \text{if }*=X,\\ 1 & \text{if }*=Y.
\end{cases}
\end{align*}

\section{Automorphic Representations}
\label{autr}

Given $\Psi\in\tilde S_{k}(d_B,\vec N,\tilde\chi,N^\flat)$, define
$\pi\subset L^2_0(B_\Q\ti\lmod B_\A\ti,\tilde\chi)$ as the
subspace of $K_\A(d_B)$-finite vectors in the closure of the span
of $\{\beta|\Psi\,;\, \beta\in B_\A\ti\}$. This is the cuspidal
automorphic representation of $B_\A\ti$ attached to $\Psi$. If
$\pi$ is irreducible, then $\pi=\tilde\otimes_v\pi_v$ and
\hbox{$\Psi=\tilde\otimes_v\Psi_v$} factor as restricted tensor
products over all places $v$, relative to a choice of spherical
unit vectors $\Psi_v$ at almost all places.  In this section, we
recall the classification of pre-unitary irreducible admissible
representations of $B_v\ti$ having square-free conductor
\cite{jl,gode,gel}, and explicitly compute the actions of Hecke
operators on these models. As a consequence, we have

\begin{lem}\label{basis} The eigenforms in $\tilde
S_k(d_B,\vec N,\tilde\chi,N^\flat)$ of $\Hal^\star$ comprise an
orthogonal basis, and their tuples of eigenvalues have
multiplicity one.  The corresponding automorphic representations
are each shared by $2^{\#\{p\mid N^\flat\}}$ basis elements, and
these have the same eigenvalues for all $v\nmid N^\flat$.
\end{lem}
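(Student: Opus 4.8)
The plan is to combine the local classification of representations of $B_v\ti$ of square-free conductor with the explicit Hecke actions computed in \S\ref{autr}, and then deduce the global statement via the tensor-product factorization. First I would recall, for each place $v$, the list of pre-unitary irreducible admissible $\pi_v$ having conductor dividing the local level: at $v=\infty$ these are the (limits of) discrete-series/principal-series of a fixed weight and Laplace parameter, at $p\nmid N^\flat N^\sharp$ they are unramified principal series (spherical line), at $p\mid N^\sharp$ they are the unramified-twist Steinberg representations, and at $p\mid N^\chi$ they are ramified principal series $\pi(\mu_1,\mu_2)$ with $\mu_1\mu_2=\tilde\chi_p$. The point is that in each case the subspace of vectors transforming under $K_v(d_B,\vec N)$ by the prescribed character (or, for $p\mid N^\flat$, fixed by $\ordpx$) is \emph{one-dimensional} — this is the local conductor theory of \cite{jl,gel} together with the oldform count at $p\mid N^\flat$, where the space of $\ordpx(d_B,\vec N)$-fixed vectors in an unramified $\pi_p$ is $2$-dimensional. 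Using $\Psi=\tilde\otimes_v\Psi_v$, an $\Hal^\star$-eigenform in $\tilde S_k(d_B,\vec N,\tilde\chi,N^\flat)$ corresponds to a choice of $\pi=\tilde\otimes\pi_v$ together with a vector in this tensor product of local line(s); since only the $p\mid N^\flat$ factors contribute a $2$-dimensional space, the eigenform is determined up to the $2^{\#\{p\mid N^\flat\}}$-dimensional oldform space at those primes.

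Next I would pin down the eigenvalues. For each local Hecke algebra $\Hal_p^\star$ acting on the (one- or two-dimensional) local model, the explicit generators and relations recorded in \S\ref{hring} — the formal-power-series identities for $\sum T_p^{[p^n]}X^n$, the fact that $T_p(p)$ (resp.\ $T_p^\chi(p)$) acts by $\chi(p)$ (resp.\ the identity), the Steinberg relation $T_p(1,p)+R_p(\vec N)\equiv 0$ in $\Hal_p^\sharp$, and the archimedean operators commuting with $T_\infty^\Delta$ — show that the joint eigenvalue tuple of $\Hal_p^\star$ on an irreducible $\pi_p$ is a \emph{complete invariant} of the local representation, for every $p\nmid N^\flat$, and likewise at $\infty$. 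Hence two $\Hal^\star$-eigenforms with the same eigenvalues for all $v\nmid N^\flat$ have isomorphic $\pi_v$ for all such $v$; by strong multiplicity one for $B\ti$ (proved by Jacquet--Langlands via the trace formula, as cited) they generate the same automorphic representation $\pi$, and conversely forms in the same $\pi$ share all these eigenvalues. This is exactly the asserted multiplicity-one and the "$2^{\#\{p\mid N^\flat\}}$ basis elements per representation'' statement.

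Finally, orthogonality: eigenforms with distinct eigenvalue-tuples are orthogonal because each $T_p(\alpha)\in\Hal_p^\star$ (resp.\ $T_p^\chi(\vec a)$) has an adjoint in the same algebra, namely $T_p(\alpha^{-1})$ (resp.\ $T_p^\chi(\vec a^{-1})$), as recorded in \S\ref{hring}, and $T_\infty^\Delta$ is self-adjoint while $T_\infty^-$ is unitary; so $\Hal^\star$ acts by normal operators on the Hilbert space $\tilde S_k(d_B,\vec N,\tilde\chi,N^\flat)$, and eigenspaces for distinct characters of a commuting family of normal operators are mutually orthogonal. Within a single eigenvalue-tuple the space is the $2^{\#\{p\mid N^\flat\}}$-dimensional oldspace at the primes $p\mid N^\flat$, on which one can choose an orthogonal basis by hand (e.g.\ diagonalizing the self-adjoint $U_p$-type operators, or simply applying Gram--Schmidt), yielding an orthogonal basis of the whole space. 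The main obstacle is the second step — verifying that the local eigenvalue tuple really separates all relevant $\pi_v$, in particular handling the ramified principal series at $p\mid N^\chi$ and the $\eta^\chi$-bookkeeping, and checking that the archimedean parameter is recovered from $T_\infty^\Delta$ alone; everything else is formal once the local models and their Hecke actions from \S\ref{autr} are in hand.
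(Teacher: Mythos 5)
Your overall strategy matches the paper's own (very brief) proof: combine multiplicity one for $B_\A^\ti$ with Casselman's theorem for the local conductor/oldform count, and finish with the explicit calculations of \S\ref{autr} at $p\mid N^\flat$. The first two steps of your argument --- the local dimension counts, the observation that the $\Hal_v^\star$-eigenvalue tuple on the local models is a complete invariant for $v\nmid N^\flat$, and the appeal to (strong) multiplicity one to glue --- are exactly what is needed and are correctly reasoned, and your orthogonality-via-normal-commuting-operators observation is fine.

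However, your final paragraph contains a genuine error that contradicts the very statement you are trying to prove. You write that ``within a single eigenvalue-tuple the space is the $2^{\#\{p\mid N^\flat\}}$-dimensional oldspace'' and propose to repair this by Gram--Schmidt. But the lemma asserts the eigenvalue tuples have multiplicity \emph{one}; there is no degenerate $2^{\#\{p\mid N^\flat\}}$-dimensional joint eigenspace for $\Hal^\star$. The resolution --- and this is precisely what the paper means by ``plus the local calculations below for $p\mid N^\flat$'' --- is that for $p\mid N^\flat$ the commutative algebra $\Hal_p^\flat$ already splits the two-dimensional oldspace $\C\,\dot V_p^0\oplus\C\,\ddot V_p^0$: the normal commuting operators $R_p(\vec N)$ and $T_p^{[p]}$ are simultaneously diagonalized by $V_p^\flat=\dot V_p^0+\ov{\veps_p}\,\ddot V_p^0$ with $R_p(\vec N)$-eigenvalue $\veps_p$, and the two square roots $\veps_p$ of $\chi(p)$ are distinct, so the two oldvectors carry \emph{distinct} tuples and are automatically orthogonal. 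Gram--Schmidt applied to $\dot V_p^0,\ddot V_p^0$ would not produce $\Hal_p^\flat$-eigenvectors, and so is simply the wrong tool here; your alternative phrase ``diagonalizing the \ldots operators'' is the right idea, but it must be the argument, not an optional patch to a degenerate eigenspace --- it is exactly this diagonalization that delivers multiplicity one of the tuples and the exact count of $2^{\#\{p\mid N^\flat\}}$ basis elements per representation, distinguished by their $R_p(\vec N)$-eigenvalues at $p\mid N^\flat$. With that paragraph corrected the proof is complete.
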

{\bf Proof} \hspace{4pt}This follows from multiplicity-one for
automorphic representations of $B_\A\ti$ (proved by
Jacquet--Langlands using the trace-formula) and Casselman's
theorem, plus the local calculations below for $p\mid
N^\flat$.\quad$\square$\vspace{10pt}

For $p\nmid d_BN$, $\pi_p$ is equivalent to the unramified
continuous series $\pi(|\,|_p^{\dot s_p }, |\,|_p^{\ddot s_p })$
with central character $\tilde\chi_p$. Since $\pi_p$ is
pre-unitary, either
\begin{align*}
\dot s_p&=-\ddot
s_p-\log_p(\chi(p))\in \textstyle i\R/\frac{2\pi
i}{\log(p)}\Z,\hspace{10pt}\text{or}\vspace{5pt}\\ \dot
s_p&=-\ov{\ddot s_p} =\sigma_p+\ic t_p\,,\hspace{10pt}\text{with}\\
\sigma_p&\in(0,\tfrac12),\hspace{10pt}
t_p\in\tfrac{i}2\log_p(\chi(p))+
\tfrac{\pi}{\log(p)}\Z/\tfrac{2\pi}{\log(p)}\Z.
\end{align*}
Casselman's theorem implies that the right
$\ord_p\ti$-invariant vector $\Psi_p$ is unique up to scaling
in $\pi_p$, and hence corresponds to $V_p^0\in\pi(|\,|_p^{\dot
s_p }, |\,|_p^{\ddot s_p })$ of the form
\begin{align*}
V_p^0\left(\left[\begin{mat}{cc}\dot a &*\\
&\ddot a \end{mat}\right]\kappa\right)= |\dot a |_p^{\dot
s_p }\,|\ddot a |_p^{\ddot s_p }\left|\tfrac{\dot a }{\ddot a
}\right|_p^{\frac12} V_p^0(1)\hspace{20pt}\text{for \,}\kappa\in\ord_p\ti.
\end{align*}
In terms of the parameters, $\Psi_p$ has $T_p(p,1)$ eigenvalue
$\lambda_p=p^{\frac12}\left(p^{-\dot s_p }+p^{-\ddot s_p }\right)$
and $T_p(p)$ eigenvalue $\chi(p)=p^{-\dot s_p -\ddot s_p }$, and
hence by the analysis in \S\ref{hring}, $T_p^{[p^n]}$ eigenvalue
\begin{align*}
\lambda_{p^n}&=p^{\frac{n}2}\left( p^{-n\dot s_p }+p^{-(n-1)\dot
s_p -\ddot s_p }+ \cdots+p^{-n\ddot s_p }\right)\\
&=p^{\frac{n}2}\left( \frac{p^{-(n+1)\dot s_p }-p^{-(n+1)\ddot
s_p }} {p^{-\dot s_p }-p^{-\ddot s_p }}\right).
\end{align*}

If $p\mid N^\flat$, $\pi_p$ is again equivalent to an unramified
continuous series, exactly as above. However by Casselman's
theorem, the subspace of right $\ordpx$-invariant vectors is
spanned by a right $\dot\ord_p\ti$-invariant vector $\dot V_p^0$
and a right $\ddot\ord_p\ti$-invariant vector $\ddot V_p^0$, each
unique up to scaling.  We relate their normalizations by defining
\begin{align*}
\ddot V_p^0=\left[\begin{mat}{cc}1&\\
&p\end{mat}\right]\hspace{-3pt}\Big|\dot V_p^0 &= R_p(\vec
N)\,\dot V_p^0,\\\Longrightarrow\hspace{10pt}\chi(p)\,
\dot V_p^0=T_p(p)\,\dot V_p^0&=R_p(\vec N)\,\ddot V_p^0.
\end{align*}
Using coset representatives, we see that
\begin{align*}
T_p(p,1)\,\dot V_p^0&=\lambda_p\,\dot
V_p^0-\ddot V_p^0,\\
T_p(p,1)\,\ddot V_p^0&=p\,\chi(p)\,\dot V_p^0.
\end{align*}
We have now determined the actions of $R_p(\vec N)$ and
$T_p(p,1)$, and hence also
\begin{align*}
T_p(1,p)&=R_p(\vec N)^{-1}\,T_p(p,1)\,R_p(\vec N),\\
T_p^{[p]}&=T_p(p,1)+T_p(1,p)+R_p(\vec N).
\end{align*}
With respect to the basis $\left[\dot V_p^0,\ddot
V_p^0\right]$, these operators are represented by the matrices
\begin{alignat*}{2}
T_p(p,1)&\sim\left[\begin{mat}{cc}\lambda_p&p \,\chi(p)\\-1&
\end{mat}\right],&\hspace{10pt}
T_p(1,p)&\sim\left[\begin{mat}{rc}&-\chi(p)\\p& \lambda_p
\end{mat}\right],\\
T_p^{[p]}&\sim\left[\begin{mat}{cc}\lambda_p&p\,\chi(p)\\
\hspace{4.7pt}p\hspace{4.7pt} & \lambda_p\end{mat}\right],&
R_p(\vec N) &\sim
\left[\begin{mat}{rc}&\hspace{7.7pt}\chi(p)\\
\hspace{1pt}1\hspace{.5pt}
\end{mat}\right].
\end{alignat*}
The latter two are normal and commute, and so have orthogonal
eigenvectors
\begin{alignat*}{2}
V_p^\flat&:=\dot V_p^0+\ov{\veps_p}\,\ddot V_p^0,&
\veps_p^2&=\chi(p),\\
T_p^{[p]}\,V_p^\flat&=(\lambda_p+p\,\veps_p)V_p^\flat,
\hspace{30pt}& R_p(\vec N)\,V_p^\flat &=\veps_p\,V_p^\flat.
\end{alignat*}
Then by the considerations of
\S\ref{hring},
\begin{align*}
T_p^{[p^n]}\,V_p^\flat=(\lambda_{p^n}+p\,
\veps_p\,\lambda_{p^{n-1}})V_p^\flat.
\end{align*}
The idempotents $\dot T_p(1),\,\ddot T_p(1)$ act as orthogonal
projectors to $\C\,\dot V_p^0,\,\C\,\ddot V_p^0$, and using the
above calculations we see that $\dot T_p(1)\,\ddot V_p^0=
\tfrac{\lambda_p}{p+1}\,\dot V_p^0$,  so
\begin{align*}
\langle\ddot V_p^0,\dot V_p^0\rangle= \tfrac{\lambda_p}{p+1} =
\chi(p)\,\langle \dot V_p^0,\ddot V_p^0\rangle, \hspace{30pt}
\langle V_p^\flat, V_p^\flat\rangle= 2\big(1+
\tfrac{\ov{\veps_p}\,\lambda_p}{p+1}\big).
\end{align*}

For $p\mid N^\sharp$, $\pi_p$ is equivalent to the special
representation $\sigma(|\,|_p^{\dot s_p },|\,|_p^{\ddot s_p })$,
where
\begin{align*}
\dot s_p =-\ov{\ddot s_p }=\tfrac12+\ic t_p\,,
\hspace{30pt} t_p\in\tfrac{\ic}2
\log_p(\chi(p))+\tfrac{\pi}{\log(p)}\Z/\tfrac{2\pi}{\log(p)}\Z.
\end{align*}
This is an irreducible invariant subspace of ${\rm
Ind}_{T_p}^{G_p} (|\,|_p^{\dot s_p },|\,|_p^{\ddot s_p })$,
consisting of $V_p$ s.t.\
\begin{align*}
\int_{\Q_p}V_p\left(\left[\begin{mat}{cc}1&\\x&1\end{mat}\right]
\right)\,dx=0.
\end{align*}
For an $\ord_p\ti$-invariant vector $V_p^\sharp$, this is
equivalent to
\begin{align*}
|\ddot N|_p\,V_p^\sharp(1)+|\dot N|_p\,V_p^\sharp
\left(\left[\begin{mat}{cc}&1\\1&\end{mat}\right]\right)=0,
\end{align*}
and so for $\kappa\in\ord_p\ti$,
\begin{align*}
V_p^\sharp\left(\left[\begin{mat}{cc} \dot a  & * \\ & \ddot a
\end{mat}\right]\kappa\right)&=\phantom{-}|\dot a
\ddot a |_p^{\ic t_p}\left|\tfrac{\dot a }{\ddot a }\right|_p
V_p^\sharp(1),\\
V_p^\sharp\left(\left[\begin{mat}{cc}*&\dot a\\\ddot
a&\end{mat}\right]\kappa\right)&= -|\dot a \ddot a
|_p^{\ic t_p}\left|\tfrac{\dot a\ddot N}{\ddot a\dot
N}\right|_pV_p^\sharp(1).
\end{align*}
Casselman's theorem implies that $\Psi_p$ corresponds to
$V_p^\sharp$ of this form, and thus has $R_p(\vec N)$
eigenvalue $\veps_p=-p^{-\ic t_p}$, $T_p(p)$ eigenvalue
$\veps_p^2=\chi(p)$, both $T_p(p,1)$ and $T_p(1,p)$ eigenvalues
$-\veps_p$. \big(Note $T_p(1,p)=R_p(\vec N)^{-1}\,\,T_p(p,1)
\,R_p(\vec N)$.\big) Thus by the calculations of \S\ref{hring},
$\Psi_p$ has $T_p^{[p^n]}$ eigenvalues
$\lambda_p^{[p^n]}=p^{-n\ic t_p}$.

For $p\mid N^\chi$, $\pi_p$ is equivalent to the pre-unitary
continuous series $\pi(\tilde\chi_p|\,|_p^{\ic t_{p}},
|\,|_p^{-\ic t_{p}})$, with $t_p\in\R/\frac{2\pi}{\log(p)}\Z$,
and the new vector $V_p^\chi$ has the form, for
$\kappa\in\ord_p\ti$,
\begin{align*}
V_p^\chi\left(\left[\begin{mat}{cc} \dot a  & * \\ & \ddot a
\end{mat}\right]\kappa\right)&=\tilde\chi_p(\dot a \kappa)
\left|\tfrac{\dot a }{\ddot a}\right|_p^{\frac12+\ic t_p}
V_p^\chi(1),\\
V_p^\chi\left(\left[\begin{mat}{cc}*&\dot a\\
\ddot a &\end{mat}\right] \kappa\right)&=0.
\end{align*}
Therefore $\Psi_p$ has $T_p^\chi(p^n,1)$ eigenvalues $p^{\frac
n2-n\ic t_p}$, all $T_p^\chi(p^n)$ eigenvalues 1, and hence
$T_p^\chi(1,p^n)$ eigenvalues $p^{\frac{n}2+n\ic t_p}$.

For $p\mid d_B$, $\pi_p$ is an irreducible representation of
$\diva_p\ti$, trivial on $\valr_p\ti$, with unramified central
character $\tilde\chi_p$.  It is easy to see from this that
$\pi_p$ is one-dimensional and given by the character
$|\nu(\cdot)|_p^{\ic t_p}$, with $t_p\in
\tfrac{i}2\log_p(\chi(p))+ \tfrac\pi{\log(p)}\Z/
\tfrac{2\pi}{\log(p)}\Z$\,.  $\Psi_p$ must be a multiple of
this character, and so has $T_p^{[p^n]}$ eigenvalues
$\lambda_p^{[p^n]}=p^{-n\ic t_p}$.  The representation
$\pi_p^{\rm JL}$ of $\GL_2(\Q_p)$ associated to $\pi_p$ by
Jacquet--Langlands is the special representation
$\sigma(|\,|_p^{\frac12+\ic t_p },|\,|_p^{-\frac12+\ic t_p })$.

For $v=\infty$ and $k=0$, we have $\tilde\chi_\infty=1$, and
$\pi_\infty$ is equivalent to the pre-unitary continuous series
$\pi(\sgn^{\delta_\infty}|\,|_\infty^{s_\infty},
\sgn^{\delta_\infty}|\,|_\infty^{-s_\infty}).$ \ $V_\infty^0$
corresponding to $\Psi_\infty$ has the form,
\begin{align*}
V_\infty^0\left(\left[\begin{mat}{cc}\dot a &*\\
&\ddot a \end{mat}\right]\kappa\right)=
\sgn^{\delta_\infty}\!\!\left(\tfrac{\dot a }{\ddot a }\right)
\left|\tfrac{\dot a }{\ddot a }\right|_\infty^{\frac12+s_\infty}
V_\infty^0(1)\hspace{20pt}\text{for \,}\kappa\in\SO(2,\R).
\end{align*}
Thus $\Psi_\infty$ has $T_\infty^-$ eigenvalue
$(-1)^{\delta_\infty}$, $T_\infty^\Delta$ eigenvalue
$\lambda_\infty=(\tfrac14-s_\infty^2)>0$, and
$T_{X_\infty}^{0,\xi}=T_{Y_\infty}^{0,\xi}$ eigenvalues
\begin{align*}
\lambda_\infty^{0,\xi}&= \int_{\R^+\lmod B_\infty^+}
(V_\infty^0(\beta)/V_\infty^0(1))\, \phi_{X_\infty}^{0,\xi}(\beta)
\,\dit\beta \\&= \int_\R\int_{\R^+}\int_0^{2\pi}
y^{\frac12+s_\infty}\,\tfrac1\pi\exp(-\pi\xi(y^2+x^2+1)y^{-1})
\,y^{-2}\,dx\,dy\,d\theta \\&=2\,\xi^{-\frac12}\int_{\R^+}
\exp({-\pi\xi(y+y^{-1})})\,y^{s_\infty-1}\,dy\\&=
4\,\xi^{-\frac12} \,K_{s_\infty}(2\pi\xi).
\end{align*}

We are not considering $|k|=1$, but we mention that in this case
$\pi_\infty$ is equivalent to the continuous series\hspace{3pt}
$\pi(\sgn^{1+\delta_\infty}|\,|_\infty^{s_\infty},
\sgn^{\delta_\infty}|\,|_\infty^{-s_\infty}).$

If $|k|\ge2$, $\pi_\infty$ is equivalent to the discrete series
$\sigma(\sgn^k|\,|_\infty^{s_\infty}, |\,|_\infty^{-s_\infty})$
for\linebreak $s_\infty=\frac{|k|-1}2$. Upon restriction to
$\SL_2(\R)$, this representation decomposes as the direct sum of
the weight-$|k|$ holomorphic and anti-holomorphic discrete series,
and the vector $V_\infty^k$ corresponding to $\Psi_\infty$ is a
lowest-weight vector in either the first or second of these,
depending on whether $k>0$ or $k<0$. Thus $V_\infty^k$ has the
form,
\begin{align*}
V_\infty^k\left(\left[\begin{mat}{cc}\dot a &*\\
&\ddot a\end{mat}\right]\kappa\right)=
\tilde\chi_\infty(\dot a \kappa) \left|\tfrac{\dot a }{\ddot a
}\right|_\infty^{\frac{|k|}2} V_\infty^k(1)
\hspace{20pt}\text{for \,}\kappa\in\SO(2,\R),
\end{align*}
and $\Psi_\infty$ has $T_\infty^\Delta$ eigenvalue
$\lambda_\infty=\frac{|k|(2-|k|)}4$, while
$T_\infty^-V_\infty^{\pm k}=V_\infty^{\mp k}$. We define
\begin{align*}
T_{X_\infty}^{k,\xi}\Psi_\infty=\lambda_{X_\infty}^{k,\xi}
\Psi_\infty,\hspace{30pt}
T_\infty^-T_{Y_\infty}^{k,\xi}\Psi_\infty=
\lambda_{Y_\infty}^{k,\xi}\Psi_\infty.
\end{align*}
Our calculation is similar to that for $k=0$, but we now use
the mean value and conformal mapping properties of harmonic
functions. These eigenvalues can also be computed by
application of Selberg's lemma on invariant integral operators.
\begin{align*}
\lambda_{X_\infty}^{k,\xi}&= \int_\R
\int_{\R^+}\int_0^{2\pi}y^{\frac{|k|}2}
\tfrac1\pi\left(\tfrac{y+1-ix}{2y^{1/2}}\right)^{\und k}
\exp\left(-\pi\xi\left(\tfrac{y^2+x^2+1}y\right)\right)
y^{-2}\,dx\,dy\,d\theta\\
&= \int_\R \int_{\R^+}2\left(\tfrac{1-iz}2\right)^{\und k}\,
\exp(-2\pi\xi\cosh\dist(i,z))\,y^{-2}\,dx\,dy\\
&= \int_\R \int_{\R^+} 2
\exp\left(-\pi\xi\left(\tfrac{y^2+x^2+1}y\right)\right)
y^{-2}\,dx\,dy\\&= 4\,\xi^{-\frac12}K_{-\frac12}(2\pi\xi)\ =\
2\,\xi^{-1}\,\e(i\xi),\displaybreak[0]\\[8pt]
\lambda_{Y_\infty}^{k,\xi}&= \int_\R
\int_{\R^+}\int_0^{2\pi}y^{\frac{|k|}2}\tfrac1\pi
\left(\tfrac{y-1-ix}{2y^{1/2}}\right)^{\und k}
\exp\left(-\pi\xi\left(\tfrac{y^2+x^2+1}y\right)\right)y^{-2}
\,dx\,dy\,d\theta\\ &= \int_\R
\int_{\R^+}2\left(\tfrac{-1-iz}2\right)^{\und k}\,
\exp(-2\pi\xi\cosh\dist(i,z))\,y^{-2}\,dx\,dy =0.
\end{align*}

\section{Whittaker Models}
\label{whit}

In this section, we restrict our attention to $B\ti=\GL_2=G$
(i.e.\ $d_B=1$) and $\ord(1,(1,M))$, $M$ square-free. Let
$\e_\infty(x)=\e(x)=\exp(2\pi ix)$ for $x\in\R$, and define the
character $\e_p$ of $\Q_p$, with kernel $\Z_p$, on $x\in\Z[p^{-1}]
\subset\Q_p$ by $\e_p(x)=\e(-x)$. Thus $\e_\A=\prod_v\e_v$ is
trivial on $\Q$ and unramified.

Any $F\in L^2_0(G_\Q\lmod G_\A,\tilde\chi)$ has a
Fourier--Whittaker series expansion,
\begin{align*}
F(g)={}&\sum_{\xi\in\Q\ti}\widehat{F}\left(
\left[\begin{mat}{cc}\xi&\\&1\end{mat}\right]g\right),\\
\widehat{F}(g):={}&\int_{\Q\lmod \A}F\left(
\left[\begin{mat}{cc}1&x\\&1\end{mat}\right]g\right)
\ov{\e_\A(x)}\,dx.
\intertext{We will see in \S\ref{adjsqz} that}
\langle F,F\rangle={}&\int_{\A\ti}\left|\widehat{F}
\left(\left[\begin{mat}{cc}a&\\
&1\end{mat}\right]\right)\right|^2|a|^{-1}_\A\,da.
\end{align*}
Define the global Whittaker model $\W=\W(\pi)\simeq\pi$ as the
image of $\pi$ under $F\mapsto\widehat{F}$. Then
$\W=\tilde\otimes_v\W_v$, where all $W_v\in\W_v\simeq\pi_v$ are
smooth functions on $G_{\Q_v}$ s.t.
\begin{align*}
W_v\left(\left[\begin{mat}{cc}1&x\\&1\end{mat}\right]
g\right)= \e_v(x)\,W_v(g)\hspace{20pt}\text{for \,}x\in\Q_v.
\end{align*}
We will explicitly compute those $W_v$ having the same right
$K_v$ types as forms in $\tilde
S_k(1,(1,M),\tilde\chi,M^\flat)$.  By the uniqueness of $\W_v$
proved in \cite{jl}, it suffices for us to write down
candidates $W_v^\star$ and check that they are $\Hal_v^\star$
eigenvectors.  Of course we could solve for these functions
directly, by working backwards through the calculations below,
but we find it easier to `guess' them based on the results of
\S\ref{jls}.

An important feature of the Whittaker models is that they endow
forms on $\GL_2$ with an arithmetic normalization distinct from
the spectral one. An eigenform $F$ is called Hecke normalized if
$\widehat{F}=\prod_v W_v^{\star}$. The $W_v^\star$ have been
scaled so that $W_p^\star(1)=1$ and $W_\infty^\star$ has
prescribed asymptotics in the cusp.  We denote by $\check
W_v^\star\in\W^\vee_v$ the Whittaker functions of $\ov F$,
\begin{align*}
\check W_v^\star(g)=\ov{{W{}}^\star_v(\eps g)}\hspace{20pt}\text{for \,}
\eps=\left[\begin{mat}{cc}-1&\\&1\end{mat}\right].
\end{align*}

For $p\nmid M$, we define $W_p^0$ by
\begin{align*}
W_p^0\left(\left[\begin{mat}{cc}\dot a&\\&\ddot a\end{mat}\right]\right)
=\left(\tfrac{|\dot ap|^{\dot s}\,|\ddot a|^{\ddot s}-|\dot
ap|^{\ddot s}\,|\ddot a|^{\dot s}} {|p|^{\dot s}-|p|^{\ddot
s}}\right)\big|\tfrac{\dot a}{\ddot
a}\big|^{\frac12}\,\ind_{\Z_p}\big(\tfrac{\dot a}{\ddot a}\big),
\end{align*}
and check that
\begin{align*}
T_p(p,1)\,W_p^0\left(\left[
\begin{mat}{cc}a&\\&1\end{mat}\right]\right)
&=\sum_{x\,{\rm mod}\,p}W_p^0\left(\left[
\begin{mat}{cc}ap&ax\\&1\end{mat}\right]\right)+
W_p^0\left(\left[
\begin{mat}{cc}a&\\&p\end{mat}\right]\right)\\
&=\sum_{x\,{\rm mod}\,p} \e_p(ax)\left(\tfrac{|ap^2|^{\dot
s}-|ap^2|^{\ddot s}} {|p|^{\dot s}-|p|^{\ddot s}}\right)
|ap|^{\frac12}\,\ind_{\Z_p}(ap)\\
&\hspace{30pt} +\
\,\left(\tfrac{|ap|^{\dot s}\,|p|^{\ddot s}-|ap|^{\ddot
s}\,|p|^{\dot s}} {|p|^{\dot s}-|p|^{\ddot
s}}\right)\big|\tfrac{a}p\big|^{\frac12}\,
\ind_{\Z_p}\big(\tfrac{a}p\big)\\
&=\left(\tfrac{p^{-\dot s}\,|ap|^{\dot s}-p^{-\ddot
s}\,|ap|^{\ddot s}} {p^{-\dot s}-p^{-\ddot s_p}}\right)
p^{\frac12}|a|^{\frac12}\,\ind_{\Z_p}(a)\\
&\hspace{30pt} +\
\,\left(\tfrac{p^{-\ddot s}\,|ap|^{\dot s}- p^{-\dot
s}\,|ap|^{\ddot s}} {p^{-\dot s}-p^{-\ddot
s}}\right)p^{\frac12}|a|^{\frac12}\,\ind_{\Z_p}(a)\\
&=\lambda_p\,W_p^0\left(\left[
\begin{mat}{cc}a&\\&1\end{mat}\right]\right).
\end{align*}

Recall that the case of $p\mid M^\flat$ is essentially the same as
the previous one, but we must consider a two-dimensional space of
oldvectors. Taking $\dot W_p^0$ as above, we have already shown
$\dot W_p^0\in\W_p$\,, and we simply define
\begin{align*}
\ddot W_p^0&=\left[\begin{mat}{cc}1&\\
&p\end{mat}\right]\hspace{-3pt}\Big|\dot W_p^0 =
R_p(1,p)\,\dot W_p^0,\\ W_p^\flat&=\dot
W_p^0+\ov{\veps_p}\,\ddot W_p^0,\hspace{20pt}
\veps_p^2=\chi(p).
\end{align*}

For $p\mid M^\sharp$, we define
\begin{align*}
W_p^\sharp\left(\left[\begin{mat}{cc}\dot a &\\&\ddot a
\end{mat}\right]\right)&=\phantom{-}
|\dot a\ddot a|_p^{\ic t_p}\,\big|\tfrac{\dot a}{\ddot
a}\big|_p\,\ind_{\Z_p}\left(\tfrac{\dot a}{\ddot a}\right),\\
W_p^\sharp\left(\left[\begin{mat}{cc} &\dot a\\ \ddot a&
\end{mat}\right]\right)&=
-|\dot a\ddot a|_p^{\ic t_p}\,\big|\tfrac{\dot ap}{\ddot
a}\big|_p\,\ind_{\Z_p}\big(\tfrac{\dot ap}{\ddot a}\big),
\end{align*}
and check that
\begin{gather*}
\begin{aligned}
R_p(1,p)[W_p^\sharp]\left(\left[\begin{mat}{cc}a &\\&1
\end{mat}\right]\right)&=
W_p^\sharp\left(\left[\begin{mat}{cc}&a\\ p&
\end{mat}\right]\right)\\
&=-p^{-\ic t_p}\,W_p^\sharp\left(\left[\begin{mat}{cc}a &\\&1
\end{mat}\right]\right),
\end{aligned}\displaybreak[0]\\[8pt]
\begin{aligned}
R_p(1,p)[W_p^\sharp]\left(\left[\begin{mat}{cc}&a\\ p&
\end{mat}\right]\right)&=p^{-2\ic t_p}\,
W_p^\sharp\left(\left[\begin{mat}{cc}a &\\&1
\end{mat}\right]\right)\\&= -p^{-\ic t_p}\,
W_p^\sharp\left(\left[\begin{mat}{cc}&a\\ p&
\end{mat}\right]\right),
\end{aligned}\displaybreak[0]\\[8pt]
\begin{aligned}
T_p(p,1)[W_p^\sharp]\left(\left[\begin{mat}{cc}a &\\&1
\end{mat}\right]\right)
&=\sum_{x\,{\rm mod}\,p}W_p^\sharp\left(\left[
\begin{mat}{cc}ap&ax\\&1\end{mat}\right]\right)\\
&=\sum_{x\,{\rm mod}\,p} \e_p(ax)\,|ap|_p^{1+\ic
t_p}\,\ind_{\Z_p}(ap)\\ &= |ap|_p^{1+\ic
t_p}\,p\,\ind_{\Z_p}(a)\ \,=\ \,p^{-\ic t_p}\,
W_p^\sharp\left(\left[\begin{mat}{cc}a &\\&1
\end{mat}\right]\right),
\end{aligned}\displaybreak[0]\\[8pt]
\begin{aligned}
\lefteqn{T_p(p,1)[W_p^\sharp]\left(\left[\begin{mat}{cc}&a\\1&
\end{mat}\right]\right)=\sum_{x\,{\rm
mod}\,p} W_p^\sharp\left(\left[\begin{mat}{cc}&a\\p&x
\end{mat}\right]\right)}\hspace{30pt}\\
&=W_p^\sharp\left(\left[\begin{mat}{cc}&a\\
p&\end{mat}\right]\right)+\sum_{x\not\equiv0}
W_p^\sharp\left(\left[\begin{mat}{cc}1&\tfrac
ax\\&1\end{mat}\right]
\left[\begin{mat}{cc}ap&\\&1\end{mat}\right]
\left[\begin{mat}{rc}-\tfrac{1}{x}&\\p&x
\end{mat}\right]\right)\\
&= -p\,|ap|_p^{1+\ic t_p}\,\ind_{\Z_p}(a)+
\sum_{x\not\equiv0}\e_p(\tfrac{a}{x}) \,|ap|_p^{1+\ic
t_p}\,\ind_{\Z_p}(ap)\\&= -p\,|ap|_p^{1+\ic
t_p}\,\ind_{\Z_p}(a)+ |ap|_p^{1+\ic t_p}\left(p\,\ind_{\Z_p}(a)
-\ind_{\Z_p}(ap)\right)\\&=-|ap|_p^{1+\ic t_p}\,
\ind_{\Z_p}(ap)\ \,=\ \,p^{-\ic t_p}\,W_p^\sharp
\left(\left[\begin{mat}{cc}&a\\1&
\end{mat}\right]\right).
\end{aligned}
\end{gather*}

For $p\mid M^\chi$, we define
\begin{align*}
W_p^\chi\left(\left[\begin{mat}{cc}\dot a &\\&\ddot a
\end{mat}\right]\right)&=\tilde\chi_p(\dot a)\,
\big|\tfrac{\dot a}{\ddot a}\big|_p^{\frac12+\ic t_p}\,
\ind_{\Z_p}\big(\tfrac{\dot a}{\ddot a}\big),\\
W_p^\chi\left(\left[\begin{mat}{cc}&\dot a\\\ddot a&
\end{mat}\right]\right)&=\ov{\gs_p}\,\tilde\chi_p(\ddot a)\,
\big|\tfrac{\dot ap^2}{\ddot a}\big|_p^{\frac12-\ic t_p}\,
\ind_{\Z_p}\big(\tfrac{\dot ap}{\ddot a}\big),\\
\text{for \,}\gs_p&=\int_{\frac1p\Z_p\ti} \ov{\tilde\chi_p(x)}\,\e_p(x)\,dx,
\end{align*}
and check that
\begin{align*}
\lefteqn{T_p^\chi(p,1)[W_p^\chi]\left(\left[\begin{mat}{cc}a &\\&1
\end{mat}\right]\right) = \sum_{x\,{\rm mod}\,p}
W_p^\chi\left(\left[\begin{mat}{cc}ap&ax\\&1
\end{mat}\right]\right)\,\ov{\tilde\chi_p(p)}}\hspace{30pt}\\
&=\sum_{x\,{\rm mod}\,p} \e_p(ax)\,\tilde\chi_p(a)\,
|ap|_p^{\frac12+\ic t_p}\,\ind_{\Z_p}(ap)\\ &=\tilde\chi_p(a)\,
|ap|_p^{\frac12+\ic t_p}\,p\,\ind_{\Z_p}(a)\ \,=\ \,
p^{\frac12-\ic t_p}\, W_p^\chi\left(\left[\begin{mat}{cc}a &\\&1
\end{mat}\right]\right),\displaybreak[0]\\[8pt]
\lefteqn{T_p^\chi(p,1)[W_p^\chi]\left(\left[\begin{mat}{cc}&a\\
1&\end{mat}\right]\right) =  \sum_{x\,{\rm mod}\,p}
W_p^\chi\left(\left[\begin{mat}{cc}&a\\
p&x\end{mat}\right]\right)\ov{\tilde\chi_p(p)}}\hspace{30pt}\\
&=W_p^\chi\left(\left[\begin{mat}{cc}&
a\\p&\end{mat}\right]\tfrac1p\right)+
\sum_{x\not\equiv0}W_p^\chi\left(\left[\begin{mat}{cc}1&\tfrac
ax\\&1\end{mat}\right]
\left[\begin{mat}{cc}ap&\\&1\end{mat}\right]
\left[\begin{mat}{rc}-\tfrac{1}{x}&\\p&x\end{mat}\right]
\tfrac1p\right)\\
&=\ov{\gs_p}\,|ap|_p^{\frac12-\ic t_p}\,
\ind_{\Z_p}(a)+\sum_{x\not\equiv0}
\e_p(\tfrac{a}{x})\,\tilde\chi_p(-\tfrac
ax)\,|ap|_p^{\frac12+\ic t_p}\, \ind_{\Z_p}(ap)\\
&=\ov{\gs_p}\,|ap|_p^{\frac12-\ic t_p}\,
\ind_{\Z_p}(a)+\ov{\gs_p}\,\ind_{\Z_p\ti}(ap)\ \,=\ \,
p^{\frac12-\ic t_p}\,W_p^\chi\left(\left[\begin{mat}{cc}&a\\
1&\end{mat}\right]\right).
\end{align*}

For $v=\infty$, recall the classical Whittaker function
$w_{\kappa,\mu}$ which uniquely solves
\begin{gather*}
w^{\prime\prime}_{\kappa,\mu}(y)+\left(-\frac14+\frac\kappa y
+\frac{\tfrac14-\mu^2}{y^2}\right)w_{\kappa,\mu}(y)=0\\[6pt]
\text{under}\hspace{10pt}w_{\kappa,\mu}(y)\sim y^\kappa
e^{-\frac y2}\hspace{10pt}\text{as }y\to\infty.
\end{gather*}
In particular,
\begin{align*}
w_{\kappa,\mu}(y)&=\frac{y^{\mu+\frac12}\, e^{-\frac y2}}
{\Gamma(\mu-\kappa+\frac12)}\int_0^\infty e^{-yt}\,
t^{\mu-\kappa-\frac12}\, (1+t)^{\mu+\kappa-\frac12}\,dt,\\
w_{0,\mu}(y)&=\big(\tfrac y\pi\big)^{\frac12}\,K_\mu(\tfrac y2),
\hspace{30pt}w_{\frac k2,\frac{k-1}2}(y)=y^{\frac k2}\,e^{-\frac y2}.
\end{align*}

If $k=0$, define $W_\infty^0\in\W_\infty$ by
\begin{align*}
W_\infty^0\left(\left[\begin{mat}{cc}\dot a &\\&\ddot a
\end{mat}\right]\right)=\sgn^{\delta_\infty}\big(\tfrac{\dot a}{\ddot
a}\big)\,w_{0,s_\infty}\big(\tfrac{\dot a}{\ddot a}\big).
\end{align*}
This has moderate growth and satisfies
\begin{align*}
T_\infty^-[W_\infty^0]&=(-1)^{\delta_\infty}\,W_\infty^0\,,\\
T_\infty^\Delta[W_\infty^0]&=(\tfrac14-s_\infty^2)\,W_\infty^0\,.
\end{align*}
If $|k|\ge 2$, define $W_\infty^k\in\W_\infty$ by
\begin{align*}
W_\infty^k\left(\left[\begin{mat}{cc}\dot a &\\&\ddot a
\end{mat}\right]\right)=\ind_{k\R^+}\big(\tfrac{\dot a}{\ddot a}\big)\,
w_{\frac{|k|}2,\frac{|k|-1}2}\big(\tfrac{\dot a}{\ddot a}\big)\,,
\end{align*}
which has moderate growth and satisfies, for $\pm k\ge2$,
\begin{align*}
R_k^\mp[W_\infty^k]&=0,\\
T_\infty^\Delta[W_\infty^k]&=\tfrac{|k|(2-|k|)}4\,W_\infty^k\,.
\end{align*}

\chapter{Theta Lifting}

\section{Basic Setup}

\subsection{Groups}

Following \cite{hk1}, let $G=\GL_2$\,, $H=\GSp_6$\,, $\Gb=G^3\cap
H$ as linear algebraic groups over $\Q$.  The similitude character
of $H$ and its restriction to $\Gb$ will be denoted $\nu$ (and
distinguished by context from the reduced norm of $B$).  Also
consider $B$,\, $B\ti$,\, $H'=\GO(B,\nu)$,\,
$\Gbp=H^{\prime3}\cap\GO(B^3,\oplus^3\nu)$ as linear algebraic
groups over $\Q$, again denoting the similitude characters by
$\nu$. Write $\breve H'$ for the connected component of the
identity in $H'$, and define $\rho:B\ti\times B\ti
\rightarrow\breve H'$\, by $\rho(\dot\alpha ,\ddot\alpha)(\beta)=
\dot\alpha\beta \ddot\alpha ^{-1}.$ We see there is an exact
sequence
\begin{align*}
1\longrightarrow {\rm Z}(B\ti)\stackrel{\diag}{\longrightarrow}
(B\ti\times B\ti) \stackrel{\rho}{\longrightarrow} \breve
H'\longrightarrow 1.
\end{align*}
We may view the anti-involution $\iota$ as an element of order
two in $H'_\Q$, $\iota(\beta)=\beta^\iota$. It and $\breve H'$
generate $H'=\breve H'\rtimes\langle\iota\rangle$ as a
semi-direct product, $\rho(\dot\alpha ,\ddot\alpha
)\,\iota=\iota\,\rho\left(\tfrac{\ddot\alpha}{\nu(\ddot\alpha)},
\tfrac{\dot\alpha}{\nu(\dot\alpha)}\right)$.  Note that
\begin{gather*}
\textstyle{\prod_v(\Z/2\Z)}\stackrel\sim\longrightarrow
\langle\iota\rangle_\A,\\
(\delta_v)\mapsto(\iota_v^{\delta_v}).
\end{gather*}
To compute inner-products on $H'$, we will use the
parametrization
\begin{align*}
\rho:PB\ti\times PB\ti\stackrel\sim\longrightarrow P\breve H'.
\end{align*}
We will also need Shimizu's parametrization,
\begin{gather*}
\begin{aligned}
B\ti\rtimes PB\ti &{}\stackrel\sim\longrightarrow\breve H',\\
B^{(\delta)}\times PB\ti &{}\stackrel{\text{1-1}}
\longrightarrow\breve H^{\prime(\delta)},
\end{aligned}\\
(\dot\beta,\ddot\beta)\longmapsto
\rho(\dot\beta\ddot\beta,\ddot\beta).
\end{gather*}

\subsection{Measures}
\label{tmeas}

We now describe the canonical Tamagawa measures on each of
these groups, as in \cite{weil2,vig}.  Let $dx=dx_v$ be the
Haar measure on $\Q_v$ such that the Fourier transform is
self-dual \big(${\rm vol}(\R/\Z)={\rm vol}(\Z_p)=1={\rm
vol}(\A/\Q)$\big): For $\vp\in\Sz(\Q_v)$,
\begin{align*}
\F\vp(y)&=\int_{\Q_v}\vp(y)\,\e_v(xy)\,dx=\F^{-1}\vp(-y).
\end{align*}
Then define $d\ti\!x_v=\tfrac{\zeta_v(1)}{|x|_v}dx_v$, so that
${\rm vol}\ti\!(\R^+/e^\Z)={\rm vol}\ti\!(\Z_p\ti)=1$.

Identify $B_v$ with its algebraic dual $B_v^*$ using the
non-degenerate symmetric bilinear form $\langle
\alpha,\beta\rangle= \tr(\alpha\beta^\iota)=
\alpha\beta^\iota+\beta\alpha^\iota$.  Now choose the Haar
measure $d\alpha=d\alpha_v$ on $B_v$ so that the
`$\iota$-twisted' Fourier transform is self-dual: For
$\vp\in\Sz(B_v)$,
\begin{align*}
\F\vp(\beta)&=\int_{B_v}\vp(\alpha)\,\e_v(\langle\alpha,\beta\rangle)
\,d\alpha=\F^{-1}\vp(-\beta).
\end{align*}
If $v\nmid d_B$, $d\alpha=\prod_{ij}d\alpha_{ij}$, while for
$p\mid d_B$, ${\rm vol}(\valr_p)=\tfrac1p$\,. It follows from
calculating ${\rm vol}(B_\infty/\ord(d_B,\vec N))=d_BN$\,
that\, ${\rm vol}(B_\A/B_\Q)=1$.

As before, define
$d\ti\!\alpha_v=\tfrac{\zeta_v(1)}{|\alpha|_v}d\alpha_v$. It is
well known and follows from the calculations in \S\ref{hops} that
\begin{align*}
\volx(\ordx_p(d_B,\vec N))=\zeta_p(2)^{-1}
\begin{cases}
(p-1)^{-1} & \text{if }p\mid d_B,\\ (p+1)^{-1} & \text{if }p\mid N, \\
\phantom{(}1 & \text{if }p\nmid d_BN.
\end{cases}
\end{align*}
On $B_\infty$, $\dti\alpha_\infty=\dti a_1\,\dti a_2\, dx\,
d\theta$ in terms of the coordinates
\begin{align*}
\alpha_\infty= \left[\begin{mat}{cc}a_1&\\&a_2\end{mat}\right]
\left[\begin{mat}{cc}1&x\\&1\end{mat}\right]\kappa_\theta,
\hspace{20pt}\text{for \,}0\le\theta<\pi.
\end{align*}
We will also write $\dti\alpha_v$ for the measure on $PB\ti_v$
compatible with the exact sequence
\begin{align*}
1\longrightarrow \Q_v\ti\longrightarrow B\ti_v\longrightarrow PB\ti_v
\longrightarrow 1,
\end{align*}
and $\don\alpha_v$ for the measure on $B^{(1)}_v$ compatible
with
\begin{align*}
1\longrightarrow B^{(1)}_v \longrightarrow B\ti_v
\stackrel\nu\longrightarrow \Q_v\ti \longrightarrow 1.
\end{align*}
To be explicit,
\begin{align*}
\volx(P\ordx_p(d_B,\vec N))=\volx(\ordx_p(d_B,\vec N)),
\end{align*}
while $\dti\alpha_\infty=\dti a\, dx\, d\theta$ in the
$PB\ti_\infty$ coordinates
\begin{align*}
\alpha_\infty=\left[\begin{mat}{cc}a&\\&1\end{mat}\right]
\left[\begin{mat}{cc}1&x\\&1\end{mat}\right]\kappa_\theta,
\hspace{20pt}\text{for \,}0\le\theta<\pi.
\end{align*}
Also
\begin{align*}
\volo(\ord^{(1)}_p(d_B,\vec N))=\volx(\ordx_p(d_B,\vec N)),
\end{align*}
while $\don\alpha_\infty=\dti a\, dx\, d\theta$ in the
$B^{(1)}_\infty$ coordinates
\begin{align*}
\alpha_\infty= \left[\begin{mat}{cc}a&\\&a^{-1}\end{mat}\right]
\left[\begin{mat}{cc}1&x\\&1\end{mat}\right]\kappa_\theta,
\hspace{20pt}\text{for \,}0\le\theta<\pi.
\end{align*}
It is well known (see \cite{vig}) that $\volx(PB\ti_\Q\lmod
PB\ti_\A)=2$ and $\volo(B^{(1)}_\Q\lmod B^{(1)}_\A)=1$.  The
difference between $\dti \alpha_\infty$ and $\don\alpha_\infty$
may be seen as follows: Lift $\dti\alpha_\infty$ to the double
cover $\R^+\lmod B_\infty\ti$ of $PB_\infty\ti$.  Then
$\tfrac12\,\dti\alpha_\infty$ is compatible with
\begin{align*}1\longrightarrow B_\infty^{(1)}\longrightarrow \R^+\lmod B_\infty\ti
\stackrel\nu\longrightarrow \R^+\lmod\R\ti \longrightarrow1.\end{align*}

The Tamagawa measures on $G=\GL_2$, $PG=\PGL_2$, $G^{(1)}=\SL_2$
are special cases of those above, for $d_B=1$.  Now define
$d\gb_v$ on $\Gb_v$ to be compatible with
\begin{align*}
1\longrightarrow\big(G^{(1)}_v\big)^3\longrightarrow \Gb_v
\stackrel\nu\longrightarrow \Q\ti_v \longrightarrow1.
\end{align*}
Explicitly,
\begin{align*}
\vol(\ordx_p(1,\vec N)^3\cap\Gb_p)= \volx(\ordx_p(1,\vec N))^3,
\end{align*}
and $d\gb_\infty=\dti a\prod_j(\dti a_j\, dx_j\, d\theta_j)$ in
the $\Gb_\infty$ coordinates
\begin{align*}
\gb_{\infty j}=\left[\begin{mat}{cc}a&\\&1\end{mat}\right]
\left[\begin{mat}{cc}a_j&\\&a_j^{-1}\end{mat}\right]
\left[\begin{mat}{cc}1&x_j\\&1\end{mat}\right]\kappa_{\theta_j},
\hspace{20pt}\text{for \,}0\le\theta_j<\pi.
\end{align*}
Also define $d\gb_v$ on $P\Gb_v$ to be compatible with
\begin{align*}
1\longrightarrow \Q_v\ti\longrightarrow \Gb_v
\longrightarrow P\Gb_v \longrightarrow1.
\end{align*}
Then
\begin{align*}
\vol(P(\ordx_p(1,\vec N)^3\cap\Gb_p))=\volx(\ordx_p(1,\vec N))^3,
\end{align*}
and $d\gb_\infty=2\prod_j(\dti a_j\, dx_j\, d\theta_j)$ in the
$P\Gb_\infty$ coordinates
\begin{gather*}
\gb_{\infty j}=\left[\begin{mat}{cc}\eps&\\&1\end{mat}\right]
\left[\begin{mat}{cc}a_j&\\&a_j^{-1}\end{mat}\right]
\left[\begin{mat}{cc}1&x_j\\&1\end{mat}\right]\kappa_{\theta_j},\\[8pt]
\text{for \,}\eps^2=1,\hspace{10pt}a_1a_2a_3>0,\hspace{10pt}
0\le\theta_j<\pi.
\end{gather*}
It is easy to check that $\vol(P\Gb_\Q\lmod P\Gb_\A)=2$.

\subsection{Dual-Pairs}
\label{wrep}

Extend the definition of $\langle\cdot\,,\cdot\rangle$ to $B^n_v$
as an orthogonal direct sum, and define $\F$, $d\alpha_v$ on
$B^n_v$ as before. We may write $\F_j$ for the Fourier transform
in $\beta_j$ alone, $\beta=(\beta_j)\in B^n_v$. The Weil
representation $\omega$ of ${\rm Sp}_{2n}(\Q_v)$ on $\Sz(B^n_v)$
is uniquely determined by: \cite{jl,sz}
\begin{alignat*}{2}
\omega\left(\left[\begin{mat}{cc} 1_n & U \\  & 1_n
\end{mat}\right]\right) \vp(\beta)&=\e_v(\hf\langle
U\beta,\beta\rangle)\,\vp(\beta)&\hspace{20pt}
&\text{for \,}U=U^t\in\M_n(\Q_v),\\
\omega\left(\left[\begin{mat}{cc} A^t & \\ & A^{-1}
\end{mat}\right]\right) \vp(\beta)&=|\det A|_v^2\, \vp(A
\beta)&&\text{for \,}A\in \GL_n(\Q_v),\\
\omega\left(\left[\begin{mat}{cc} & 1_n \\ -1_n &
\end{mat}\right]\right)\vp(\beta)&= (-1)^{n\ind_{v\mid d_B}}\,
\F\vp(\beta).
\end{alignat*}

Now consider the similitude dual-pairs
\begin{align*}
R(X,Y)&=\{(x,y)\in X\times Y\,;\,\nu(x)=\nu(y)\},\\
\text{for \,}(n,X,Y)&=(1,G,H'),\,(3,\Gb,\Gb'),\,(3,H,H').
\end{align*}
We extend the definition of $\omega$ to each of these as follows:
Let $L$ denote the unitary left regular representation of $Y_v$ on
$\Sz(B^n_v)$,
\begin{align*}
L(h')\,\vp(\beta)&=
|\nu(h')|^{-1}_v\,\vp\left((h')^{-1}\beta\right),\\
L(\gb')\,\vpb(\betab)&=
|\nu(\gb')|^{-3}_v\,\vpb\left((h'_j)^{-1}\beta_j\right),\\
L(h')\,\vpb(\betab)&=
|\nu(h')|^{-3}_v\,\vpb\left((h')^{-1}\beta_j\right).
\end{align*}
If $\nu(x,y)=\delta$, set $\alpha=\left[\begin{mat}{cc}1_n&\\
&\delta\,1_n\end{mat}\right]\in X_v$, $x^{(1)}=x\alpha^{-1}$,
${}^{(1)}\!x=\alpha^{-1}x$. Then
\begin{align*}
\omega(x,y)=\omega(x^{(1)})\,L(y)=L(y)\,\omega({}^{(1)}\!x)
\end{align*}
defines a representation of $R(X,Y)_v$ on $\Sz(B^n_v)$.

Note $\omega(z,z)=1$ for $z\in\Q_v\ti$, since
$\omega\left(\left[\begin{mat}{cc}z\,1_n&\\&z^{-1}\,1_n\end{mat}\right]\right)
= L(z)^{-1}.$

\section{Jacquet--Langlands Correspondence}
\label{jls} In this section we explicitly compute the Shimizu
theta lift and its adjoint, which realize the
Jacquet--Langlands correspondence.

\subsection{Shimizu's Theta Lift}

For $\dot\Psi,\ddot\Psi$ as in \S\ref{autf} and
$\eta\in\prod_v(\Z/2\Z)$, define $\breve F'\in L^2_0(\breve
H'_\Q\lmod\breve H'_\A,\tilde\chi)$ by
\begin{align*}
\breve F'(\rho(\dot\beta,\ddot\beta))=\dot\Psi^\eta(\dot\beta)\,
\ov{\mbox{$\displaystyle\ddot\Psi^\eta$}(\ddot\beta)},
\end{align*}
with matching local data $\breve\vp\in\Sz(B_\A)$ as in
\S\ref{locdata}. Shimizu's theta lift is defined in \cite{sz}:
For $g\in G^{(\delta)}_\A$,
\begin{align*}
\breve\Theta'_{\breve\vp}(\breve F')(g)&=\tfrac12\int_{\breve
H_\Q^{\prime(1)}\lmod \breve H_\A^{\prime(\delta)}}\
\sum_{\alpha\in B_\Q} \omega(g,h')\breve\vp(\alpha)\,\breve
F'(h')\,dh'\\&=\tfrac12\int_{PB_\Q\ti\lmod PB_\A\ti
}\int_{B_\Q^{(1)}\lmod B_\A^{(\delta)}} \sum_{\alpha\in B_\Q}
\omega(g,\rho(\dot\beta\ddot\beta,\ddot\beta)) \breve\vp(\alpha)
\\&\hspace{30pt}
\cdot\,\dot\Psi^\eta(\dot\beta\ddot\beta)
\ov{\mbox{$\displaystyle\ddot\Psi^\eta$}(\ddot\beta)}
\,\don\dot\beta\, \dti\ddot\beta\\&=\tfrac12\int_{PB_\Q\ti\lmod
PB_\A\ti }\int_{B_\Q^{(1)}\lmod B_\A^{(\delta)}} \sum_{\alpha\in
B_\Q} |\delta|_\A^{-1}\omega({}^{(1)}\!g)
\breve\vp((\dot\beta\ddot\beta)^{-1}\alpha\ddot\beta)
\\&\hspace{30pt}
\cdot\,\dot\Psi^\eta(\dot\beta\ddot\beta)
\ov{\mbox{$\displaystyle\ddot\Psi^\eta$}(\ddot\beta)}
\,\don\dot\beta\,\dti\ddot\beta.
\end{align*}
This expression is readily converted into a Fourier series,
following \cite{sz}. First, decompose the sum over $B_\Q$ into
left $B_\Q^{(1)}$ orbits. Note that $B_\Q^{(1)}\lmod
B_\Q\ti\simeq\Q\ti$ by $\nu$, and if $B_\Q$ is a division algebra,
$B_\Q^{(0)}=\{0\}$. In the unramified case $B=M$\,, the remaining
elements are those $\alpha\in M_\Q$ of rank 1, and every such can
be written as $\alpha=\gamma^{-1}\tilde\alpha,$ for
representatives of unique classes $\tilde\alpha\in\Prj
\left[\begin{mat}{cc}\Q&\Q\\0&0\end{mat}\right]$, $\gamma\in
N_\Q\lmod{\rm SL}_2(\Q)$.

The contribution to $\breve\Theta'_{\breve\vp}(\breve F')$ from
$\alpha=0$ is proportional to
\begin{align*}
\int_{PB_\Q\ti\lmod PB_\A\ti} \left(\int_{B_\Q^{(1)}\lmod
B_\A^{(\delta)}} \dot\Psi^\eta(\dot\beta\ddot\beta)
\,\don\dot\beta\right)\ov{\mbox{$\displaystyle\ddot\Psi^\eta$}(\ddot\beta)}
\,\dti\ddot\beta\quad=\quad0,
\end{align*}
since $\dot\pi$ is not one-dimensional, by the argument in
\cite{sz}. For \hbox{$B=M$} (which is not considered in
\cite{sz}, but see \cite{gs}), the contribution from all
$\alpha\in M_{\Q}^{(0)}\setminus\{0\}$ equals
\begin{align*}
\lefteqn{\tfrac12\int_{PG_\Q\lmod PG_\A} \int_{G_\Q^{(1)}\lmod
G_\A^{(\delta)}} \ \sum_{\tilde\alpha}\ \sum_{\gamma\in N_\Q\lmod
G_\Q^{(1)}}}\hspace{20pt}&\\
&\hspace{40pt} |\delta|_\A^{-1}\omega({}^{(1)}\!g)
\breve\vp((\gamma\dot\beta
\ddot\beta)^{-1}\tilde\alpha\ddot\beta)\, \dot\Psi^\eta(\dot\beta
\ddot\beta)\ov{\mbox{$\displaystyle\ddot\Psi^\eta$}(\ddot\beta)}
\,\don\dot\beta\,\dti\ddot\beta\displaybreak[0]\\
&=\sum_{\tilde\alpha}\, \tfrac12\int_{PG_\Q\lmod PG_\A}
\int_{N_\Q\lmod G_\A^{(\delta)}}\\
&\hspace{40pt}|\delta|_\A^{-1}\omega({}^{(1)}\!g) \breve\vp((\dot\beta
\ddot\beta)^{-1}\tilde\alpha\ddot\beta)\, \dot\Psi^\eta(\dot\beta
\ddot\beta)\ov{\mbox{$\displaystyle\ddot\Psi^\eta$}(\ddot\beta)}
\,\don\dot\beta\,\dti\ddot\beta\\
&=\sum_{\tilde\alpha}\,
\tfrac12\int_{PG_\Q\lmod PG_\A} \int_{N_\A\lmod
G_\A^{(\delta)}}|\delta|_\A^{-1}\omega({}^{(1)}\!g)
\breve\vp((\dot\beta \ddot\beta)^{-1} \tilde\alpha\ddot\beta)\\
&\hspace{40pt}\cdot\left(\int_{N_\Q\lmod N_\A}
\dot\Psi^\eta(n\dot\beta\ddot\beta)
\,dn\right)\ov{\mbox{$\displaystyle\ddot\Psi^\eta$}(\ddot\beta)}
\,\don\dot\beta\,\dti\ddot\beta\quad=\quad0,
\end{align*}
since $\dot\pi$ is cuspidal. Therefore in all cases under
consideration,
\begin{align*}
\breve\Theta'_{\breve\vp}(\breve F')(g)={}&
\tfrac12\int_{PB_\Q\ti\lmod PB_\A\ti }\int_{B_\Q^{(1)}\lmod
B_\A^{(\delta)}}\sum_{\tilde\alpha\in B_\Q^{(1)}\lmod B_\Q\ti}\
\sum_{\gamma\in B_\Q^{(1)}}\\
&\hspace{30pt}\omega(g,\rho(\gamma\dot\beta\ddot\beta,\ddot\beta))
\breve\vp(\tilde\alpha)\,\dot\Psi^\eta(\dot\beta
\ddot\beta)\ov{\mbox{$\displaystyle\ddot\Psi^\eta$}(\ddot\beta)}
\,\don\dot\beta\,\dti\ddot\beta\\
={}&\sum_{\tilde\alpha\in B_\Q^{(1)}\lmod B_\Q\ti}
\tfrac12\int_{PB_\Q\ti\lmod PB_\A\ti} \int_{B_\A^{(\delta)}}\\
&\hspace{30pt} \omega(g,\rho(\dot\beta
\ddot\beta,\ddot\beta)) \breve\vp(\tilde\alpha)\,
\dot\Psi^\eta(\dot\beta \ddot\beta)
\ov{\mbox{$\displaystyle\ddot\Psi^\eta$}(\ddot\beta)}
\,\don\dot\beta\,\dti\ddot\beta\\
={}&\sum_{\tilde\alpha\in B_\Q^{(1)}\lmod B_\Q\ti}
\tfrac12\int_{PB_\Q\ti\lmod PB_\A\ti} \int_{B_\A^{(\delta)}}
\omega\left(\left[\begin{mat}{cc}\nu(\tilde\alpha)&\\&1\end{mat}\right],
\,\rho(\tilde\alpha^\iota,1)\right)\circ\\
&\hspace{30pt}\omega(g,\rho(\dot\beta \ddot\beta,\ddot\beta))
\breve\vp(1)\, \dot\Psi^\eta(\dot\beta
\ddot\beta)\ov{\mbox{$\displaystyle\ddot\Psi^\eta$}(\ddot\beta)}
\,\don\dot\beta\,\dti\ddot\beta\\
={}&\sum_{\xi\in\Q\ti}\widehat{\breve\Theta'_{\breve\vp}(\breve
F')}\left( \left[\begin{mat}{cc}\xi&\\&1\end{mat}\right]
g\right),\quad\text{where}\\[8pt]
\widehat{\breve\Theta'_{\breve\vp}(\breve F')}(g)
:={}&\tfrac12\int_{PB_\Q\ti\lmod PB_\A\ti} \int_{B_\A^{(\delta)}}
\omega(g,\rho(\dot\beta \ddot\beta,\ddot\beta))\breve\vp(1)\\
&\hspace{30pt}\cdot\dot\Psi^\eta(\dot\beta\ddot\beta)
\ov{\mbox{$\displaystyle\ddot\Psi^\eta$}(\ddot\beta)}
\,\don\dot\beta\,\dti\ddot\beta\\
={}&\tfrac12\int_{PB_\Q\ti\lmod PB_\A\ti} \int_{B_\A^{(\delta)}}
\omega(g,\rho(\ddot\beta\dot\beta ,\ddot\beta))\breve\vp(1)\\
&\hspace{30pt}\cdot\dot\Psi^\eta(\ddot\beta\dot\beta)
\ov{\mbox{$\displaystyle\ddot\Psi^\eta$}(\ddot\beta)}
\,\don\dot\beta\,\dti\ddot\beta\\
={}&\tfrac12\int_{PB_\Q\ti\lmod PB_\A\ti}\tilde\Psi^\eta(g,\beta)
\ov{\mbox{$\displaystyle\ddot\Psi^\eta$}(\beta)}\,\dti\beta,\\[8pt]
\tilde\Psi^\eta(g,\beta):={}&\int_{B_\A^{(\delta)}}\omega(g,\rho
(\alpha,1))\breve\vp(1)\,\dot\Psi^\eta(\beta\alpha)\,\don\alpha.
\end{align*}

At this point our proof diverges from \cite{sz}.  Using the
calculations in \S\ref{locdata}, we realize the last integral as
the action of Hecke operators on $\dot\Psi^\eta$ and show
\begin{align*}
\tilde\Psi^\eta(g,\beta)=\widehat{F}(g)\,\dot\Psi^\eta(\beta),
\end{align*}
where $F$ is the unique Hecke normalized $\Hal^\star$ eigenform
\begin{gather*}
F\in\tilde S_{|k|}(1,(1,d_B N),\tilde\chi,N^\flat)\cap\pi^{\rm
JL}(\dot\pi)
\hspace{20pt}\text{s.t.}\\[2pt]
R_p(1,d_BN)\,F=\dot\veps_p\,F\hspace{20pt}\text{for all \,}
p\mid N^\flat,\\[-6pt]
\intertext{and so\vspace{-6pt}}
\breve\Theta'_{\breve\vp}(\breve F')= \langle
\dot\Psi^\eta,\ddot\Psi^\eta\rangle\,F = \langle
\dot\Psi,\ddot\Psi\rangle\,F.
\end{gather*}

It is clear from the definition of $\omega$ that
\begin{align*}
\tilde\Psi^\eta\left(\left[\begin{mat}{cc}1&x\\&1\end{mat}\right]
g,\beta\right)=\e_\A(x)\,\tilde\Psi^\eta(g,\beta)\hspace{20pt}
\text{for \,}x\in\A,
\end{align*}
and since $\omega(z,\rho(z,1))=1$,
\begin{align*}
\tilde\Psi^\eta(zg,\beta)=\tilde\chi(z)\,\tilde\Psi^\eta(g,\beta)
\hspace{20pt}\text{for \,}z\in\A\ti.
\end{align*}
Furthermore, the right $K_v$-types of $\tilde\Psi^\eta(g,\beta)$
are determined by Lemma \ref{ktype} in \S\ref{locdata}. The
remainder of our calculation consists identifying the Whittaker
functions from \S\ref{whit} (with $M=d_BN$, $M^\flat=N^\flat$,
$M^\sharp=d_BN^\sharp$, $M^\chi=N^\chi$).

If $p\nmid d_BN$, it suffices to consider
$g=\left[\begin{mat}{cc}a&\\&1\end{mat}\right]$\,:
\begin{gather*}
\omega\left(g,\rho(\alpha,1)\right)\breve\vp_p(1)=
|a|_p\,\breve\vp_p(\alpha^\iota)= |a|_p\,\vp_p(\alpha),\\[8pt]
\begin{aligned}
\tilde\Psi_p^\eta\left(g,\beta\right)&=
|a|_p\int_{B_p^{(a)}}\vp_p(\alpha)\,
\dot\Psi_p^\eta(\beta\alpha)\,d_p^{(1)}\alpha\\&=
|a|_p\int_{B_p^{[a]}}\vp_p(\alpha)\,
\dot\Psi_p^\eta(\beta\alpha)\,\dpt\alpha\\
&=|a|_p\,T_p^{[a]}\dot\Psi_p^\eta(\beta)\\
&=|a|_p^{\frac12}\left( \tfrac{|ap|_p^{\dot s_p}-|ap|_p^{\ddot
s_p}} {|p|_p^{\dot s_p}-|p|_p^{\ddot
s_p}}\right)\ind_{\Z_p}(a)\,
\dot\Psi_p^\eta(\beta)\\
&=W_p^0(g)\,\dot\Psi_p^\eta(\beta).
\end{aligned}
\end{gather*}

If $p\mid N^\flat$, we consider $g$ as before and also
$g'=\left[\begin{mat}{cc}&a\\-1&\end{mat}\right]$\,:
\begin{gather*}
\omega(g,\rho(\alpha,1))\breve\vp_p(1)=
|a|_p\,\breve\vp_p(\alpha^\iota)=|a|_p\,\vp_p(\alpha),
\displaybreak[0]\\[8pt]
\begin{aligned}
\tilde\Psi_p^\eta(g,\beta)&= |a|_p\,
T_p^{[a]}\,\dot\Psi_p^\eta(\beta)\\
&=|a|_p^{\frac12}\left( \tfrac{|ap|_p^{\dot s_p}-|ap|_p^{\ddot
s_p}} {|p|_p^{\dot s_p}-|p|_p^{\ddot s_p}}\right)\ind_{\Z_p}(a)\,
\dot\Psi_p^\eta(\beta)\\
&\hspace{40pt}+\big|\tfrac{a}p\big|_p^{\frac12}
\left(\tfrac{|a|_p^{\dot s_p}-|a|_p^{\ddot s_p}}
{|p|_p^{\dot s_p}-|p|_p^{\ddot s_p}}\right)\ind_{\Z_p}
\big(\tfrac ap\big)\,R_p(\vec N)\dot\Psi_p^\eta(\beta)\\
&=\big(\dot W_p^0(g)+\ov{\dot\veps_p}\,\ddot W_p^0(g)\big)
\dot\Psi_p^\eta(\beta),
\end{aligned}
\displaybreak[0]\\[20pt]
\omega(g',\rho(\alpha,1))\breve\vp_p(1)=
|a|_p\,\F\breve\vp_p(\alpha^\iota) =|a|_p\,\F\vp_p(\alpha),
\displaybreak[0]\\[8pt]
\begin{aligned}
\tilde\Psi_p^\eta(g',\beta)&=|a|_p\int_{B_p^{[a]}}\F\vp_p(\alpha)\,
\dot\Psi_p^\eta(\beta\alpha)\,\dpt\alpha\\
&=|ap|_p\,R_p(\vec N)^{-1}\,T_p^{[ap]}\, \dot\Psi_p^\eta(\beta)\\
&=|ap|_p^{\frac12}\left( \tfrac{|ap^2|_p^{\dot
s_p}-|ap^2|_p^{\ddot s_p}} {|p|_p^{\dot s_p}-|p|_p^{\ddot
s_p}}\right)\ind_{\Z_p}(ap)\, R_p(\vec N)^{-1}
\dot\Psi_p^\eta(\beta)\\
&\hspace{40pt}+ |a|_p^{\frac12}\left( \tfrac{|ap|_p^{\dot
s_p}-|ap|_p^{\ddot s_p}} {|p|_p^{\dot s_p}-|p|_p^{\ddot
s_p}}\right)\ind_{\Z_p}(a)\, \dot\Psi_p^\eta(\beta)\\ &=
\big(\dot W_p^0(g')+\ov{\dot\veps_p}\,\ddot W_p^0(g')\big)
\dot\Psi_p^\eta(\beta).
\end{aligned}
\end{gather*}

If $p\mid N^\sharp$,
\begin{gather*}
\omega(g,\rho(\alpha,1))\breve\vp_p(1)=
|a|_p\,\breve\vp_p(\alpha^\iota)=|a|_p\,\vp_p(\alpha),
\displaybreak[0]\\[8pt]
\begin{aligned}
\tilde\Psi_p^\eta(g,\beta)&= |a|_p\,
T_p^{[a]}\,\dot\Psi_p^\eta(\beta)\\ &= |a|_p^{1+\ic
t_p}\,\ind_{\Z_p}(a)\, \dot\Psi_p^\eta(\beta)\\
&=W_p^\sharp(g)\,\dot\Psi_p^\eta(\beta),
\end{aligned}
\displaybreak[0]\\[20pt]
\omega(g',\rho(\alpha,1))\breve\vp_p(1)=
|a|_p\,\F\breve\vp_p(\alpha^\iota)=|a|_p\,\F\vp_p(\alpha),
\displaybreak[0]\\[8pt]
\begin{aligned}
\tilde\Psi_p^\eta(g',\beta)&=|ap|_p\,R_p(\vec
N)^{-1}\,T_p^{[ap]}\,\dot\Psi_p^\eta(\beta)\\
&=-|a|_p^{\ic t_p}\,|ap|_p\,\ind_{\Z_p}(ap)\,
\dot\Psi_p^\eta(\beta)\\
&=W_p^\sharp(g')\,\dot\Psi_p^\eta(\beta).
\end{aligned}
\end{gather*}

If $p\mid N^\chi$,
\begin{gather*}
\omega(g,\rho(\alpha,1))\breve\vp_p(1)=
|a|_p\,\breve\vp_p(\alpha^\iota)=
|a|_p\,\vp_p(\iota^{1-\eta_p}\alpha),
\displaybreak[0]\\[8pt]
\begin{aligned}
\tilde\Psi_p^\eta(g,\beta)&= |a|_p\,\tilde\chi_p(a)\,
\ind_{\Z_p}(a)\, \big(T_p^\chi(a,1)\,
\dot\Psi_p\big)^{\eta_p}(\beta)\\
&=\tilde\chi_p(a)\,|a|_p^{\frac12+\ic t_p}\,
\ind_{\Z_p}(a)\,\dot\Psi_p^\eta(\beta)\\
&=W_p^\chi(g)\,\dot\Psi_p^\eta(\beta),
\end{aligned}
\displaybreak[0]\\[20pt]
\omega(g',\rho(\alpha,1))\breve\vp_p(1)=
|a|_p\,\F\breve\vp_p(\alpha^\iota)=
|a|_p\,\F\vp_p(\iota^{1-\eta_p}\alpha),
\displaybreak[0]\\[8pt]
\begin{aligned}
\tilde\Psi_p^\eta(g',\beta)&=|ap^2|_p\,\ov{\gs_p}\,
\tilde\chi_p(-1)\,\ind_{\Z_p}(ap)\,
\big(T_p^\chi(p^{-1},ap)\,\dot\Psi_p\big)^{\eta_p}(\beta),\\
&=\ov{\gs_p}\,\tilde\chi_p(-1)\, |ap^2|_p^{\frac12-\ic
t_p}\,\ind_{\Z_p}(ap)\, \dot\Psi_p^\eta(\beta)\\
&=W_p^\chi(g')\,\dot\Psi_p^\eta(\beta).
\end{aligned}
\end{gather*}

If $p\mid d_B$,
\begin{gather*}
\omega(g,\rho(\alpha,1))\breve\vp_p(1)=
|a|_p\,\breve\vp_p(\alpha^\iota)=|a|_p\,\vp_p(\alpha),
\displaybreak[0]\\[8pt]
\begin{aligned}
\tilde\Psi_p^\eta(g,\beta)&=|a|_p
\,T_p^{[a]}\,\dot\Psi_p^\eta(\beta)\\
&=|a|_p^{1+\ic t_p}\,\ind_{\Z_p}(a)\,
\dot\Psi_p^\eta(\beta)\\
&=W_p^\sharp(g)\,\dot\Psi_p^\eta(\beta),
\end{aligned}
\displaybreak[0]\\[20pt]
\omega(g',\rho(\alpha,1))\breve\vp_p(1)=
-|a|_p\,\F\breve\vp_p(\alpha^\iota)=
-|a|_p\,\F\vp_p(\alpha),
\displaybreak[0]\\[8pt]
\begin{aligned}
\tilde\Psi_p^\eta(g',\beta)&=-|ap|_p
\,T_p(\varpi)^{-1}\,T_p^{[ap]}\,\dot\Psi_p^\eta(\beta)\\
&=-|a|_p^{\ic t_p}\,|ap|_p\,
\ind_{\Z_p}(ap)\,\dot\Psi_p^\eta(\beta)\\
&= W_p^\sharp(g')\,\dot\Psi_p^\eta(\beta).
\end{aligned}
\end{gather*}

If $v=\infty$, we consider $g^\pm= \left[\begin{mat}{cc}\pm
a&\\&1\end{mat}\right]$, $a\in\R^+$\,:
\begin{gather*}
\omega(g^\pm,\rho(\alpha,1))\breve\vp_\infty(1)=
a\,\breve\vp_\infty(\alpha^\iota),
\displaybreak[0]\\[8pt]
\begin{aligned}
\tilde\Psi_\infty^\eta(g^+,\beta)&=\int_{B_\infty^{(a)}}
a^{1+\frac{|k|}2}\,\phi_{X_\infty}^{-\breve k,a}
(\alpha^\iota)\,\Psi_\infty^\eta(\beta\alpha)\,d_\infty^{(1)}\alpha\\
&=\int_{\R^+\lmod B_\infty^+} a^{1+\frac{|k|}2}\,
\phi_{X_\infty}^{\breve k,a}(\alpha)\,
\Psi_\infty^\eta(\beta\alpha)\,\tfrac12\dit\alpha\\
&=\tfrac12\,a^{1+\frac{|k|}2}\,T_{X_\infty}^{\breve
k,a}\,\dot\Psi_\infty^\eta(\beta)\\
&=W_\infty^{|k|}(g^+)\,\dot\Psi_\infty^\eta(\beta),\\[8pt]
\tilde\Psi_\infty^\eta(g^-,\beta)&=\tfrac12\,
a^{1+\frac{|k|}2}\,T_\infty^-\,T_{Y_\infty}^{\breve
k,a}\,\Psi_\infty^\eta(\beta)\\
&=W_\infty^{|k|}(g^-)\, \dot\Psi_\infty^\eta(\beta).
\end{aligned}
\end{gather*}

\subsection{Adjoint of Shimizu's Lift}
\label{adj}

Let $\Psi=\dot\Psi=\ddot\Psi$, $F$ be as in \S\ref{jls}, and for
$h'\in H^{\prime(\delta)}_\A$ define
\begin{align*}
\Theta_\vp\big(\ov{F}\big)(h')=
\int_{G^{(1)}_\Q\lmod G^{(\delta)}_\A} \sum_{\alpha\in B_\Q}
\omega(g,h')\vp(\alpha)\,\ov{F(g)}\,dg.
\end{align*}
\begin{thm}\label{adjshim}
\begin{gather*}
\Theta_\vp\big(\ov{F}\big)=\tfrac{\|F\|^2}{\|\Psi\|^2}\,\ov{F'}
\in L^2_0(H'_\Q\lmod H'_\A,\ov{\tilde\chi}),\\[2pt]
F'(\rho(\dot\beta,\ddot\beta)\iota^\eta)=\Psi^\eta(\dot\beta)\,
\ov{\mbox{$\displaystyle\Psi^\eta$}(\ddot\beta)}.
\end{gather*}
\end{thm}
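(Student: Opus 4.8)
\noindent\emph{Strategy of proof.}\quad The plan is to exploit that $\Theta_\vp$ and Shimizu's lift $\breve\Theta'_{\breve\vp}$ are built from the \emph{same} theta kernel $\sum_{\alpha\in B_\Q}\omega(g,h')\vp(\alpha)$---integrated against the cusp form on $G$ in the first case and against the form on $\breve H'$ in the second---so that the two maps are formal adjoints up to the normalization constants recorded in \S\ref{tmeas}, the adjointness being nothing but Fubini. Combined with multiplicity one for $B\ti_\A$ (equivalently for $\breve H'_\A=(B\ti\times B\ti)/\Q\ti$), this will reduce everything to the identity $\breve\Theta'_{\breve\vp}(\breve F')=\langle\Psi,\Psi\rangle\,F$ already established in \S\ref{jls}.

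First I would check that $\Theta_\vp(\ov F)$ genuinely lies in $L^2_0(H'_\Q\lmod H'_\A,\ov{\tilde\chi})$: left $H'_\Q$-invariance is immediate since $h'\mapsto\omega(1,h')$ merely permutes the summation index $\alpha$ through $B_\Q$; the central character is $\ov{\tilde\chi}$ because $\omega(z,\rho(z,1))=1$ and $\ov F$ has central character $\ov{\tilde\chi}$; convergence and moderate growth follow from the rapid decay of the cusp form $F$; and cuspidality is automatic when $B$ is a division algebra ($H'$ being then essentially anisotropic), while in the split case $B=M$ one checks the vanishing of the relevant constant term exactly as in the $\alpha=0$ and rank-$1$ orbit computations of \S\ref{jls} (or invokes the tower property as in \cite{hk1}).

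Next I would identify $\Theta_\vp(\ov F)$ up to a scalar. Theta lifting carries $F\in\pi^{\rm JL}(\dot\pi)$ to the representation of $\breve H'_\A$ which under the parametrization $\rho$ is $\dot\pi\otimes\ov{\dot\pi}$, extended across $\langle\iota\rangle$; so if $\Theta_\vp(\ov F)\neq0$ it lies in a single isotypic component, and its right $K_v$-types together with its behaviour under $\Hal_p^\star$ for $p\mid N^\flat$ are forced by the local computations of \S\ref{jls}--\S\ref{locdata} to be those of $\ov{F'}$. By multiplicity one the subspace of $L^2_0(H'_\Q\lmod H'_\A,\ov{\tilde\chi})$ with these properties is one-dimensional, spanned by $\ov{F'}$, so in any case $\Theta_\vp(\ov F)=c\,\ov{F'}$ for some scalar $c$.

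Finally I would pin down $c$ by evaluating $\langle\Theta_\vp(\ov F),\ov{F'}\rangle_{H'}$ in two ways. Directly it equals $c\,\|F'\|^2$, and $\|F'\|^2$ is computed from $F'(\rho(\dot\beta,\ddot\beta)\iota^\eta)=\Psi^\eta(\dot\beta)\,\ov{\Psi^\eta(\ddot\beta)}$ via the parametrization $\rho\colon PB\ti\times PB\ti\isomto P\breve H'$, the volume $\volx(PB\ti_\Q\lmod PB\ti_\A)=2$, and the $\tfrac12$ and the $\langle\iota\rangle$-doubling in the $H'$ inner product, giving $\|F'\|^2=\|\Psi\|^4$. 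Alternatively, unfolding the definition of $\Theta_\vp(\ov F)$ and interchanging the $g$- and $h'$-integrals turns the inner integral into $\breve\Theta'_{\breve\vp}(\breve F')(g)$ (this is the step where the matching $\vp\leftrightarrow\breve\vp$ of \S\ref{locdata} and the restriction $\ov{F'}|_{\breve H'}\leftrightarrow\breve F'$ must be used consistently), so that the pairing equals $\langle F,\breve\Theta'_{\breve\vp}(\breve F')\rangle_G=\|\Psi\|^2\langle F,F\rangle=\|\Psi\|^2\|F\|^2$; equating the two values gives $c=\|F\|^2/\|\Psi\|^2$, which in particular shows the lift is nonzero. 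The main obstacle I anticipate is precisely the bookkeeping in this last step: reconciling the two differently normalized inner products, the Tamagawa measures of \S\ref{tmeas}, the sign $(-1)^{n\,\ind_{v\mid d_B}}$ in $\omega$, and the $\langle\iota\rangle$-doubling, so that Fubini yields exactly $\breve\Theta'_{\breve\vp}(\breve F')$ rather than a twisted variant---which is really the content of the matching local data chosen in \S\ref{locdata}---together with making the cuspidality and $L^2$-membership argument airtight in the split case.
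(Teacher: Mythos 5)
Your strategy matches the paper's proof of Theorem~\ref{adjshim} in all essentials: (i) identify the image up to a scalar using the local $K_v$-type computations (Lemma~\ref{ktype}), the Hecke intertwining of the Weil representation (Lemma~\ref{weilhop}), strong multiplicity one for $B^\times_\A$ and Lemma~\ref{basis}; and (ii) pin down the scalar by a Fubini/adjointness identity that converts the pairing back to $\langle\breve\Theta'_{\breve\vp}(\breve F'),F\rangle_{PG}$ and then invokes $\breve\Theta'_{\breve\vp}(\breve F')=\langle\Psi,\Psi\rangle F$ from~\S\ref{jls}.

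The one genuine variation is that you propose pairing $\Theta_\vp(\ov F)$ against $\ov{F'}$ over the \emph{full} group $PH'$, which forces you to compute $\|F'\|^2_{PH'}$ and to carry the $\langle\iota\rangle_\A$-doubling and the matching $\vp\leftrightarrow\breve\vp$ through the unfolding. The paper avoids all of that by restricting to the connected component from the outset: it pairs over $P\breve H'$ against $\breve F'$ directly, so the adjointness lemma is a clean Fubini on $\breve H^{\prime(1)}_\Q\lmod\breve H^{\prime(\delta)}_\A$ with no component-sum and no $\iota$-twist to reconcile. Your assertion $\|F'\|^2=\|\Psi\|^4$ is exactly the kind of normalization claim this route obliges you to verify carefully (the factors of $\tfrac12$ in the $B^\times$ and $H'$ inner products, $\volx(PB^\times_\Q\lmod PB^\times_\A)=2$, and the adelic $\iota$-component-counting do not cancel for free), whereas in the paper's route the constant falls out of the symmetric unfolded expression without ever writing down $\|F'\|^2$. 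Your version is sound in principle and you correctly flag the bookkeeping as the obstacle; the paper's formulation simply sidesteps it.
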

\begin{proof}
Since the theta kernel is smooth, automorphic, and has moderate
growth, it follows from Lemma \ref{ktype} that
\begin{align*}
\Theta_\vp\big(\ov{F}\big)(\rho(\dot\beta,\ddot\beta)\iota^\eta)
&=\Theta_{\breve\vp}\big(\ov{F}\big)(\rho(\dot\beta,\ddot\beta))\\
&\in\ov{\tilde A_k^\eta(d_B,\vec N,\tilde\chi)}\times
\tilde A_k^\eta(d_B,\vec N,\tilde\chi).
\end{align*}
Furthermore by Lemma \ref{weilhop}, for $p\nmid d_BN$,
\begin{align*}
\Theta_{\breve\vp}\big(T_p^{[p]}\,\ov{F}\big)=
\dot T_p^{[p]}\,\Theta_{\breve\vp}\big(\ov{F}\big)=
\big(\ddot T_p^{[p]}\big)^\vee\,\Theta_{\breve\vp}\big(\ov{F}\big),
\end{align*}
and hence by strong multiplicity-one on $B_\A\ti$,
\begin{align*}
\Theta_{\breve\vp}\big(\ov{F}\big)\circ\rho
\in \ov\pi\times\ov\pi^\vee = \ov\pi\times\pi.
\end{align*} Thus
by Lemma \ref{basis}, $\Theta_{\breve\vp}\big(\ov{F}\big)$ is
determined up to scaling.  We compute its normalization using
the adjoint identity below (compare with the similitude see-saw
identity in \cite{hk2}).\linebreak
\end{proof}

\begin{lem}
\begin{align*}
\Big\langle \breve\Theta'_{\breve\vp}\big(\breve F'\big),
F\Big\rangle_{PG}= \Big\langle \breve F',
\ov{\Theta_{\breve\vp}\big(\ov{F}\big)} \Big\rangle_{P\breve H'}.
\end{align*}
\end{lem}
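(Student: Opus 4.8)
The identity is a Fubini/unfolding computation built on the explicit formulas in \S\ref{jls}. I would start from the right-hand side, writing $\ov{\Theta_{\breve\vp}(\ov F)}$ as the complex-conjugate of the theta integral over $G^{(1)}_\Q\lmod G^{(\delta)}_\A$ defining it, so that
\begin{align*}
\Big\langle\breve F',\ov{\Theta_{\breve\vp}(\ov F)}\Big\rangle_{P\breve H'}
=\int_{P\breve H'_\Q\lmod P\breve H'_\A}\breve F'(h')
\int_{G^{(1)}_\Q\lmod G^{(\delta)}_\A}\sum_{\alpha\in B_\Q}
\ov{\omega(g,h')\vp(\alpha)}\,F(g)\,dg\,dh'.
\end{align*}
The plan is to interchange the two integrals (justified by absolute convergence: $F$ and $\breve F'$ are cuspidal, hence rapidly decreasing, and the theta series is of moderate growth on the relevant domains), and then recognize the inner $h'$-integral. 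For fixed $g$, the function $h'\mapsto\sum_\alpha\omega(g,h')\vp(\alpha)$ is exactly the theta kernel whose integral against $\breve F'$ over $P\breve H'$ (with the measure-theoretic bookkeeping of \S\ref{tmeas}, including the factor $\tfrac12$) produces $\breve\Theta'_{\breve\vp}(\breve F')(g)$; taking the complex conjugate turns $\ov{\omega(g,h')\vp(\alpha)}\,\breve F'(h')$-integration into $\ov{\breve\Theta'_{\breve\vp}(\breve F')(g)}$ against $\breve F'$, but since here we have $\breve F'$ (not its conjugate) paired with the conjugated kernel, the $g$-integral reassembles as $\int F(g)\,\ov{\breve\Theta'_{\breve\vp}(\breve F')(g)}\,dg=\langle\breve\Theta'_{\breve\vp}(\breve F'),F\rangle_{PG}$, which is the left-hand side. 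In other words both sides equal the triple integral
\begin{align*}
\int\int \sum_{\alpha\in B_\Q}\omega(g,h')\vp(\alpha)\,\breve F'(h')\,\ov{F(g)}\,dg\,dh'
\end{align*}
over $PG_\Q\lmod PG_\A\times P\breve H'_\Q\lmod P\breve H'_\A$, read in the two orders.

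The one genuinely delicate point is \emph{descent of the integrals to the projective quotients} and the matching of Tamagawa measures. The theta kernel $\sum_{\alpha\in B_\Q}\omega(g,h')\vp(\alpha)$ is invariant under $(g,h')\mapsto(zg,zh')$ for $z\in\A\ti$ because $\omega(z,z)=1$ (noted at the end of \S\ref{wrep}), and $F$ has central character $\tilde\chi$ while $\breve F'$ has central character $\tilde\chi$ as well, so the integrand of the double integral descends to $PG\times P\breve H'$ only after one checks the central characters cancel in the pairing — this is where the constraint $\nu(g)=\nu(h')=\delta$ along $R(G,H')$ enters, and where the normalizations $\volx(PB\ti_\Q\lmod PB\ti_\A)=2$, $\volx(PG_\Q\lmod PG_\A)=2$ from \S\ref{tmeas}, together with the $\tfrac12$ in Shimizu's lift and the absence of a $\tfrac12$ in $\Theta_\vp$, have to be reconciled so that the \emph{same} constant appears on both sides. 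I would handle this by fixing a section of $\nu$ (e.g.\ diagonal $\diag(\delta,1)$ as in \S\ref{wrep}) and writing $G^{(\delta)}_\A=G^{(1)}_\A\cdot\diag(\delta,1)$, $\breve H^{\prime(\delta)}_\A$ similarly, so that both integrals become integrals over the norm-one groups times a common $\delta\in\A\ti$; the $\delta$-dependence is absorbed by $\omega$ via the $|\delta|_\A$ factors appearing in \S\ref{wrep}, and cancels in the pairing.

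The main obstacle, then, is not any hard analysis but this careful measure/central-character accounting — making sure the factor $\tfrac12$, the volumes of the projective adelic quotients, and the $|\nu|$-twists in the definition of $\omega$ along the similitude pair all combine to give literally equal constants on the two sides, rather than constants differing by a harmless-looking factor of $2$. Everything else (absolute convergence permitting Fubini, the cuspidality needed to kill boundary terms, and the identification of the inner integrals with $\breve\Theta'_{\breve\vp}(\breve F')$ and $\Theta_{\breve\vp}(\ov F)$) follows directly from the definitions in \S\ref{jls} and \S\ref{adj} and the moderate-growth/rapid-decay properties already established. I would close by remarking, as the statement itself suggests, that this is the similitude analogue of the classical see-saw duality of Harris--Kudla \cite{hk2}, with the see-saw pair $(G,H')$ versus $(\breve H',G)$ interchanging the roles of the two theta lifts.
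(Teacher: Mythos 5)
Your proposal is correct and follows essentially the same route as the paper's proof: both sides are written as the same double integral of the theta kernel against $\breve F'(h')\,\ov{F(g)}$, unfolded in the two orders, with the work lying in the measure/central-character bookkeeping across the similitude pair. The paper's own argument is exactly this (fibre the $g$-integral over $\nu(g)=\delta$, pull out $\delta\in\Q\ti\R^+\lmod\A\ti$, and observe the resulting triple integral is manifestly symmetric under $G\leftrightarrow\breve H'$), just stated more tersely; your remarks on absolute convergence and the reconciliation of the $\tfrac12$-factors and projective-quotient volumes make explicit what the paper leaves implicit.
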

\begin{proof}
\begin{alignat*}{2}
\text{LHS\,}&{}={}&\tfrac12&\int_{G_\Q\A\ti\lmod G_\A}
\tfrac12\int_{\breve H_\Q^{\prime(1)}\lmod \breve
H_\A^{\prime(\delta)}}\ \vartheta(g,h',\breve\vp)
\breve F'(h')\ov{F(g)}\,\don h'\,\dti g\\
&{}={}&\tfrac12&\int_{G_\Q\R^+\lmod G_\A} \tfrac12\int_{\breve
H_\Q^{\prime(1)}\lmod \breve H_\A^{\prime(\delta)}}\
\vartheta(g,h',\breve\vp)\breve F'(h')\ov{F(g)}\,\don h'\,\dti g\\
&{}={}&\int_{\Q\ti\R^+\lmod\A\ti}&\int_{G_\Q^{(1)}\lmod
G_\A^{(\delta)}} \tfrac12\int_{\breve H_\Q^{\prime(1)}\lmod
\breve H_\A^{\prime(\delta)}}\ \vartheta(g,h',\breve\vp)\breve
F'(h')\ov{F(g)}\,\don h'\,\don g\,\dti\delta,\\[20pt]
&=\,\lefteqn{\text{RHS by symmetry.}}
\end{alignat*}
\end{proof}

\section{Local Data}
\label{locdata}

With the notation of \S\ref{jls}, define
$\vp,\breve\vp\in\Sz(B_\A)$ by
\begin{align*}
\vp_p(\beta)&=\tfrac1{\volx(\ordpx)}\,\ind_{\ordp}(\beta)
\begin{cases}\ 1 & \text{if }p\nmid N^\chi,\\ \tilde\chi_p(\beta_{11})\,
\ind_{\Z_p\ti}(\beta_{11}) & \text{if }p\mid N^\chi,
\end{cases}\\
\vp_\infty(\beta)&=\tfrac1\pi X^{\und k}(\beta)\,\e(iP(\beta))\,,\\[12pt]
\breve\vp_v(\beta)&=\omega(1,\iota_v^{\eta_v})\vp_v(\beta)
=\vp_v(\iota_v^{\eta_v}\beta),\hspace{20pt}\text{i.e.}\\
\breve\vp_p(\beta)&=\tfrac1{\volx(\ordpx)}\,\ind_{\ordp}(\beta)
\begin{cases}\ 1 & \text{if }p\nmid N^\chi,\\
\tilde\chi_p(\beta_{11})\, \ind_{\Z_p\ti}(\beta_{11}) & \text{if }
p\mid N^\chi,\,\eta_p=0,\\ \tilde\chi_p(\beta_{22})\,
\ind_{\Z_p\ti}(\beta_{22}) & \text{if }p\mid N^\chi,\,\eta_p=1,
\end{cases}\\
\breve\vp_\infty(\beta)&=\tfrac1\pi X^{\und{\breve k}}
(\beta)\,\e(iP(\beta)),\quad \breve k=(-1)^{\eta_\infty}k.
\end{align*}
Now we compute the Fourier transforms of these functions in each
case, and relate them to Hecke operators for $B_v\ti$.

If $p\nmid d_BN^\chi$, writing $\beta=\left[\begin{mat}{cc}
a&b\\c&d\end{mat}\right]$, $\alpha_{\vec N}=\left[\begin{mat}{cc}
&\dot N\\\ddot N&\end{mat}\right]$,
\begin{align*}
\breve\vp_p(\beta)&=\tfrac1{\volx(\ordpx)}\,\ind_{\Z_p}(a)\,\ind_{\dot
N\Z_p}(b)\, \ind_{\ddot N\Z_p}(c)\,\ind_{\Z_p}(d),\\
\breve\vp_p\big|_{B_p\ti}&=\textstyle \sum_{n\ge0}T_p^{[p^n]},\\[12pt]
\F\breve\vp_p(\beta)&=
\tfrac1{\volx(\ordpx)}\,\ind_{\Z_p}(d)\,|\dot
N|_p\ind_{\frac1{\dot N}\Z_p}(c) \,|\ddot N|_p\ind_{\frac1{\ddot
N}\Z_p}(b)\,\ind_{\Z_p}(a),\\
&=\tfrac1{\volx(\ordpx)}\,|N|_p\,\ind_{\Z_p}(a)\,
\ind_{\Z_p}(\ddot Nb) \,\ind_{\Z_p}(\dot Nc)\,\ind_{\Z_p}(d),\\
&=\begin{cases}
\breve\vp_p(\beta) & \text{if }p\nmid d_BN,\\
\tfrac{1}p\,\breve\vp_p(\alpha_{\vec N}\beta) & \text{if }p\mid
N^\flat N^\sharp,
\end{cases}\\
\F\breve\vp_p\big|_{B_p\ti}&=
\begin{cases}
\breve\vp_p\big|_{B_p\ti} & \text{if }p\nmid d_BN,\\
\tfrac{1}p\,R_p(\vec N)^{-1}\, \breve\vp_p\big|_{B_p\ti}
& \text{if }p\mid N^\flat N^\sharp.
\end{cases}
\end{align*}

If $p\mid N^\chi$, (note $\iota_p$ and $\F$ commute)
\begin{align*}
\vp_p(\beta)&=\tfrac1{\volx(\ordpx)}\,
\tilde\chi_p(a)\ind_{\Z_p\ti}(a)\,\ind_{\dot N\Z_p}(b)\,
\ind_{\ddot N\Z_p}(c)\,\ind_{\Z_p}(d)\,,\\
\vp_p\big|_{B_p\ti}&=\textstyle\sum_{n\ge0}
(\tilde\chi_p\circ\nu)\cdot\phi^{\chi\,2,2}_{T_p(1,p^n)}\,,\\[12pt]
\F\vp_p(\beta)&=\tfrac1{\volx(\ordpx)}\, \tfrac1{p^2}\,
\ov{\gs_p\,\tilde\chi_p(-d)}\,\ind_{\frac1p \Z_p\ti}(d)\,
\ind_{\frac1p\ddot N \Z_p}(c)\, \ind_{\frac1p\dot N \Z_p}(b)
\,\ind_{\Z_p}(a)\,,\\
\F\vp_p\big|_{B_p\ti}&=\tfrac1{p^2}\,\ov{\gs_p}\,
\tilde\chi_p(-1) \textstyle\sum_{n\ge0}
\phi^{\chi\,2,2}_{T_p(p^n,p^{-1})}\,.
\end{align*}

If $p\mid d_B$,\hspace{10pt} $\valr^*_p:=
\{\alpha\in\diva_p\,;\,\forall\beta\in\valr_p\,,\
\tr(\alpha\beta)\in\Z_p\}=\varpi_p^{-1}\valr_p\,,$
\begin{align*}
\breve\vp_{p}&=\tfrac1{\volx(\valr_p)}\, \ind_{\valr_p}\,,\\
\breve\vp_p\big|_{B_p\ti}&=\textstyle\sum_{n\ge0}T_p^{[p^n]}\,,\\[12pt]
-\omega(J)\breve\vp_p=
\F\breve\vp_{p}&=\tfrac1{\volx(\valr_p)}\,
\tfrac1p\,\ind_{(\valr^*_p)^\iota}
=\tfrac1{\volx(\valr_p)}\,
\tfrac1p\,\ind_{\varpi_p^{-1}\valr_p}\,,\\
\F\breve\vp_p\big|_{B_p\ti}&= \tfrac1p\,
T_p(\varpi_p)^{-1}\,\breve\vp_p\big|_{B_p\ti}\,.
\end{align*}

If $v=\infty$,
\begin{align*}
\breve\vp_\infty\big|_{B_\infty^+}&=\nu^{\frac{|k|}2}\cdot
\phi_{X_\infty}^{-\breve k,\nu},\\
T_\infty^-\,\breve\vp_\infty\big|_{B_\infty^-}&=\nu^{\frac{|k|}2}\cdot
\phi_{Y_\infty}^{-\breve k,\nu}.
\intertext{Then}
D\omega\left(\left[\begin{mat}{cc}0&1\\0&0\end{mat}\right]\right)
&=2\pi\ic\,(X\ov{X}-Y\ov{Y})\cdot\,,\\
-D\omega\left(\left[\begin{mat}{cc}0&0\\1&0\end{mat}\right]\right)
&=\tfrac{1}{2\pi\ic}\left(\ddx{X}\ddx{\ov{X}}-
\ddx{Y}\ddx{\ov{Y}}\right),\\[12pt]
D\omega(J)\,\breve\vp_\infty&= \ic|k|\,\breve\vp_\infty\\
\Longrightarrow\hspace{10pt}
\omega(\kappa_\theta)\,\breve\vp_\infty&=
e^{\ic|k|\theta}\breve\vp_\infty.
\end{align*}

\begin{lem}\label{ktype}
For $\kappa\in K_p(1,(1,d_BN))$ and $\dot\kappa,\ddot\kappa\in
K_p(d_B,\vec N)$ s.t.\
$\det\kappa=\nu(\dot\kappa)/\nu(\ddot\kappa)$\,,\linebreak
(Note $\tilde\chi_p$ has different definitions on these two
groups.)
\begin{align*}
\omega(\kappa,\rho(\dot\kappa,\ddot\kappa)) \breve\vp_p &=
\tilde\chi_p(\kappa)\,\ov{\tilde\chi_p(\iota_p^{\eta_p}\dot\kappa)}
\,\tilde\chi_p(\iota_p^{\eta_p}\ddot\kappa)\, \breve\vp_p\,,\\
\omega(\kappa_\theta,\rho(\kappa_{\dot\theta},\kappa_{\ddot\theta}))
\breve\vp_\infty &= e^{\ic(|k|\theta-\breve k\dot\theta+
\breve k\ddot\theta)}\,\breve\vp_\infty\,.
\end{align*}
\end{lem}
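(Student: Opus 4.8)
The plan is to use the similitude structure of the Weil representation to split $\omega(\kappa,\rho(\dot\kappa,\ddot\kappa))$ into a contribution from $H'=\GO(B)$ and one from the derived group of $G=\GL_2$, and then to evaluate the latter on generators of a $p$-adic congruence subgroup. The hypothesis $\det\kappa=\nu(\dot\kappa)/\nu(\ddot\kappa)$ says precisely that $(\kappa,\rho(\dot\kappa,\ddot\kappa))$ lies in the similitude dual-pair $R(G,H')$, so by the construction of $\omega$ on $R(G,H')$ in \S\ref{wrep},
\[
\omega\big(\kappa,\rho(\dot\kappa,\ddot\kappa)\big)=L\big(\rho(\dot\kappa,\ddot\kappa)\big)\circ\omega\big({}^{(1)}\kappa\big),\qquad{}^{(1)}\kappa=\diag(1,\det\kappa)^{-1}\kappa,
\]
and at the archimedean place, where $\det\kappa_\theta=1$, the analogous factorization $\omega(\kappa_\theta,\rho(\kappa_{\dot\theta},\kappa_{\ddot\theta}))=L(\rho(\kappa_{\dot\theta},\kappa_{\ddot\theta}))\circ\omega(\kappa_\theta)$. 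Since $\det\kappa\in\Z_p^\times$ and conjugation by $\diag(1,\det\kappa)$ preserves the relevant local Eichler order, ${}^{(1)}\kappa$ lies in $\ord^{(1)}_p(1,(1,d_BN))$; this is $\SL_2(\Z_p)$ when $p\nmid d_BN$, and otherwise a $\Gamma_0(p)$-type subgroup generated by the upper unipotents $n(x)=\left[\begin{smallmatrix}1&x\\0&1\end{smallmatrix}\right]$ ($x\in\Z_p$), the lower unipotents $\bar n(c)=\left[\begin{smallmatrix}1&0\\c&1\end{smallmatrix}\right]$ ($c\in p\Z_p$), and the split torus $\diag(a,a^{-1})$ ($a\in\Z_p^\times$).

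For the $L$-factor: $|\nu(\dot\kappa)/\nu(\ddot\kappa)|_p=1$ gives $L(\rho(\dot\kappa,\ddot\kappa))\breve\vp_p(\beta)=\breve\vp_p(\dot\kappa^{-1}\beta\ddot\kappa)$, and the indicator of $\ordp(d_B,\vec N)$ (or of $\valr_p$ when $p\mid d_B$) is unchanged since that order is normalised on both sides by $\ordpx(d_B,\vec N)$. When $p\mid N^\chi$ the neben-weight $\tilde\chi_p(\beta_{mm})\ind_{\Z_p^\times}(\beta_{mm})$ of $\breve\vp_p$ (with $m=1$ or $2$ according to $\eta_p$) transforms under $\beta\mapsto\dot\kappa^{-1}\beta\ddot\kappa$ by $\tilde\chi_p(\dot\kappa_{mm})^{-1}\tilde\chi_p(\ddot\kappa_{mm})$, using that $\beta\mapsto[\beta_{mm}]$ is a ring homomorphism on the Eichler order (like $r_p$) and that diagonal entries of units lie in $\Z_p^\times$; since $(\dot\kappa^\iota)_{11}=\dot\kappa_{22}$ this equals $\ov{\tilde\chi_p(\iota_p^{\eta_p}\dot\kappa)}\,\tilde\chi_p(\iota_p^{\eta_p}\ddot\kappa)$. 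At $\infty$, writing $\beta=\kappa_\phi\diag(re^{t/2},re^{-t/2})\kappa_\psi$ as in \S\ref{hops}, the substitution $\beta\mapsto\kappa_{\dot\theta}^{-1}\beta\kappa_{\ddot\theta}$ shifts $(\phi,\psi)$ to $(\phi-\dot\theta,\psi+\ddot\theta)$, hence scales $X$ by $e^{\ic(\ddot\theta-\dot\theta)}$ and fixes $P$ and $\nu$; as $(\lambda X)^{\und{\breve k}}=\lambda^{\breve k}X^{\und{\breve k}}$ for $|\lambda|=1$, this gives $L(\rho(\kappa_{\dot\theta},\kappa_{\ddot\theta}))\breve\vp_\infty=e^{\ic(-\breve k\dot\theta+\breve k\ddot\theta)}\breve\vp_\infty$.

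For the $\omega({}^{(1)}\kappa)$-factor, when $p\nmid N^\chi$ the $\tilde\chi_p$-factors are trivial and it suffices to see $\breve\vp_p$ is fixed by $\omega$ of each generator; when $p\mid N^\chi$ each generator scales $\breve\vp_p$ by $\tilde\chi_p$ of its $(1,1)$-entry, and as $g\mapsto g_{11}\bmod p$ is a homomorphism on the $\Gamma_0(p)$-type group with $({}^{(1)}\kappa)_{11}=\kappa_{11}$, the product along a factorization is $\tilde\chi_p(\kappa)$. Here $n(x)$ acts by the phase $\e_p(\hf x\langle\beta,\beta\rangle)=\e_p(x\,\nu(\beta))$, trivial on the support of $\breve\vp_p$ since $\nu(\beta)\in\Z_p$ there; $\diag(a,a^{-1})$ sends $\vp$ to $|a|_p^2\,\vp(a\,\cdot\,)$, multiplying the neben-weight by $\tilde\chi_p(a)$; and for $\bar n(c)$ one writes $\bar n(c)=J^{-1}n(-c)J$ with $J=\left[\begin{smallmatrix}0&1\\-1&0\end{smallmatrix}\right]$ and uses $\omega(J)=\pm\F$ (sign $-1$ precisely when $p\mid d_B$, cancelling between $\F$ and $\F^{-1}$) to obtain $\omega(\bar n(c))=\F^{-1}\circ\big[\,\e_p(-c\,\nu(\cdot))\,\cdot\,\big]\circ\F$; the Fourier transforms already computed in \S\ref{locdata} show $\F\breve\vp_p$ is supported where $\nu(\beta)\in p^{-1}\Z_p$, so the inserted phase is trivial and $\omega(\bar n(c))\breve\vp_p=\F^{-1}\F\breve\vp_p=\breve\vp_p$ (for $p\nmid d_BN$ one has instead $c\in\Z_p$ and $\F\breve\vp_p=\breve\vp_p$ supported on $\M_2(\Z_p)$, where $\nu(\beta)\in\Z_p$, with the same conclusion). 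At $\infty$ I would simply quote $\omega(\kappa_\theta)\breve\vp_\infty=e^{\ic|k|\theta}\breve\vp_\infty$ from \S\ref{locdata}. Multiplying the $L$-factor and the $\omega({}^{(1)}\kappa)$-factor yields the two asserted eigenvalues.

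The main obstacle is the lower-unipotent step: unlike $n(x)$ and the torus, which act on $\breve\vp_p$ by visible scalars, the generators $\bar n(c)$ force passage through the twisted Fourier transform, so one must feed in the explicit formulas for $\F\breve\vp_p$ from \S\ref{locdata} and run a short support estimate separately in $p\nmid d_BN$, $p\mid N^\flat N^\sharp$, $p\mid N^\chi$, and $p\mid d_B$ — the uniform payoff being that in every case the support forces $\nu(\beta)$ into $p^{-1}\Z_p$, which is exactly what kills the phase. A secondary point is to keep straight that $\tilde\chi_p$ means formally different characters on $K_p(1,(1,d_BN))$ and on $K_p(d_B,\vec N)$, and to track how $\iota_p$ exchanges the two diagonal positions in the neben-weight.
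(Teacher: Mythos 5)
Your proposal is correct and follows essentially the same route as the paper's own (terse) proof: the paper first treats diagonal $\kappa$ together with general $\dot\kappa,\ddot\kappa$, then reduces the unipotent generators to the two statements $\omega(n(x))\breve\vp_p=\breve\vp_p$ for $x\in\Z_p$ and $\omega(n(x))\F\breve\vp_p=\F\breve\vp_p$ for $x\in d_BN\Z_p$, the latter being exactly your conjugation-by-$J$ treatment of $\bar n(c)$. You have simply made explicit the support computation on $\F\breve\vp_p$ (showing $\nu(\beta)\in p^{-1}\Z_p$ there in each ramified case), which the paper leaves to the reader, and spelled out the $\det\kappa$-torus and $N^\chi$ neben-weight bookkeeping that the paper folds into its one-line diagonal computation.
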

\begin{proof}
First consider diagonal $\kappa\in K_p(1,(1,d_BN))$\,:
\begin{align*}
\omega(\kappa,\rho(\dot\kappa,\ddot\kappa)) \breve\vp_p(\beta)&=
\breve\vp_p(\kappa_{11}\dot\kappa^{-1}\beta\ddot\kappa)\\
&=\tilde\chi_p(\kappa)\,\ov{\tilde\chi_p(\iota_p^{\eta_p}\dot\kappa)}
\,\tilde\chi_p(\iota_p^{\eta_p}\ddot\kappa)\, \breve\vp_p\,.
\end{align*}
Then it suffices to check
\begin{alignat*}{3}
\omega\left(\left[\begin{mat}{cc}1&x\\&1\end{mat}\right]\right)&
\breve\vp_p&&=\breve\vp_p&\hspace{20pt}&\text{for \,}x\in\Z_p\,,\\
\omega\left(\left[\begin{mat}{cc}1&x\\&1\end{mat}\right]\right)&
\F\breve\vp_p&&=\F\breve\vp_p&&\text{for \,}x\in d_BN\Z_p\,.
\end{alignat*}
The archimedean statement follows from previous calculations.
\end{proof}

\begin{lem}\label{weilhop}
For $p\nmid d_B N$,
$\alpha=\left[\begin{mat}{cc}1&\\&p\end{mat}\right]$,
\begin{align*}
\int_{\SL_2(\Z_p)}\omega(\alpha\kappa\alpha^{-1})\vp_p(\beta)
\,d_p^{(1)}\kappa &=\int_{K_p^{(1)}(d_B,\vec N)}
\omega(\alpha,\rho(\dot\kappa^{-1}\alpha,1))
\vp_p(\beta)\,d_p^{(1)}\dot\kappa\\
&=\int_{K_p^{(1)}(d_B,\vec N)}
\omega(\alpha,\rho(1,\ddot\kappa\alpha^{-1}))\vp_p(\beta)
\,d_p^{(1)}\ddot\kappa\,.
\end{align*}
\end{lem}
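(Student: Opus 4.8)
The plan is to verify Lemma~\ref{weilhop} by direct computation of both sides, using the explicit formulas for the Weil representation $\omega$ from \S\ref{wrep} together with the coset description from \S\ref{cosets}. First I would observe that all three integrals are right $\SL_2(\Z_p)$-invariant in the $\rho$-argument only through its left coset, so the right-hand integrals over $K_p^{(1)}(d_B,\vec N)=\SL_2(\Z_p)$ (recall $p\nmid d_BN$, so $\ordp=\M_2(\Z_p)$ and $K_p^{(1)}=\SL_2(\Z_p)$) are averages over that group; unwinding the definition $\omega(\alpha,\rho(\dot\kappa^{-1}\alpha,1))\vp_p(\beta)=|p|_p^{?}\,\vp_p(\ldots)$ via the split $\omega(x,y)=\omega(x^{(1)})L(y)$, with $\nu(\alpha,\rho(\dot\kappa^{-1}\alpha,1))=\det\alpha=p$, reduces everything to a statement about translates of $\ind_{\M_2(\Z_p)}$.

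Next I would reduce the left-hand side $\int_{\SL_2(\Z_p)}\omega(\alpha\kappa\alpha^{-1})\vp_p\,d_p^{(1)}\kappa$ by noting $\alpha\SL_2(\Z_p)\alpha^{-1}\cap\SL_2(\Z_p)$ has index $p$ in $\SL_2(\Z_p)$ — its coset representatives being exactly the $p$ upper-triangular unipotents $\left[\begin{mat}{cc}1&x/p\\&1\end{mat}\right]$ conjugated appropriately, equivalently the $\SL_2(\Z_p)$-preimage of the Hecke data $T_p(p,1)$. Since $\omega$ restricted to the congruence-type subgroup fixing $\vp_p$ acts trivially (this is the content of Lemma~\ref{ktype} at $p\nmid d_BN^\chi$, where $\breve\vp_p=\vp_p$ and $\omega(\kappa,1)\vp_p=\vp_p$ for $\kappa\in\SL_2(\Z_p)$), the averaged operator $\int_{\SL_2(\Z_p)}\omega(\alpha\kappa\alpha^{-1})\,d_p^{(1)}\kappa$ applied to $\vp_p$ collapses to a finite sum over those $p$ cosets of $\omega$ of the corresponding representatives. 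Comparing this with the $\rho(\dot\kappa^{-1}\alpha,1)$ side, one sees both equal $\frac1{p+1}$ times a sum over the $p+1$ vertices of the local tree adjacent to $\M_2(\Z_p)$ — or more concretely, both realize $\tfrac1{p}$ of the Hecke translate appearing in $T_p^{[a]}$ with $a=p$ — and hence agree. The third equality, with $\rho(1,\ddot\kappa\alpha^{-1})$ on the right, follows by the same computation after applying the involution $\iota$ and using $\rho(\dot\alpha,\ddot\alpha)\iota=\iota\rho(\ddot\alpha/\nu(\ddot\alpha),\dot\alpha/\nu(\dot\alpha))$ together with $\omega(\iota_p)\vp_p=\vp_p$ (since $p\nmid N^\chi$), which swaps the roles of the two $\GL_2$ factors in $\breve H'$.

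I expect the main obstacle to be purely bookkeeping: tracking the normalizing factor $|\nu|_v$ in $L(y)\vp(\beta)=|\nu(y)|_v^{-1}\vp(y^{-1}\beta)$ through the $\omega(x,y)=\omega(x^{(1)})L(y)$ decomposition, and matching it against the measure normalization $d_p^{(1)}\kappa$ (for which $\volo(\SL_2(\Z_p))=\volx(\ordx_p)$ from \S\ref{tmeas}) so that the index-$p$ factors cancel correctly. A secondary subtlety is that $\dot\kappa$ ranges over $K_p^{(1)}(d_B,\vec N)$, which for $p\nmid d_BN$ is all of $\SL_2(\Z_p)$, so the right-hand integrals are genuinely over $\SL_2(\Z_p)$ and the identity $\dot\kappa^{-1}\alpha$ lands in $\M_2(\Z_p)$-cosets exactly as needed; I would just check that $\{\dot\kappa^{-1}\alpha\bmod\alpha\M_2(\Z_p)\alpha^{-1}\cap\M_2(\Z_p)\}$ runs over the same $p$ cosets as in the left-hand reduction. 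Once these normalizations are pinned down, the equalities are immediate from $\omega$'s formulas, and no deeper input is required.
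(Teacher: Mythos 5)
The overall strategy you propose — reduce the left-hand integral to a finite average over cosets of a compact subgroup, convert the right-hand integrals to Hecke-operator translates of $\ind_{\ordp}$, and compare — is the same one the paper uses. However, your reduction of the left-hand side contains a concrete error that, if carried through, would give the wrong answer.

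You claim that $\alpha\SL_2(\Z_p)\alpha^{-1}\cap\SL_2(\Z_p)$ has index $p$ in $\SL_2(\Z_p)$, with coset representatives "exactly the $p$ upper-triangular unipotents." This is false. The subgroup on which $\kappa\mapsto\omega(\alpha\kappa\alpha^{-1})\vp_p$ is constant is the Iwahori $K_p^{(1)}(1,(p,1))=\alpha^{-1}\SL_2(\Z_p)\alpha\cap\SL_2(\Z_p)=\left[\begin{mat}{cc}\Z_p&p\Z_p\\\Z_p&\Z_p\end{mat}\right]^{(1)}$, which has index $p+1$, and the coset representatives are the $p$ unipotents $\left[\begin{mat}{cc}1&x\\&1\end{mat}\right]$ ($x\bmod p$) \emph{together with the Weyl element} $\left[\begin{mat}{cc}&1\\-1&\end{mat}\right]$. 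Dropping the Weyl element is fatal: the $p$ unipotent cosets contribute only the $p\,\ind_{\ord_p^{(p\Z_p)}}$ piece, while the Weyl term (through $\omega(J)=\F$ and the diagonal scaling $\diag(p^{-1},p)$) supplies the $p^2\,\ind_{p\ord_p}$ piece, and the correct evaluation of the left-hand side is $\vp_p'=\tfrac1{p+1}\big(p\,\ind_{\ord_p^{(p\Z_p)}}+p^2\,\ind_{p\ord_p}\big)$. On the right-hand side, the $\dot\kappa$-integral equals $\tfrac{p}{p+1}\big(T_p^{[p]}*\ind_{\ord_p}\big)$, and the match with $\vp_p'$ uses precisely the convolution identity $T_p^{[p]}*\ind_{\ord_p}=\ind_{\ord_p^{(p\Z_p)}}+p\,\ind_{p\ord_p}$; your "$\tfrac1p$ of the Hecke translate" does not reproduce this constant. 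Your $p$ vs.\ $p+1$ statements are also mutually inconsistent within the proposal. Your idea of obtaining the third equality from the second by conjugating with $\iota$ is plausible but not worked out; the paper instead computes the $\ddot\kappa$-integral directly as $\tfrac{p}{p+1}\big(\ind_{\ord_p}*T_p^{[p]}\big)$ and uses $\ind_{\ord_p}*T_p^{[p]}=T_p^{[p]}*\ind_{\ord_p}$.
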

\begin{proof}
Since $\omega(\alpha\kappa\alpha^{-1})\,\vp_p=\vp_p$ for
$\kappa\in K_p^{(1)}(1,(p,\tfrac1p))$, the $\kappa$ integral
reduces to an average over the
$K_p^{(1)}(1,(1,1))\big/K_p^{(1)}(1,(p,1))$ coset
representatives,
\begin{align*}
\left[\begin{mat}{cc}1&x\\&1\end{mat}\right]\hspace{10pt}(x\
{\rm mod}\ p),\hspace{20pt}\left[\begin{mat}{cc}&1\\-1&\end{mat}\right],
\end{align*}
and hence evaluates to
\begin{align*}
\vp_p'(\beta)=\tfrac1{p+1}\big(p\,\ind_{\ord_p^{(p\Z_p)}(d_B,\vec N)}
(\beta)+p^2\,\ind_{p\,\ord_p(d_B,\vec N)}(\beta)\big).
\end{align*}
Now the $\dot\kappa$ and $\ddot\kappa$ integrals evaluate to
\begin{align*}
\int_{K_p^{(1)}(d_B,\vec N)}p\,\vp_p(\alpha^{-1}\dot\kappa\beta)
\,d_p^{(1)}\dot\kappa
&=\tfrac{p}{p+1}\,\big(T_p^{[p]}*\ind_{\ordp}\big)(\beta),\\
\int_{K_p^{(1)}(d_B,\vec N)}p\,\vp_p(\beta\ddot\kappa\alpha^{-1})
\,d_p^{(1)}\ddot\kappa
&=\tfrac{p}{p+1}\,\big(\ind_{\ordp}*T_p^{[p]}\big)(\beta).
\end{align*}
It follows from the calculations in \S\ref{hring} (extending by
continuity to $B_v$) that
\begin{align*}
\ind_{\ordpx}&=\ind_{\ord_p}*\big(T_p(1)-T_p^{[p]}+p\,T_p(p)\big),\\
\ind_{\ord_p}*T_p^{[p]}&= \ind_{\ord_p}-\ind_{\ordpx}
+p\,\ind_{p\ord_p} = T_p^{[p]}*\ind_{\ord_p}\\
&=\ind_{\ord_p^{(p\Z_p)}}+p\,\ind_{p\ord_p}=\tfrac{p+1}p\,\vp'_p\,.
\end{align*}
\end{proof}

\chapter{L-functions}

In this chapter we compute special values of adjoint and triple
product L-functions. First we describe the relevant Langlands
parameters (admissible representations of Weil or Weil-Deligne
groups), which determine canonical $L$ and $\veps$ factors.
Then we compute special values of the global $L$-functions
using the Rankin--Selberg and Garrett/Piatetski-Shapiro--Rallis
integral representations.

\section{Local Langlands Correspondence}

Let $\We_{F}$ denote the Weil group of the local field $F$ (as in
\cite{tate,kudla,shah}) and ${\mathsf r}_F$ the local class-field
theory isomorphism ${\mathsf r}_F:F\ti\stackrel\sim\longrightarrow
\We_{F}^{\rm ab}$. If $\chi$ is a character of $F\ti$, denote by
$\chi^{\mathsf r}$ the corresponding character of $\We_{F}$. In
particular, write $\|\,\|=|\,|_F^{\mathsf r}$.

Recall that $\We_\C=\C\ti$ is abelian and ${\mathsf r}_\C$ is the
identity (so $\|z\|=|z|_\C$), and
\begin{align*}
\We_\R=\C\ti\cup\jmath\,\C\ti,\hspace{30pt}
\jmath^2=-1,\hspace{20pt}\jmath z\jmath^{-1}=\ov{z},
\end{align*}
has closed commutator subgroup $\We_\R^{\rm c}=\C^{(1)}$, with
${\mathsf r}_\R$ being defined by
\begin{align*}
\R^+\ni r\mapsto r^{\frac12}\,\C^{(1)},\hspace{20pt}
-1\mapsto\jmath\,\C^{(1)}\hspace{30pt}\big(\text{so }
\|z\|=\|\jmath z\|=|z|_\C\big).
\end{align*}

As described in \cite{knapp} (with a normalization error that we
have corrected), all one-dimensional representations of $\We_\R$
are isomorphic to some $\varrho^1=(s,\delta)_\R^1$\,,
\begin{align*}
\varrho^1(z)=\|z\|^s,\hspace{20pt}\varrho^1(\jmath)=(-1)^\delta,
\hspace{20pt}\text{for \,}s\in\C,\ \delta\in\{0,1\},
\end{align*}
and all irreducible two-dimensional semi-simple representations
to some $\varrho^2=(s,l)_\R^2$\,,
\begin{align*}
\varrho^2(re^{i\theta})=\left[\begin{mat}{cc}r^{2s}e^{il\theta} &\\
&r^{2s}e^{-il\theta}\end{mat}\right],
\hspace{20pt}\varrho^2(\jmath)=\left[\begin{mat}{cc}&(-1)^l
\\1&\end{mat}\right],\hspace{20pt}\text{for \,}s\in\C,\
l\in\Z^+.
\end{align*}
If we allow $l\in\Z$, then
\begin{gather*}
(s,0)_\R^2\simeq(s,0)^1_\R\oplus(s,1)^1_\R\,,\hspace{30pt}
(s,l)_\R^2\simeq(s,-l)_\R^2\,,\\[6pt]
\begin{aligned}
(s_1,\delta_1)_\R^1\otimes(s_2,\delta_2)_\R^1&\simeq
(s_1+s_2,\delta)_\R^1\,,
\hspace{15pt}\delta\equiv\delta_1+\delta_2\ \,{\rm mod}\,\,2,\\
(s_1,l_1)_\R^2\otimes(s_2,l_2)_\R^2&\simeq(s_1+s_2,l_1+l_2)_\R^2
\oplus(s_1+s_2,l_1-l_2)_\R^2\,,\\
(s_1,\delta_1)_\R^1\otimes(s_2,l_2)_\R^2&\simeq(s_1+s_2,l_2)_\R^2\,.
\end{aligned}
\end{gather*}
The standard local factors are defined as follows:
\begin{alignat*}{2}
\zeta_\R(s&)=\pi^{-\frac{s}{2}}\Gamma(\tfrac{s}{2}),&
\zeta_\C(s&)=\zeta_\R(s)\,\zeta_\R(s+1)\\
&&&\phantom{)}=(2\pi)^{-s}\,\Gamma(s),\\[6pt]
L(s,\varrho^1&)=\zeta_{\R}(s+s_\infty+\delta_\infty),&\hspace{30pt}
L(s,\varrho^2&)=\zeta_{\C}(s+s_\infty+\tfrac{l_\infty}2),\\
C(t,\varrho^1&)=(1+|\ic t+s_\infty|),&
C(t,\varrho^2&)=(1+|\ic t+s_\infty+\tfrac{l_\infty}2|)^2,\\
\veps(s,\varrho^1&{},\e_\infty)=\ic^{\delta_\infty},&
\veps(s,\varrho^2&{},\e_\infty)=\ic^{l_\infty+1}.
\end{alignat*}

Now let $\We'_{\Q_p}$ denote the Weil-Deligne group of $\Q_p$. We
implicitly identify the characters of $\We_{\Q_p}$ and
$\We'_{\Q_p}$. Recall from \cite{tate,kudla} the indecomposable
admissible representation $\spp^n$ of $\We'_{\Q_p}$ on $\C\{{\rm
e}_0,\ldots,{\rm e}_{n-1}\}$ defined by
\begin{align*}
\spp^n(w)\,{\rm e}_i=\|w\|^{i}\,{\rm e}_i,\hspace{30pt}
\spp^n(\Nil)\,{\rm e}_{i}=
\begin{cases}{\rm e}_{i+1} & \text{if }i<n-1,\\\ 0 & \text{if }i=n-1.
\end{cases}
\end{align*}
Note $(\spp^n)^\vee\simeq\|\,\|^{1-n}\otimes\spp^n$.  Since
$\spp^n\simeq {\rm Sym}^{n-1}\spp^2$, it follows that
\begin{align*}
\spp^m\otimes\spp^n&\simeq\bigoplus_{i=0}^{n-1}
\|\,\|^{i}\otimes\spp^{m+n-2i-1}\hspace{10pt}\forall\ m\ge n,\\
\otimes^2\spp^2&\simeq\|\,\|^1\oplus\spp^3,\\
\otimes^3\spp^2&\simeq\oplus^2\big(\|\,\|^1\otimes\spp^2\big)\oplus\spp^4.
\end{align*}
Also consider a character $\tilde\chi_p$ of $\Q_p\ti$ with
conductor $p^k\Z_p$ for $k>0$. The standard local factors we
need are then defined as follows: $\zeta_p(s)=(1-p^{-s})^{-1}$,
\begin{alignat*}{2}
\varrho&=\|\,\|^{s_p}\otimes\spp^n,&
\varrho'&=\tilde\chi_p^{\mathsf r}\,,\\
L(s,\varrho)&=\zeta_p(s+s_p+n-1),&L(s,\varrho')&=1,\\
C(\varrho)&=p^{n-1},&C(\varrho')&=p^k,\\
\veps(s,\varrho,\e_p)&=\prod_{i=0}^{n-2}
\frac{\zeta_p(-s-s_p-i)}{\zeta_p(s+s_p+i)}&\hspace{30pt}
\veps(s,\varrho',\e_p)&=p^{-ks}\int_{p^{-k}\Z_p\ti}
\tilde\chi_p^{-1}(x)\,\e_p(x)\,dx.\\
&=\big(-p^{-s-s_p-\frac{n-2}2}\big)^{n-1},
\end{alignat*}

\subsection{Adjoint Parameters}

Consider a representation $\pi_v$ of $\GL_2(\Q_v)$ as in
\S\ref{autr}\,. Below we list the corresponding Langlands
parameters $\varrho$ and their local factors, plus those of
compositions with the adjoint representation ${\rm
Ad}:\GL_2(\C)\rightarrow\GL_3(\C)$, \hbox{${\rm Ad}\varrho
={\rm Ad}\circ\varrho
\simeq\varrho\otimes\varrho^\vee\ominus1$}.\vspace{-4pt}

\subsubsection{Archimedean}

If $k=0$, then
$\pi_\infty\simeq\pi(\sgn^{\delta_\infty}|\,|_\infty^{s_\infty},
\sgn^{\delta_\infty}|\,|_\infty^{-s_\infty})$,
$\delta_\infty\in\{0,1\}$, corresponds to\vspace{-2pt}
\begin{align*}
\varrho^0&\simeq(s_\infty,\delta_\infty)_\R^1\oplus
(-s_\infty,\delta_\infty)_\R^1\,,\text{ with}\\
L(s,\varrho^0)&=\zeta_\R(s+s_{\infty}+\delta_{\infty})\,
\zeta_\R(s-s_{\infty}+\delta_{\infty}),\\
C(t,\varrho^0)&=(1+|\ic t+s_\infty|)\, (1+|\ic t-s_\infty|),\\
\veps(s,\varrho^0,\e_\infty)&=(-1)^{\delta_\infty},\text{ and}\\[4pt]
{\rm Ad} \varrho^0&\simeq (2s_\infty,0)_\R^1\oplus
(0,0)_\R^1\oplus (-2s_\infty,0)_\R^1\,,\text{ with}\\
L(s,{\rm Ad} \varrho^0)&=\zeta_\R(s+2s_{\infty})\,\zeta_\R(s)\,
\zeta_\R(s-2s_{\infty}),\\ C(t,{\rm Ad} \varrho^0)&=(1+|\ic
t+2s_\infty|)\,(1+|\ic t|)\,(1+|\ic t-2s_\infty|),\\
\veps(s,{\rm Ad} \varrho^0,\e_\infty)&=1.\\[-22pt]
\end{align*}
If $k\ge2$, then
$\pi_\infty\simeq\sigma(\sgn^k|\,|_\infty^{s_\infty},|\,
|_\infty^{-s_\infty})$, $s_\infty=\tfrac{k-1}2$, corresponds
to\vspace{-2pt}
\begin{align*}
\varrho^k&\simeq(0,k-1)_\R^2\,,\text{ with}\\
L(s,\varrho^k)&=\zeta_\C(s+s_{\infty}),\\
C(t,\varrho^k)&=(1+|\ic t+s_\infty|)^2,\\
\veps(s,\varrho^k,\e_\infty)&=\ic^{k},\text{ and}\\[4pt]
{\rm Ad}\varrho^k&\simeq(0,2k-2)_\R^2\oplus(0,1)_\R^1\,,\text{ with}\\
L(s,{\rm Ad} \varrho^k)&= \zeta_\C(s+2s_{\infty})\,\zeta_\R(s+1),\\
C(t,{\rm Ad} \varrho^k)&=(1+|\ic t+2s_\infty|)^2\,(1+|\ic t|),\\
\veps(s,{\rm Ad} \varrho^k,\e_\infty)&=(-1)^k\,.\\[-22pt]
\end{align*}

\subsubsection{Non-Archimedean}

If $p\nmid M^\sharp M^\chi$, then $\pi_p\simeq\pi(|\,|_p^{\dot
s_p}, |\,|_p^{\ddot s_p})$ corresponds to \vspace{-2pt}
\begin{align*}
\varrho^0&\simeq\|\,\|^{\dot s_p} \oplus\|\,\|^{\ddot s_p},\text{ with}\\
L(s,\varrho^0)&=\zeta_p(s+\dot s_p)\, \zeta_p(s+\ddot s_p),\\
\veps(s,\varrho^0,\e_p)&=1,\\ C(\varrho^0)&=1,\text{ and}
\displaybreak[0]\\[4pt]
{\rm Ad}\varrho^0&\simeq \|\,\|^{\dot s_p-\ddot s_p}\oplus\|\,
\|^0 \oplus\|\,\|^{\ddot s_p-\dot s_p},\text{ with}\\
L(s,{\rm Ad} \varrho^0)&=\zeta_p(s+\dot
s_p-\ddot s_p)\,\zeta_p(s) \,\zeta_p(s+\ddot s_p-\dot s_p),\\
\veps(s,{\rm Ad} \varrho^0,\e_p)&=1,\\
C({\rm Ad} \varrho^0)&=1.\\[-22pt]
\end{align*}
If $p\mid M^\sharp$, then
$\pi_p\simeq\sigma(|\,|_p^{-\frac12+\ic t_p},
|\,|_p^{\frac12+\ic t_p})$ corresponds to \vspace{-2pt}
\begin{alignat*}{2}
\varrho^\sharp&\simeq\|\,\|^{-\frac12+\ic t_p}\otimes\spp^2
&\text{and}\hspace{10pt}{\rm Ad}\varrho^\sharp&\simeq\|\,\|^{-1}
\otimes\spp^3,\text{ with}\\
L_p(s,\varrho^\sharp)&=\zeta_p(s+\tfrac12+\ic t_p), & L(s,{\rm
Ad} \varrho^\sharp)&=\zeta_p(s+1),\\
\veps_p(s,\varrho^\sharp,\e_p)&=(-p^{-\ic
t_p})p^{-s+\frac12},&\hspace{30pt}\veps(s,{\rm Ad}
\varrho^\sharp,\e_p)&=p^{-2s+1},\\
C(\varrho^\sharp)&=p\,,&C({\rm Ad}\varrho^\sharp)
&=p^2.\\[-22pt]
\end{alignat*}
If $p\mid M^\chi$, then $\pi_p\simeq\pi(\tilde\chi_p|\,|_p^{\ic
t_p}, |\,|_p^{-\ic t_p})$ corresponds to \vspace{-2pt}
\begin{align*}
\varrho^\chi&\simeq\tilde\chi_p^{\mathsf r}\,\|\,\|^{\ic
t_p}\oplus\|\,\|^{-\ic t_p},\text{ with}\\
L(s,\varrho^\chi)&=\zeta_p(s-\ic t_p),\\
\veps(s,\varrho^\chi,\e_p)&=\Big(p^{-\ic t_p-\frac12}
\displaystyle\int_{\frac1p\Z_p\ti}
\ov{\tilde\chi_p(x)}\,\e_p(x)\,dx\Big)p^{-s+\frac12},\\[-8pt]
C(\varrho^\chi)&=p\,,\text{ and}\\[4pt]
{\rm Ad} \varrho^\chi &\simeq\tilde\chi_p^{\mathsf
r}\,\|\,\|^{2\ic t_p}\oplus\|\,\|^0
\oplus\ov{\tilde\chi_p^{\mathsf r}}\,\|\,\|^{-2\ic t_p},
\text{ with}\\
L(s,{\rm Ad} \varrho^\chi)&=\zeta_p(s),\\
\veps(s,{\rm Ad}\varrho^\chi,\e_p)&=\tilde\chi_p(-1)\,p^{-2s+1},\\
C({\rm Ad} \varrho^\chi)&=p^2.\\[-22pt]
\end{align*}

\subsection{Triple Product Parameters}

\subsubsection{Archimedean}

If $k_1=k_2=k_3=0$, let $\{0,1\}\ni\delta\equiv
\delta_1+\delta_2+\delta_3\ \,{\rm mod}\,\,2$, so that
\vspace{-2pt}
\begin{align*}
\varrho:=\varrho_1^0\otimes\varrho_2^0\otimes\varrho_3^0
&\simeq\bigoplus_{\ \{\pm\}^3}(\pm s_{1} \pm s_{2}
\pm s_{3} ,\delta)_\R^1\,,\text{ with}\\
L(s,\varrho)&=\prod_{\ \{\pm\}^3} \zeta_\R(s\pm s_{1}\pm s_{2}
\pm s_{3}+\delta),\\
C(t,\varrho)&= \displaystyle \prod_{\ \{\pm\}^3}
(1+|\ic t\pm s_{1} \pm s_{2} \pm s_{3}|),\\
\veps(s,\varrho,\e_\infty) &=1.
\end{align*}
If $k=|k_1|=|k_2|\ge2$ and $k_3=0$, \vspace{-2pt}
\begin{align*}
\varrho:=\varrho_1^k\otimes\varrho_2^{k}\otimes\varrho_3^0
&\simeq (s_{3},2k-2)_\R^2\oplus (s_{3},0)_\R^2\\&\hspace{20pt}
\oplus(-s_{3},2k-2)_\R^2\oplus(-s_{3},0)_\R^2\,,\text{ with}\\
L(s,\varrho)&= \zeta_\C(s+s_{3}+k-1)\,\zeta_\C(s+s_{3})\\
&\hspace{20pt}\cdot\zeta_\C(s-s_{3}+k-1)\, \zeta_\C(s-s_{3}),\\
C(t,\varrho)&=(1+|\ic t+s_3+k-1|)^2\,(1+|\ic t+s_3|)^2\\
&\hspace{20pt}\cdot(1+|\ic t-s_3+k-1|)^2\,(1+|\ic t-s_3|)^2,\\
\veps(s,\varrho,\e_\infty) &= 1\,.\\[-22pt]
\end{align*}
If $|k_1|=|k_2|+|k_3|>|k_2|\ge|k_3|\ge 2$, \vspace{-2pt}
\begin{align*}
\varrho:=\varrho_1^{|k_1|}\otimes\varrho_2^{|k_2|}\otimes
\varrho_3^{|k_3|}&\simeq(0,2|k_1|-3)_\R^2\oplus(0,2|k_2|-1)_\R^2\\
&\hspace{20pt}\oplus (0,2|k_3|-1)_\R^2 \oplus (0,1)_\R^2\,,\text{ with}\\
L(s,\varrho)&=\zeta_\C(s+|k_1|-\tfrac32)\,\zeta_\C(s+|k_2|-\tfrac12)\\
&\hspace{20pt}\cdot \zeta_\C(s+|k_3|-\tfrac12)
\,\zeta_\C(s+\tfrac12)\,,\\
C(t,\varrho)&= (1+|\ic t+|k_1|-\tfrac32|)^2\,(1+|\ic t+|k_2|-\tfrac12|)^2\\
&\hspace{20pt}\cdot (1+|\ic t+|k_3|-\tfrac12|)^2\,(1+|\ic
t+\tfrac12|)^2,\\ \veps(s,\varrho,\e_\infty) &= 1\,.
\end{align*}

\subsubsection{Non-Archimedean}
\vspace{-12pt}

\begin{align*}
\varrho^0_1\otimes \varrho^0_2&\simeq
\bigoplus_{\eta\in\{\cdot,\cdot\cdot\}^2}
\|\,\|^{s_1^{\eta_1}+s_2^{\eta_2}},\\
\varrho^0_1\otimes \varrho^0_2\otimes \varrho^0_3&\simeq
\bigoplus_{\eta\in\{\cdot,\cdot\cdot\}^3}
\|\,\|^{s_1^{\eta_1}+s_2^{\eta_2}+s_3^{\eta_3}},\\
L_p(s,\varrho^0_1\otimes \varrho^0_2\otimes \varrho^0_3)&=
\displaystyle \prod_{\eta\in\{\cdot,\cdot\cdot\}^3}
\zeta_p(s+s_1^{\eta_1}+s_2^{\eta_2}+s_3^{\eta_3})\,,\\
\veps_p(s,\varrho^0_1\otimes\varrho^0_2\otimes \varrho^0_3,\e_p)&=1,\\
C(\varrho^0_1\otimes \varrho^0_2\otimes \varrho^0_3)&=1,\\[8pt]
\varrho^0_1\otimes \varrho^0_2\otimes
\varrho^\sharp_3&\simeq\bigoplus_{\eta\in\{\cdot,\cdot\cdot\}^2}
\|\,\|^{s_1^{\eta_1}+s_2^{\eta_2}-\frac12+\ic
t_3}\otimes\spp2,
\end{align*}
\begin{align*}
\varrho^\sharp_1\otimes \varrho^\sharp_2&\simeq \|\,\|^{\ic
t_1+\ic t_2}\oplus\big(\|\,\|^{-1+\ic
t_1+\ic t_2}\otimes\spp^3\big),\\
\varrho^\sharp_1\otimes \varrho^\sharp_2\otimes
\varrho^\sharp_3&\simeq\oplus^2\big(\|\,\|^{-\frac12+\ic
(t_1+t_2+t_3)}\otimes\spp^2\big)
\oplus\big(\|\,\|^{-\frac32+\ic(t_1+t_2+t_3)}\otimes\spp^4\big),\\
L(s,\varrho^\sharp_1\otimes\varrho^\sharp_2\otimes\varrho^\sharp_3)&=
\zeta_p(s+\tfrac12+\ic(t_1+t_2+t_3))^2\,\zeta_p(s+\tfrac32+\ic
(t_1+t_2+t_3))\,,\\
\veps(s,\varrho^\sharp_1\otimes \varrho^\sharp_2\otimes
\varrho^\sharp_3,\e_p)&=(-p^{-5\ic(t_1+t_2+t_3)})p^{-5s+\frac52},\\
C(\varrho^\sharp_1\otimes \varrho^\sharp_2\otimes
\varrho^\sharp_3)&=p^5,\\[8pt]
\varrho^\sharp_1\otimes \varrho^\sharp_2\otimes\varrho^0_3
&\simeq \|\,\|^{\ic t_1+\ic t_2+\dot s_3}\oplus\big(\|\,\|
^{-1+\ic t_1+\ic t_2+\dot s_3}\otimes\spp^3\big)\\
&\hspace{10pt}\oplus\|\,\|^{\ic t_1+\ic t_2+\ddot s_3}
\oplus\big(\|\,\|^{-1+\ic t_1+\ic t_2+\ddot
s_3}\otimes\spp^3\big).
\end{align*}

\section{Zeta Integrals}

\subsection{Rankin--Selberg}
\label{adjsqz}

\begin{lem}\label{ransel} Let $F\in\tilde
S_k(1,(1,M),\tilde\chi,M^\flat)$ be a Hecke normalized
$\Hal^\star$ eigenform, with Langlands parameters
$\varrho=(\varrho_v)$. Then
\begin{gather*}
\langle F,F\rangle=\int_{\A\ti}\left|\widehat{F}\left(\left[
\begin{mat}{cc}a&\\&1\end{mat}\right]\right)\right|^2 |a|_\A^{-1}\,da
=\frac{\prod_vc_v}{\zeta^*(2)}\,L^*(1,\Ad\varrho),\\[8pt]
\begin{aligned}
c_\infty&=
\begin{cases}
1 & \text{if }k=0,\\
2^{-|k|-1} & \text{if }k\neq0,
\end{cases}&\hspace{20pt}
c_p&=
\begin{cases}
1 & \text{if }p\nmid M,\\
\tfrac p{p+1} & \text{if }p\mid M^\sharp M^\chi,\\
2\big(1+\tfrac{\ov{\veps_p}\lambda_p}{p+1}\big) & \text{if }
p\mid M^\flat.
\end{cases}
\end{aligned}
\end{gather*}
In terms of the corresponding classical Hecke form $f\in
S_k(1,(1,M),\chi,M^\flat)$,
\begin{align*}
\int_{\Gamma_0(M)\lmod\h}|f(z)|^2\,\dvh{x}{y}=2c_\infty M
\Big(\sideset{}{_{p\mid M^\flat}}\prod 2\big(1+\tfrac{1+\ov{\veps_p}\lambda_p}p\big)\Big)
\,L^*(1,\Ad\varrho).
\end{align*}
\end{lem}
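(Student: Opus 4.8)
The plan is to run the classical Rankin--Selberg method for $\GL_2\times\GL_2$, applied to $F$ against itself. First I would integrate $|F|^2$ against the degenerate Eisenstein series $E(s,g)$ on $\PGL_2(\A)$ induced from $|\,\cdot\,|^s$ (with the standard $\zeta^*(2s)$-normalization, so that it has a simple pole at $s=1$ with constant residue). Since $F$ is cuspidal, unfolding $E$ and substituting the Fourier--Whittaker expansion of \S\ref{whit} kills the constant term, and Parseval on $\Q\lmod\A$ kills every cross-term, so that
\begin{align*}
\int_{\GL_2(\Q)\Ax\lmod\GL_2(\A)}|F(g)|^2\,E(s,g)\,dg
&=\int_{\Ax N(\A)\lmod\GL_2(\A)}|\widehat F(g)|^2\,\Phi_s(g)\,dg,
\end{align*}
where $\Phi_s$ is the inducing section. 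An Iwasawa decomposition $g=\diag(a,1)\,n\,\kappa$, together with the fixed right $K_v$-types of $\widehat F$, collapses the $\kappa$-integral to a harmless constant and leaves $\int_{\Ax}|\widehat F(\diag(a,1))|^2|a|_\A^{s-1}\,da$. Matching the behaviour at $s=1$ of the two sides---on the left, the residue of $E$ carries $\langle F,F\rangle$, together with the normalizing $\zeta^*(2)$---produces the first displayed equality, the integral being understood in the standard regularized (Rankin--Selberg continuation) sense.

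Next, because $F$ is Hecke normalized, $\widehat F=\prod_vW_v^\star$, so the continued integral factors as an Euler product $\prod_vZ_v(s)$ of local zeta integrals $Z_v(s)=\int_{\Q_v\ti}|W_v^\star(\diag(a,1))|^2\,|a|_v^{s-1}\,da$. I would then evaluate each from the explicit $W_v^\star$ of \S\ref{whit}: for $p\nmid M$ this is the classical unramified Rankin--Selberg integral, equal to $L_p(s,\pi_p\times\pi_p^\vee)=\zeta_p(s)\,L_p(s,\Ad\varrho_p)$ up to a $\zeta_p(2s)$-factor, so $c_p=1$; for $p\mid M^\sharp$ (special representation) and $p\mid M^\chi$ (ramified nebentypus) the geometric-series Whittaker functions $W_p^\sharp,W_p^\chi$ give the same $L$-factor with the ramified correction $\tfrac{p}{p+1}$; for $p\mid M^\flat$ one expands $|W_p^\flat|^2=|\dot W_p^0+\ov{\veps_p}\ddot W_p^0|^2$ into four local integrals and reassembles them with the inner-product data $\langle\ddot V_p^0,\dot V_p^0\rangle=\tfrac{\lambda_p}{p+1}$ and $\langle V_p^\flat,V_p^\flat\rangle=2\big(1+\tfrac{\ov{\veps_p}\lambda_p}{p+1}\big)$ of \S\ref{autr}, giving $c_p=2\big(1+\tfrac{\ov{\veps_p}\lambda_p}{p+1}\big)$; and at $v=\infty$ the integral is a Mellin transform of $|w_{0,s_\infty}|^2$---a product of two $K$-Bessel functions, hence a ratio of $\zeta_\R$'s, $c_\infty=1$---when $k=0$, and of $|w_{\frac{|k|}2,\frac{|k|-1}2}|^2=y^{|k|}e^{-y}$, i.e.\ a $\Gamma$-integral, when $|k|\ge2$, giving $c_\infty=2^{-|k|-1}$. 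Since the full product of local $L$-factors is the completed $\zeta^*(s)\,L^*(s,\Ad\varrho)$, whose pole at $s=1$ matches that of $E$, extracting the leading behaviour at $s=1$ (with ${\rm Res}_{s=1}\zeta^*(s)=1$) and collecting the $\zeta_v(2s)$-pieces into $\zeta^*(2)^{-1}$ yields $\langle F,F\rangle=\tfrac{\prod_vc_v}{\zeta^*(2)}\,L^*(1,\Ad\varrho)$.

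For the classical statement I would pass through the isomorphism $S_k(1,(1,M),\chi,M^\flat)\cong\tilde S_k(1,(1,M),\tilde\chi,M^\flat)$ of \S\ref{autf}, comparing the adelic inner product---Tamagawa measure on $\PGL_2$, with $\volx(\PGL_2(\Q)\lmod\PGL_2(\A))=2$, $\volx(\ordx_p(1,(1,M)))=\zeta_p(2)^{-1}(p+1)^{-1}$ for $p\mid M$, and the $\tfrac12$ in $\langle\cdot,\cdot\rangle$, all from \S\ref{tmeas}---with the classical integral $\int_{\Gamma_0(M)\lmod\h}|f|^2\,\dvh{x}{y}$. The measure comparison supplies the index $[\SL_2(\Z):\Gamma_0(M)]=M\prod_{p\mid M}(1+p^{-1})$ together with the factor $\zeta^*(2)$ (from the $\zeta_p(2)^{-1}$'s and $\zeta_\R(2)=\pi^{-1}$) and elementary constants; feeding this in, the $c_p=1$ for $p\nmid M$ drop out, the $c_p=\tfrac{p}{p+1}$ for $p\mid M^\sharp M^\chi$ cancel the corresponding $(1+p^{-1})$, each $p\mid M^\flat$ acquires an extra $\tfrac{p+1}{p}$ that turns $2\big(1+\tfrac{\ov{\veps_p}\lambda_p}{p+1}\big)$ into $2\big(1+\tfrac{1+\ov{\veps_p}\lambda_p}{p}\big)$, and the surviving $M$ and overall $2$ yield $\int_{\Gamma_0(M)\lmod\h}|f|^2\,\dvh{x}{y}=2c_\infty M\big(\prod_{p\mid M^\flat}2(1+\tfrac{1+\ov{\veps_p}\lambda_p}{p})\big)L^*(1,\Ad\varrho)$.

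The conceptual core---unfolding and Euler factorization---is the classical Rankin--Selberg argument; the real work, and the main place to go wrong, is the chain of normalization matchings: the various $\tfrac12$'s in the adelic inner products, the $\zeta_v(2s)$-type denominators from the Eisenstein series and the local integrals assembling into exactly one $\zeta^*(2)$, completed versus incomplete $L$-functions, and the Tamagawa-to-hyperbolic measure conversion, so that each $c_v$ and the lone $\zeta^*(2)$ come out precisely as stated---with the $p\mid M^\flat$ oldvector computation (four local integrals plus the Gram matrix) and the $|k|\ge2$ archimedean integral being the most delicate individual pieces.
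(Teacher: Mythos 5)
Your proposal is correct and follows essentially the same route as the paper: unfold $|F|^2$ against a degenerate level-$M$ Eisenstein series with residue $\tfrac12$ at $s=1$, Euler-factorize via the Hecke normalization $\widehat F=\prod_v W_v^\star$, evaluate each local Mellin integral from the explicit Whittaker functions of \S\ref{whit}, and pass to the classical form by Tamagawa-measure bookkeeping. The only superficial difference is at $p\mid M^\flat$: you propose expanding $|W_p^\flat|^2$ into four local Mellin integrals and identifying their $s=1$ leading terms with the Gram data of \S\ref{autr}, whereas the paper simply proves the $M^\flat=1$ case by the residue/unfolding argument and then appeals to the norm ratio $\langle V_p^\flat,V_p^\flat\rangle=2\big(1+\tfrac{\ov{\veps_p}\lambda_p}{p+1}\big)\langle\dot V_p^0,\dot V_p^0\rangle$ to rescale; these amount to the same thing.
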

\begin{proof}
Suppose $M^\flat=1$ for now, and define the $PG_\A$ Eisenstein
series
\begin{gather*}
\begin{aligned}
E(s,g,\xi)&=\sum_{\gamma\in PT_\Q\lmod PG_\Q}\xi(s,\gamma g),\\
\xi_v\left(s,\left[\begin{mat}{cc}a&x\\&1\end{mat}\right]\kappa\right)
&=|a|_v^s\,\tfrac1{{\rm vol}_v}\ind_{PK_v(1,(1,M))}(\kappa),
\end{aligned}\\
\kappa\in PK_v(1,(1,1)),\hspace{10pt} {\rm vol}_v={\rm
vol}_v(PK_v(1,(1,M))).
\end{gather*}
Now consider the Rankin--Selberg zeta integral,
\begin{align*}
Z(s,F\times\ov F,\xi)&=\int_{PG_\Q\lmod PG_\A}|F(g)|^2\,
E(s,g,\xi)\,dg\\
&=\int_{PT_\Q\lmod PG_\A} |F(g)|^2\,\xi(s,g)\,dg\\
&=\int_{\Q\ti\lmod\A\ti}\int_{\Q\lmod\A}\left|F\left(\left[
\begin{mat}{cc}a&x\\&1\end{mat}\right]\right)\right|^2\,
|a|^{s-1}_\A\,\dti a\,dx\\
&=\int_{\Q\ti\lmod\A\ti}\sum_{\xi\in\Q\ti}
\left|\widehat{F}\left( \left[\begin{mat}{cc}\xi
a&\\&1\end{mat}\right]\right)\right|^2 |a|_\A^{s-1}\,\dti a\\
&=\sideset{}{_v}\prod Z_v(s,\widehat{F}_v\times\widehat{\ov F}_v),\\
Z_v(s,\widehat{F}_v\times\widehat{\ov F}_v) &= \int_{\Q_v\ti}
\left|\widehat{F}_v\left(\left[\begin{mat}{cc}a&\\&1\end{mat}
\right]\right)\right|^2|a|_v^{s-1}\,\dvt a.
\end{align*}
These local factors are easy to compute: If $k=0$,
\begin{align*}
Z_\infty(s,W_p^0\times\check W_p^0) &=\zeta_\R(s+2s_\infty)\,
\zeta_\R(s)^2\,\zeta_\R(s-2s_\infty)\,\zeta_\R(2s)^{-1}\\
&=L_\infty(s,\varrho_\infty\otimes\ov\varrho_\infty)\,\zeta_\R(2s)^{-1}.
\intertext{If $|k|\ge2$,}
Z_\infty(s,W_p^k\times\check W_p^k)&=(4\pi)^{-(s+|k|-1)}
\,\Gamma(s+|k|-1)\\
&=2^{-|k|-1}\,\zeta_\C(s+|k|-1)\,\zeta_\C(s)\,\zeta_\R(2s)^{-1}\\
&=2^{-|k|-1}\,L_\infty(s,\varrho_\infty\otimes\ov\varrho_\infty)
\,\zeta_\R(2s)^{-1}.
\intertext{If $p\nmid M$,}
Z_p(s,W_p^0\times\check W_p^0)&=\zeta_p(s+\dot s_p-\ddot s_p)\,
\zeta_p(s)^2\,\zeta_p(s+\ddot s_p-\dot s_p)\,\zeta_p(2s)^{-1}\\
&=L_p(s,\varrho_p\otimes\ov\varrho_p)\,\zeta_p(2s)^{-1}.
\intertext{If $p\mid M^\sharp$,}
Z_p(s,W_p^\sharp\times\check W_p^\sharp)&=\zeta_p(s+1)
=L_p(s,\varrho_p\otimes\ov\varrho_p)\,\zeta_p(s)^{-1}.
\intertext{If $p\mid M^\chi$,}
Z_p(s,W_p^\chi\times\check W_p^\chi)&=\zeta_p(s)
=L_p(s,\varrho_p\otimes\ov\varrho_p)\,\zeta_p(s)^{-1}.
\end{align*}

By unfolding the integral of an incomplete Eisenstein series as
above, one can show that $\underset{s=1}{\rm res}\,
E(s,g,\xi)=\frac12$. Therefore the Petersson norm of $F$ is given
by
\begin{align*}
\langle F,F\rangle&=\tfrac12 \int_{PG_\Q\lmod PG_\A}|F(g)|^2\,dg\\
&=\underset{s=1}{\rm res}\, Z(s,F\times\ov F,\xi)\\
&=\lim_{s\to1} Z(s,F\times\ov F,\xi)\,\zeta^*(s)^{-1}\\
&=\sideset{}{_v}\prod Z_v(1,\widehat{F}_v\times\widehat{\ov F}_v)
\,\zeta_v(1)^{-1}\\
&=\int_{\A\ti} \left|\widehat{F}\left(\left[\begin{mat}{cc}a&\\
&1\end{mat}\right]\right)\right|^2 |a|_\A^{-1}\,da.
\end{align*}
It is easy to check the value of each $c_v$, defined by
\begin{align*}
Z_v(1,\widehat{F}_v\times\widehat{\ov F}_v)=\frac{c_v}{\zeta_v(2)}
\,L_v(1,\varrho\otimes\ov\varrho).
\end{align*}
The restriction $M^\flat=1$ is removed using the calculation
from \S\ref{autr},
\begin{align*}
\langle V_p^\flat,V_p^\flat\rangle=2\big(1+\tfrac{\ov{\veps_p}
\lambda_p}{p+1}\big)\,\langle\dot V_p^0,\dot V_p^0\rangle.
\end{align*}
\end{proof}

Now we consider Eisenstein series which are Hecke eigenforms. For
any choice of signs $\eps\in\prod_{p\mid M}\{\pm1\}$, define
$E(s,g,\xi^\eps)$ as before, with $R_p=R_p(1,(1,M))$ and
\begin{align*}
\xi^\eps(s,g)= \big(\textstyle\prod_{p\mid M}(1+\eps_p
R_p)\big)\,\xi(s,g).
\end{align*}
Then by Hejhal's calculation of the scattering matrix
\cite{hej},
\begin{gather*}
\eta(s,\xi^\eps)\,E(s,g,\xi^\eps)=\eta(1-s,\xi^\eps)
\,E(1-s,g,\xi^\eps),\\
\eta(s,\xi^\eps):=\big(\textstyle\prod_{p\mid M}
(1+\eps_pp^{s})\big)\,\zeta^*(2s).
\end{gather*}
Finally, we make the spectral renormalization \cite{hej,iwan},
\begin{gather*}
E^1(s,g,\xi^\eps)=\frac{2^{-\frac12\#\{p\mid M\}+\frac12}}
{\vol(\Gamma_0(M)\lmod\h)}\, E(s,g,\xi^\eps),\text{ so that}\\
\int_{\Gamma_0(M)\lmod\h}\left|\int_0^\infty h(t)\,
E^1(\tfrac12+\ic t,\sigma_z,\xi^\eps) \,\tfrac{dt}{\pi}
\right|^2\dvh{x}{y}=\int_0^\infty |h(t)|^2\,\tfrac{dt}{\pi}.
\end{gather*}
\begin{lem}\label{eismth}
Let $\psi\in S_k(1,(1,M),\chi,1)$ be a classical Hecke
normalized $\Hal^\star$ eigenform, with Langlands parameters
$\varrho=(\varrho_v)$. Then
\begin{align*}
\frac{\displaystyle\int_{\Gamma_0(M)\lmod\h}|\psi(z)|^2\,
E^1(s,\sigma_z,\xi^\eps) \,\dvh{x}{y}}
{\displaystyle\int_{\Gamma_0(M)\lmod\h}|\psi(z)|^2\,\dvh{x}{y}} =
{\textstyle\big(\prod_{p\mid M}\frac{1+\eps_p}2\big)}
\frac{2^{\frac12\#\{p\mid M \}-\frac32}}{M^{1-s}}\,
\frac{{L{}}^*(s,\varrho\otimes\ov\varrho)}
{{L{}}^*(1,\Ad\varrho)\,\eta(s,{\xi{}}^\veps)}.
\end{align*}
\end{lem}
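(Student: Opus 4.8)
The plan is to run the Rankin--Selberg unfolding exactly as in the proof of Lemma~\ref{ransel}, but with the Atkin--Lehner-twisted section $\xi^\eps=\big(\prod_{p\mid M}(1+\eps_pR_p)\big)\xi$ in place of $\xi$. Let $F\in\tilde S_k(1,(1,M),\tilde\chi,1)$ be the Hecke normalized adelic eigenform attached to $\psi$, so that $\widehat F=\prod_vW_v^\star$ with the Whittaker functions of \S\ref{whit}. First I would record the classical--adelic dictionary already implicit in Lemma~\ref{ransel}: with the inner products of \S\ref{autf} and the Tamagawa measures of \S\ref{tmeas} one has $\int_{\Gamma_0(M)\lmod\h}|\psi|^2\,\dvh{x}{y}=\vol(\Gamma_0(M)\lmod\h)\,\langle F,F\rangle$ and, since the adelic $E(s,g,\xi^\eps)$ restricts to the classical Eisenstein series, $\int_{\Gamma_0(M)\lmod\h}|\psi|^2\,E(s,\sigma_z,\xi^\eps)\,\dvh{x}{y}=\tfrac12\vol(\Gamma_0(M)\lmod\h)\,Z(s,F\times\ov F,\xi^\eps)$, where $Z(s,F\times\ov F,\xi^\eps)=\int_{PG_\Q\lmod PG_\A}|F(g)|^2\,E(s,g,\xi^\eps)\,dg$. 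Feeding in the definition of $E^1$ and the value of $\langle F,F\rangle$ from Lemma~\ref{ransel} (with $M^\flat=1$, so $c_p=\tfrac p{p+1}$ for $p\mid M$), the asserted ratio becomes an explicit elementary constant times $Z(s,F\times\ov F,\xi^\eps)/\big(c_\infty\,L^*(1,\Ad\varrho)\big)$, and everything reduces to evaluating $Z(s,F\times\ov F,\xi^\eps)$.

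Next I would unfold. Since $E(s,g,\xi^\eps)=\sum_{\gamma\in PT_\Q\lmod PG_\Q}\xi^\eps(s,\gamma g)$, the outer integral collapses to $\int_{PT_\Q\lmod PG_\A}|F(g)|^2\,\xi^\eps(s,g)\,dg$; inserting the Fourier--Whittaker expansion of $F$ and using left $N_\A$-invariance of $\xi^\eps$ factors this as $\prod_vZ_v(s,\widehat F_v\times\widehat{\ov F}_v,\xi_v^\eps)$, where $\xi_v^\eps=\xi_v$ for $v\nmid M$ and $\xi_p^\eps=(1+\eps_pR_p)\xi_p$ for $p\mid M$. For $v=\infty$ and for $p\nmid M$ these are exactly the local integrals computed in Lemma~\ref{ransel}, namely $c_\infty\,L_\infty(s,\varrho_\infty\otimes\ov\varrho_\infty)\,\zeta_\R(2s)^{-1}$ and $L_p(s,\varrho_p\otimes\ov\varrho_p)\,\zeta_p(2s)^{-1}$. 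For $p\mid M$ I split $Z_p(s,(1+\eps_pR_p)\xi_p)=Z_p(s,\xi_p)+\eps_p\,Z_p(s,R_p\xi_p)$, the first summand being the ramified local integral $L_p(s,\varrho_p\otimes\ov\varrho_p)\,\zeta_p(s)^{-1}$ of Lemma~\ref{ransel}.

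The crux is the second summand. Translating the integration variable by the Fricke element at $p$ identifies $Z_p(s,R_p\xi_p)$ with the integral of the right $R_p$-translate of $|W_p^\star|^2$ against $\xi_p$, so it suffices to show that $|W_p^\star|^2$ is invariant under right translation by $R_p$ on $\GL_2(\Qp)$. This I would verify from the explicit new-vector Whittaker functions of \S\ref{whit}. For $p\mid M^\sharp$ the function $W_p^\sharp$ is an $R_p(\vec N)$-eigenvector with eigenvalue $\veps_p=-p^{-\ic t_p}$ of modulus one, so the assertion is immediate. For $p\mid M^\chi$ one instead computes the $R_p$-translates of $W_p^\chi$ on the diagonal torus and on the non-trivial Bruhat cell and invokes the Gauss-sum evaluation $|\gs_p|^2=p$ (for $\tilde\chi_p$ of conductor $p$) to see that $|W_p^\chi|^2$ is unchanged; the $\ordpx$-equivariance of $W_p^\chi$ then propagates the identity to all of $\GL_2(\Qp)$ by the Iwasawa--Bruhat decomposition. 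Hence $Z_p(s,R_p\xi_p)=Z_p(s,\xi_p)$ and $Z_p(s,(1+\eps_pR_p)\xi_p)=(1+\eps_p)\,L_p(s,\varrho_p\otimes\ov\varrho_p)\,\zeta_p(s)^{-1}$; in particular this vanishes when $\eps_p=-1$, which is the source of the factor $\prod_{p\mid M}\tfrac{1+\eps_p}2$.

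Finally I would assemble the product. Using $\zeta_\R(2s)\prod_{p\nmid M}\zeta_p(2s)\prod_{p\mid M}\zeta_p(s)=\zeta^*(2s)\prod_{p\mid M}(1+p^{-s})$ together with the elementary identity $\prod_{p\mid M}(1+\eps_p)(1+\eps_pp^s)=M^s\prod_{p\mid M}(1+\eps_p)(1+p^{-s})$ (both sides vanishing unless every $\eps_p=+1$, in which case they agree), one obtains $Z(s,F\times\ov F,\xi^\eps)=c_\infty\,L^*(s,\varrho\otimes\ov\varrho)\,M^s\big(\prod_{p\mid M}(1+\eps_p)\big)\big/\eta(s,\xi^\eps)$; dividing by $\langle F,F\rangle$ and reinstating the constant from the first step collapses everything to $\big(\prod_{p\mid M}\tfrac{1+\eps_p}2\big)\,2^{\frac12\#\{p\mid M\}-\frac32}\,M^{s-1}\,L^*(s,\varrho\otimes\ov\varrho)\big/\big(L^*(1,\Ad\varrho)\,\eta(s,\xi^\eps)\big)$, as asserted; Hejhal's functional equation $\eta(s,\xi^\eps)E(s,g,\xi^\eps)=\eta(1-s,\xi^\eps)E(1-s,g,\xi^\eps)$ supplies the meromorphic continuation in $s$ and serves as a consistency check. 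I expect the main obstacle to be the right $R_p$-invariance of $|W_p^\star|^2$ in the ramified-principal-series case, together with the disciplined tracking of the powers of $2$, of $\zeta^*(2)$, and of $\vol(\Gamma_0(M)\lmod\h)$ needed to land on the exact constant $2^{\frac12\#\{p\mid M\}-\frac32}M^{s-1}$.
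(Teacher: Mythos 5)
Your proposal is correct and follows the same route the paper takes. The paper's proof consists of the single observation that $R_p\,|\widehat F|^2 = |\widehat F|^2$ for $p\mid M$, from which it concludes $Z(s,F\times\ov F,\xi^\eps) = \big(\prod_{p\mid M}\tfrac{1+\eps_p}{2}\big)2^{\#\{p\mid M\}}\,Z(s,F\times\ov F,\xi)$ and then divides by the formula of Lemma~\ref{ransel}; your local factorization and verification that $|W_p^\star|^2$ is right $R_p$-invariant (via the unit eigenvalue for $p\mid M^\sharp$, and via $|\gs_p|=p^{1/2}$ for $p\mid M^\chi$) is exactly the unstated content of that one line, spelled out explicitly.
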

\begin{proof}
Let $F$,$f$ be as in Lemma \ref{ransel}, with $M^\flat=1$.
Since $R_p\,|\widehat{F}|^2=|\widehat{F}|^2$ for all $p\mid M$,
\begin{align*}
Z(s,F\times\ov F,\xi^\eps)&=\big(\textstyle\prod_{p\mid M}
\frac{1+\eps_p}2\big)\,2^{\#\{p|M\}}\,Z(s,F\times\ov F,\xi)\\
&=\big(\textstyle\prod_{p\mid M}\frac{1+\eps_p}2\big)\,
2^{\#\{p|M\}}\, \displaystyle\frac{c_\infty\,
{L{}}^*(s,\varrho\otimes\ov\varrho)}{{M{}}^{-s}\,
\eta(s,{\xi{}}^\eps)},\hspace{10pt}\text{and}\\
Z(s,F\times\ov F,\xi^\eps)&=2^{\frac12\#\{p\mid M\}+\frac12}\,
\int_{\Gamma_0(M)\lmod\h}|f(z)|^2\, E^1(s,\sigma_z,\xi^\eps)\,
\dvh{x}{y}.
\end{align*}
The result now follows by dividing out the similar formula of
Lemma \ref{ransel}. Since the new formula is self-normalizing,
we only require $\psi\in\C\ti\!f$ to be an eigenform.
\end{proof}

Note that the identity of Lemma \ref{eismth} is consistent with
the functional equations of $E^1(s,g,\xi^\eps)$ and
${L{}}^*(s,\varrho\otimes\ov{\varrho})$. For the purpose of
comparison with Theorem \ref{mth}, we observe that the
Langlands parameters of the unitary Eisenstein series
$E^1(\frac12+\ic t,g,\xi^\eps)$ are given by $\varrho^E_v=
\|\,\|^{\ic t}\oplus\|\,\|^{-\ic t}$, and that
\begin{align*}
\lefteqn{\frac{\displaystyle \left|\int_{\Gamma_0(M)\lmod\h}
|\psi(z)|^2\, E^1(\tfrac12+\ic t,\sigma_z,\xi^\eps)
\,\dvh{x}{y}\right|^2}
{\displaystyle\left(\int_{\Gamma_0(M)\lmod\h}
|\psi(z)|^2\,\dvh{x}{y}\right)^2}}\hspace{100pt}\\
&={\textstyle\big(\prod_{p|M}Q_p^E\big)}\,
\frac{2^{\#\{p\mid M\}-3}}{M^2}\,\frac{{L{}}^*
(\frac12,\varrho\otimes\ov\varrho\otimes\varrho^E)}
{{L{}}^*{(1,\Ad\varrho)}^2\,\underset{s=1}{\rm res}
{L{}}^*(s,\Ad\varrho^E)},\\[12pt]
Q_p^E&=\big(\tfrac{1+\eps_p}2\big)\,\big(1+p^{-\frac12-\ic t}
\big)^{-1}\, \big(1+p^{-\frac12+\ic t}\big)^{-1}.
\end{align*}

\subsection{Garrett/Piatetski-Shapiro--Rallis}

In this section we explicitly calculate zeta integrals for the
Rankin triple L-function, as defined in \cite{psr}.  Let
$F_j\in\pi_j\cap\tilde S_k(1,(1,d_B N^\sharp),1,1)$ be three Hecke
normalized $\Hal_v^\star$ eigenforms as in Theorem \ref{adjshim},
with matching data $\vp_j$ as defined in \S\ref{locdata}. Consider
$\Fb=F_1\times F_2\times F_3$ as an automorphic form on $P\Gb_\A$
and $\vpb=\vp_1\times\vp_2\times\vp_3\in\Sz(B_\A^3)$. We
distinguish totally unramified data as $\vpb^0$.

Following \cite{psr,hk1}, we consider $\Phi_v(s,h)$ satisfying
\begin{align*}
\Phi_v(s,th)=|\det AD^{-1}|_v^{s+1}\,\Phi_v(s,h),\hspace{20pt}\text{for \,}
h\in H_v,\ t=\left[\begin{mat}{cc}A&B\\0&D\end{mat}\right]\in T_v.
\end{align*}
Note the Iwasawa decomposition $H_v=T_v K_v$, where
\begin{align*}
K_\infty= {\rm U}(3),\hspace{40pt}K_p=\GSp_6(\Z_p).
\end{align*}
Now for $h\in H_v$, choose any $h'\in H'_v$ s.t.\ $\nu(h')=\nu(h)$
and define
\begin{align*}
\Phi_v(0,h)=\omega(h,h')\vpb_v(0).
\end{align*}
Since $\Phi_v^0(0,h)$ is constant on $K_v$,
\begin{align*}
\Phi_v(s,h)=\left(\tfrac{\Phi_v^0(0,h)}{\Phi_v^0(0,1)}\right)^{s}
\,\Phi_v(0,h).
\end{align*}

The local zeta integral is defined as in \cite{psr,gh,gk} by
\begin{gather*}
Z_v(s,\widehat{\ov\Fb}_v,\vpb_v)= \int_{\Q_v\ti
N_v^0\lmod\Gb_v}\Phi_v(s,\gamma_0 g)
\,\widehat{\ov\Fb}_v(g)\,dg,\displaybreak[0]\\
\lefteqn{\hspace{-50pt}\text{where}}
N_v^0=\left\{\left[\begin{mat}{cc}\text{\Large $1$}_3&
\begin{array}{ccc}x_1&&\\&x_2&\\&&x_3\end{array}\\
\text{\Large $0$}_3&\text{\Large $1$}_3\end{mat}\right];\
x_1+x_2+x_3=0\right\},\displaybreak[0]\\
H_\Q^{(1)}\ni\gamma_0=\left[\begin{mat}{rrrrrr}
 1 & 1 & 1 & -1 & 0 & 0 \\
 0 & 1 & 0 & -1 & 1 & 0 \\
 0 & 0 & 1 & -1 & 0 & 1 \\
 1 & 1 & 1 & 0 & 0 & 0 \\
 0 & 0 & 0 & -1 & 1 & 0 \\
 0 & 0 & 0 & -1 & 0 & 1 \\
\end{mat}\right].
\end{gather*}

\begin{lem}
For any $\vpb_v\in\Sz(B_v^3)$,
\begin{align*}
\omega(\gamma_0)\vpb_v(0,0,0) = (-1)^{\ind_{v\mid
d_B}}\int_{B_v}\vpb_v(\beta,\beta,\beta)\,d\beta.
\end{align*}
\end{lem}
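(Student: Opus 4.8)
The plan is to unwind both sides directly from the definition of the Weil representation $\omega$ on $\Sz(B_v^3)$, using the explicit formulas for the action of the standard generators of $\Spp_6(\Q_v)$, and then reduce to a factorization of $\gamma_0$ into such generators. First I would observe that $\gamma_0\in H^{(1)}_\Q$, i.e.\ it has similitude $1$, so $\omega(\gamma_0)$ is genuinely the action of an element of $\Spp_6(\Q_v)$ and I do not need to worry about the similitude twist $L(y)$. The key is that the $6\times6$ matrix $\gamma_0$ has block form $\left[\begin{smallmatrix}A&B\\ C&D\end{smallmatrix}\right]$ with $A,B,C,D\in\M_3(\Q)$, and inspecting it one sees $C=\left[\begin{smallmatrix}1&1&1\\0&0&0\\0&0&0\end{smallmatrix}\right]$ has rank $1$, so $\gamma_0$ is \emph{not} in the Siegel parabolic; one must use the Weyl element. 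The strategy is therefore to write $\gamma_0 = n_1 \, \big[\begin{smallmatrix}A'&\\&{A'}^{-t}\end{smallmatrix}\big]\, w \, n_2$ (Bruhat-style) where $n_1,n_2$ are upper-triangular unipotents of the form $\left[\begin{smallmatrix}1_3&U\\&1_3\end{smallmatrix}\right]$ with $U=U^t$, $w=\left[\begin{smallmatrix}&1_3\\-1_3&\end{smallmatrix}\right]$ is the full Weyl element, and $A'\in\GL_3(\Q)$, and then apply $\omega$ one factor at a time using the three displayed formulas in \S\ref{wrep}.

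The main computation is then: evaluate $\omega(\gamma_0)\vpb_v$ at the point $(0,0,0)\in B_v^3$. Acting by $n_2$ first multiplies by a quadratic exponential $\e_v(\tfrac12\langle U_2\betab,\betab\rangle)$ which is trivial at $\betab=0$; then $\omega(w)$ produces $(-1)^{n\ind_{v\mid d_B}}\F\vpb$ with $n=3$, so a sign $(-1)^{\ind_{v\mid d_B}}$ since $(-1)^3=-1$; then the diagonal block $\big[\begin{smallmatrix}A'&\\&{A'}^{-t}\end{smallmatrix}\big]$ acts by $|\det A'|_v^{2}\,(\F\vpb)(A'\cdot)$ — but crucially the Fourier transform $\F$ here is the $\iota$-twisted, self-dual one on $B_v^3$, so its normalization is exactly such that $\int_{B_v}$ against the constant function comes out right; finally $n_1$ contributes another quadratic exponential. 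I would evaluate the composite at $(0,0,0)$: since $\F\vpb$ is evaluated (after the linear substitution $A'$) at a point lying on the diagonal $\{(\beta,\beta,\beta)\}$, and $\F$ on $B^3_v$ factors as $\F_1\F_2\F_3$, the value $(\F\vpb)(A'(0))$ restricted appropriately becomes $\int_{B_v^3}\vpb_v(\gamma)\,\e_v(\langle\gamma,\,\cdot\,\rangle)\,d\gamma$ evaluated where the dual variable is constrained to the diagonal, i.e.\ $\int_{B_v}\vpb_v(\beta,\beta,\beta)\,d\beta$ after the change of variables collapses the three copies. The Jacobian factors $|\det A'|_v^2$ and the measure normalizations must be tracked to confirm the constant is exactly $1$ (times the sign); this is where the self-duality normalization of $\F$ and of $d\alpha_v$ fixed in \S\ref{tmeas} does all the work.

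The hard part will be pinning down the exact Bruhat decomposition of $\gamma_0$ and verifying that the resulting product of normalization constants — the power of $|\det A'|_v$, the measure-theoretic Jacobian of the substitution $\betab\mapsto A'\betab$, and the fact that the three diagonal exponentials from $n_1,n_2$ vanish at the relevant arguments — collapses to the single clean identity, with the sign $(-1)^{\ind_{v\mid d_B}}$ coming solely from the cube of the Weyl-element sign. An alternative, and perhaps cleaner, route that I would actually pursue is to avoid Bruhat entirely: note that $\gamma_0$ was engineered in \cite{psr,hk1} precisely so that conjugation/left-translation relates the degenerate Siegel-Eisenstein section on $H$ to the diagonal restriction, and that $\omega(\gamma_0)\vpb_v(0,0,0)$ is, by the very construction of the doubling integral, the pullback under the diagonal embedding $\M_2\hookrightarrow\M_2^3$ of the Gaussian-type section; concretely, one checks that $\gamma_0^{-1}\cdot(0,0,0,*,*,*)$ lands, under $\omega$, on the characteristic geometry that forces $\F$ to integrate $\vpb$ over the diagonal copy of $B_v$. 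Either way the verification is a finite check at a single point, so no convergence issues arise, and the only genuine content is bookkeeping of the self-dual Haar measure on $B_v$ versus on $B_v^3$.
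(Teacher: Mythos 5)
Your Bruhat-style decomposition of $\gamma_0$ with the \emph{full} Weyl element $w=\left[\begin{smallmatrix}&1_3\\-1_3&\end{smallmatrix}\right]$ does not exist. You correctly note that the lower-left block $C$ of $\gamma_0$ has rank $1$, but that observation is precisely what forbids your decomposition: relative to the Siegel parabolic $P\subset\GSp_6$, the double coset $PwP$ consists of elements with $\rank C=3$, while $\gamma_0$ lies in the rank-$1$ cell, whose representative is a \emph{partial} Weyl element inverting only one of the three $\Spp_2$ factors. Consequently the sign cannot come from $(-1)^{3\ind_{v\mid d_B}}$ as a cube of Weil indices; that this happens to equal $(-1)^{\ind_{v\mid d_B}}$ is a numerical coincidence, not a derivation. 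The paper's factorization is $\gamma_0=J_1^{-1}EJ_1\cdot F\cdot G$, where $J_1$ is the partial Weyl element acting in the first coordinate only, $E$ and $F$ are upper unipotents, and $G$ is block diagonal. The element $J_1^{-1}EJ_1$ is a \emph{lower} unipotent, which is not among the three generators whose $\omega$-action is displayed in \S\ref{wrep}; its action is realized as $\F_1^{-1}\circ\big[\e_v(-\nu(\cdot))\cdot\big]\circ\F_1$, a partial-Fourier conjugate, and the Gaussian integral over a \emph{single} copy of $B_v$ that this produces upon evaluation at $0$ is what contributes the Weil index $(-1)^{\ind_{v\mid d_B}}$.

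Your proposal also leaves the decisive collapse to the diagonal unexplained. You assert that $(\F\vpb_v)(A'(0))$ "restricted appropriately" becomes $\int_{B_v}\vpb_v(\beta,\beta,\beta)\,d\beta$, but $\F\vpb_v(0)=\int_{B_v^3}\vpb_v$, an integral over all of $B_v^3$; nothing in a full-Weyl-element factorization would constrain the dual variable to the diagonal. In the paper, the diagonal integral arises from three intertwined features: the block-diagonal factor $G$ implements the substitution $(\beta_1,\beta_2,\beta_3)\mapsto(\beta_1,\beta_1+\beta_2,\beta_1+\beta_3)$, which carries the coordinate line $\{(\beta,0,0)\}$ onto the diagonal; the partial Fourier step integrates out only $\beta_1$, yielding $\int_{B_v}\e_v(\nu(\beta))\,\vpb'_v(\beta,0,0)\,d\beta$; and the quadratic phase $\e_v(-\nu(\beta_1))$ contributed by $\omega(F)$ cancels the $\e_v(\nu(\beta))$ produced by the Gaussian, leaving exactly $\int_{B_v}\vpb_v(\beta,\beta,\beta)\,d\beta$. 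None of these — the partial (not full) Fourier transform, the subspace-to-diagonal linear map, and the exact phase cancellation — is visible in your sketch, and all are forced by the rank-$1$ Bruhat structure that your approach ignores. The concluding "alternative route" appeals to the design of the doubling method rather than supplying a computation, so it does not close the gap either.
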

\begin{proof}
Write $\gamma_0=J_1^{-1}EJ_1FG$,
\begin{alignat*}{2}
&E=\left[\begin{mat}{cc}
  \text{\Large $1$}_3 & \begin{array}{rrr}
    \hspace{-9pt}-1 &  &  \\
     & 0 &  \\
     &   & 0 \\
  \end{array} \\
  \text{\Large $0$}_3 & \hspace{4pt}\text{\Large $1$}_3
\end{mat}\right],&\hspace{30pt}
&\hspace{4pt}J_1=\left[\begin{mat}{rccc}
  0 &  & 1 &  \\
    & \text{\Large $1$}_2 &  & \text{\Large $0$}_2 \\
 -1 &  & 0 &  \\
    & \text{\Large $0$}_2 &  & \text{\Large $1$}_2
\end{mat}\right],\\[12pt]
&F=\left[\begin{mat}{cc}
 \text{\Large $1$}_3 &
 \begin{array}{rrr}
    \hspace{-9pt}-1 &  &  \\
     & 1 &  \\
     &  & 1 \\
  \end{array} \\
  \text{\Large $0$}_3 & \hspace{4pt}\text{\Large $1$}_3
\end{mat}\right],&
&G=\left[\begin{mat}{llllll} 1&1&1&&&\\ &1&0&&&\\ &&1&&&\\
&&&1&&\\ &&&\hspace{-9.2pt}-1&1&\\ &&&\hspace{-9.2pt}-1&0&1\\
\end{mat}\right].
\end{alignat*}
Then
\begin{align*}
\vpb'_v(\beta_1,\beta_2,\beta_3)&=
\omega(FG)\vpb_v(\beta_1,\beta_2,\beta_3)\\
&=\e_v(-\nu(\beta_1)+\nu(\beta_2)+\nu(\beta_3))\\
&\hspace{20pt}\cdot\vpb_v(\beta_1,\beta_1+\beta_2,
\beta_1+\beta_3),\\[12pt]
\omega(\gamma_0)\vpb_v(0,0,0)&=\omega(J_1^{-1}EJ_1)\vpb'_v(0,0,0)\\
&=(\F_1^{-1}\,\e_v(-\nu(\alpha_1))\cdot\F_1\vpb'_v)(0,0,0)\\
&=(-1)^{\ind_{v\mid d_B}}\int_{B_v}\e_v(\nu(\beta))\,
\vpb'_v(\beta,0,0)\,d\beta\\
&=(-1)^{\ind_{v\mid d_B}} \displaystyle\int_{B_v}
\vpb_v(\beta,\beta,\beta)\,d\beta.
\end{align*}
\end{proof}

By the calculations of Piatetski-Shapiro--Rallis, Ikeda,
Gross--Kudla, plus our own which follow, we have
\begin{thm}\label{zeta}
Suppose $k=|k_1|\ge|k_2|\ge|k_3|\ge 0$,  $k_1+k_2+k_3=0$
($\Rightarrow\ k_2k_3\ge0$), and for some fixed square-free
$N$, all $N_j^\sharp=N$ and all $N_j^\flat=N_j^\chi=1$. Then
\begin{gather*}
Z_v(0,\check\Wb\!_v^{\,\star},\vpb_v)=\frac{C_v}{\zeta_v(2)^2}\,
L_v(\tfrac12,\pi_1\times\pi_2\times\pi_3),\\[4pt]
C_\infty=\begin{cases}
\tfrac{\veps_\infty+1}2 & \text{if }k=0,\\
2^{-2k-2} & \text{if }\ge2.
\end{cases}\hspace{40pt}
C_p=\begin{cases}
\phantom{(}1 & \text{if }p\nmid d_B N,\\
(\veps_p-1)\frac{p\,(p+\veps_p)}{(p+1)^3} & \text{if }p\mid d_B,\\
(\veps_p+1)\frac{p\,(p+\veps_p)}{(p+1)^3} & \text{if }p\mid N.
\end{cases}
\end{gather*}
\end{thm}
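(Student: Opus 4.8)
The plan is to compute $Z_v(0)$ directly, place by place, by substituting the explicit Whittaker functions of \S\ref{whit} and the explicit local data $\vp_j$ of \S\ref{locdata}, after first unwinding the degenerate section $\Phi_v$. Writing $g=(g_1,g_2,g_3)\in\Gb_v$ with common similitude $\nu(g_j)=\delta$, I would use $\Phi_v(0,h)=\omega(h,h')\vpb_v(0)$ together with the factorization $\omega(g,h')=\omega({}^{(1)}\!g)L(h')$ and the lemma just proved to obtain, up to an explicit power of $|\delta|_v$,
\begin{align*}
\Phi_v(0,\gamma_0 g)=(-1)^{\ind_{v\mid d_B}}\int_{B_v}\bigl[\omega({}^{(1)}\!g_1)\vp_1\bigr](\beta)\,\bigl[\omega({}^{(1)}\!g_2)\vp_2\bigr](\beta)\,\bigl[\omega({}^{(1)}\!g_3)\vp_3\bigr](\beta)\,d\beta,
\end{align*}
the restriction-to-the-diagonal integral being the structural source of the triple product; the full section is then $\Phi_v(s,\gamma_0 g)=\bigl(\Phi_v^0(0,\gamma_0 g)/\Phi_v^0(0,1)\bigr)^{s}\Phi_v(0,\gamma_0 g)$, with $\Phi_v^0$ built from the totally unramified $\vpb^0$. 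Substituting into $Z_v$, applying the Iwasawa decomposition $g_j=\diag(a_j,1)\left[\begin{mat}{cc}1&x_j\\&1\end{mat}\right]\kappa_j$ in each $\GL_2$-factor, and using the left-unipotent transformation laws of $\check W_j$ and of $\omega$ to collapse the one residual direction in $N_v/N_v^0\cong\Q_v$, the $K_v$-integrals are absorbed by Lemma \ref{ktype}, and one is left, after interchanging the $\beta$- and $g$-integrations, with a product over $j$ of inner $\GL_2$-integrals $\int\bigl[\omega({}^{(1)}\!g_j)\vp_j\bigr](\beta)\,\check W_j(g_j)\,dg_j$. Each of these is exactly the local integral already evaluated in \S\ref{jls}--\S\ref{locdata} (the convolution of $\check W_j$ against $\vp_j$, i.e.\ an action of the Hecke operators computed there), so it is a known scalar times an explicit function of $\beta$ through $\nu(\beta)$, and the outer integral over $B_v$ is then elementary. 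For $s\ne0$ the same manipulation forces $\Phi_v^0$-weights that are not multiplicative across the three factors, which is exactly why only $s=0$ admits a clean closed form.

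For $p\nmid d_BN$ one has $\check W_j=\check W_p^0$, $\vp_j=\tfrac1{\volx(\ordpx)}\ind_{\M_2(\Zp)}$, and $\Phi_v^0=\Phi_v$; the reduced integral is a finite combination of geometric series in $p^{-\dot s_j},p^{-\ddot s_j}$, and resumming recovers the eight Euler factors $\prod_{\eta\in\{\cdot,\cdot\cdot\}^3}\zeta_p(\tfrac12+s_1^{\eta_1}+s_2^{\eta_2}+s_3^{\eta_3})$ up to $\zeta_p(2)^{-2}$, giving $C_p=1$; this re-proves Piatetski-Shapiro--Rallis by brute force in a form adapted to the ramified cases. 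For $p\mid d_BN$ each $\pi_{j,p}^{\rm JL}$ is an unramified twist of Steinberg, so $\check W_j=\check W_p^\sharp$, but the data differ: $\vp_p=\tfrac1{\volx(\ordpx)}\ind_{\ord_p(1,(1,p))}$ when $p\mid N$ and $\vp_p=\tfrac1{\volx(\valr_p)}\ind_{\valr_p}$ when $p\mid d_B$, and the discrepancy in the $\iota$-twisted self-dual normalizations ($\vol(\valr_p)=\tfrac1p$, the sign $(-1)^{\ind_{v\mid d_B}}$) is precisely what flips $C_p$ between the two subcases. Carrying the support conditions $\ind_{\ord_p}$ against $\nu(\beta)$ and the anti-diagonal values of $W_p^\sharp$ through, the $\beta$-integral collapses to a handful of terms whose sum is $(\veps_p\pm1)\tfrac{p(p+\veps_p)}{(p+1)^3}$, with sign $+$ for $p\mid N$ and $-$ for $p\mid d_B$; in particular $C_p=0$ exactly when the local root number selects the other quaternion algebra — the Gross--Kudla observation that the canonical theta data sees only central values.

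At $v=\infty$ there are three weight patterns allowed by $k_1+k_2+k_3=0$ and $|k_1|\ne1$: all zero; $|k_1|=|k_2|\ge2$ with $k_3=0$; and $|k_1|=|k_2|+|k_3|$ with $|k_2|\ge|k_3|\ge2$. In the two discrete-series cases the relevant $W_\infty^{k_j}$ with $k_j\ne0$ is the elementary lowest-weight vector $w_{|k_j|/2,(|k_j|-1)/2}(y)=y^{|k_j|/2}e^{-y/2}$, and with $\vp_\infty=\tfrac1\pi X^{\und k}\e(iP)$ the $\beta$-integral over $B_\infty$ is a monomial-times-Gaussian integral while the residual torus integral is a Beta/Gamma evaluation; this produces the $\zeta_\C$-factors and $C_\infty=2^{-2k-2}=(2^{-k-1})^2$, the case $k_3=0$ contributing through the third factor a single $K$-Bessel Mellin transform via $w_{0,s_3}$. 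When $k=0$ all three are principal series with $W_\infty^0(\diag(a,1))=\sgn^{\delta_j}(a)\,(a/\pi)^{1/2}K_{s_j}(a/2)$; here $X^{\und0}\equiv1$ makes $\vp_\infty$ even in the $X$-direction, the resulting reflection symmetry annihilates the integral unless $\veps_\infty=+1$, and in that case it becomes a triple $K$-Bessel integral that I would evaluate through its Mellin--Barnes representation, matching the eight $\zeta_\R$-factors of $L_\infty(\tfrac12,\pi_1\times\pi_2\times\pi_3)$ and yielding $C_\infty=\tfrac{\veps_\infty+1}2$.

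The unramified non-archimedean case is Piatetski-Shapiro--Rallis and the discrete-series archimedean cases are essentially Garrett and Ikeda; the genuinely new and delicate part is the ramified archimedean computation — the parity vanishing when $\veps_\infty=-1$, the exact Mellin--Barnes evaluation of the triple $K$-Bessel integral in the Maass case, and the mixed-weight case $k_3=0$ — together with the normalization bookkeeping in the ramified non-archimedean cases that turns the raw $\beta$-integral into $(\veps_p\pm1)\tfrac{p(p+\veps_p)}{(p+1)^3}$. That is where I expect the main obstacle to lie, and where the ``central values only'' feature of the Harris--Kudla choice of data has to be confronted directly.
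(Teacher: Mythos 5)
Your plan assembles the correct ingredients — the lemma converting $\omega(\gamma_0)\vpb_v(0)$ into the restriction-to-the-diagonal integral over $B_v$, the explicit Whittaker functions of \S\ref{whit}, the explicit local data of \S\ref{locdata}, Lemma~\ref{ktype} for $K$-types, and the right special functions in each archimedean regime (Gaussian/Gamma for holomorphic weights, triple $K$-Bessel for Maass, a mixed $K$-Bessel/hypergeometric for $k_3=0$). The parity vanishing for $\veps_\infty=-1$ and the reading of $C_p$ for $p\mid d_BN$ as expressing the Gross--Kudla central-value phenomenon are also correct. But the load-bearing step — interchanging the $\beta$- and $g$-integrations to obtain ``a product over $j$ of inner $\GL_2$-integrals'' — is not available, and the rest of the computation rests on it.

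The obstruction is twofold. First, the interchange is not justified by Fubini: once you strip away the $\beta$-integral, the $g$-integrand $\prod_j[\omega(g_j)\vp_j](\beta)\,\check W_j(g_j)$ transforms under the residual unipotent $N_v/N_v^0\cong\Q_v$ by the character $\e_v\bigl(x(\nu(\beta)+1)\bigr)$, whose $x$-integral over $\Q_v$ diverges absolutely (it is a Dirac delta at $\nu(\beta)=-1$); the convergence in the paper's order of integration comes precisely from the $|z|^{-2s-2}$ factor that the prior $\beta$-integral produces inside $\Phi_v(s,\gamma_0 g)$, and this factor disappears once you swap. Second, even granting a distributional reading, the remaining $g$-integral does not split across $j$: each $[\omega(g_j)\vp_j](\beta)$ is evaluated at $\diag(aa_j,a_j)\beta$ and each $\check W_j$ at $\diag(aa_j,a_j^{-1})$, where $a$ is the \emph{shared} similitude; in the paper's explicit formulas the coupling is visible as $z=x+\ic(a_1^2+a_2^2+a_3^2)$ at $\infty$ and as $|z|_p=\max\{|a\tilde a^2|_p,|x|_p\}$ with $\tilde a$ depending on both $a_2$ and $a_3$ at finite $p$, so the $x$-integral produces $y^{-s-\frac12}K_{s+\frac12}(2\pi y)$ with $y=\sum a_j^2$, or the factor $(1-|pa\tilde a^2|^{2s+1})$, neither of which factors over $j$. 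What actually makes the paper's computation work is the genuinely coupled triple $K$-Bessel Mellin--Barnes integral (Ikeda), a Gamma-function calculation in the holomorphic case, a ${}_3F_2$ three-term identity in the mixed case, and a geometric-series resummation factorized by computer algebra in the unramified non-archimedean case. Separately, for $p\mid d_BN^\sharp$ the paper simply quotes Gross--Kudla's evaluation (after reconciling normalizations) rather than rederiving it directly; your proposal to recompute it is a genuinely different route, but it does not repair the factorization step.
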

\begin{proof}
We go through each case separately.  Note that although we only
describe central values above, they are defined by analytic
continuation of $Z_v(s)$.

\subsubsection{Archimedean}

Recall that\vspace{-4pt}
\begin{align*}
\kappa_\theta|F_j=e^{\ic|k_j|\theta}F_j,\hspace{30pt}
\omega(\kappa_{\theta})\vp_j=e^{\ic|k_j|\theta}\vp_j.\\[-24pt]
\end{align*}
Defining\vspace{-4pt}
\begin{gather*}
g_j=\left[\begin{mat}{rr} \eps a_j&\frac{x}3a_j^{-1}
\\&a_j^{-1}\end{mat}\right],\hspace{30pt}
h'=\rho\left(\left[\begin{mat}{cc}\eps&\\
&1\end{mat}\right],1 \right),\\[4pt]
\text{for \,}\eps\in\{\pm1\},\hspace{20pt}
a_j\in\R\ti,\hspace{20pt}
x\in\R,\\[-12pt]
\intertext{we have}\\[-28pt]
\omega(g_j,h')\vp_j(\beta)=\tfrac1{\pi^3}\,|a_j|^{|k_j|+2}\,
X_\eps^{\und k_j}(\beta)\, \e(\tfrac{x}3\nu(\beta)+\ic
a_j^2P(\beta)),\\[-20pt]
\end{gather*}
where $X_1=X,\ X_{-1}=Y$. Then
\begin{align*}
\Phi(0,\delta g)&=\int_{\M_2(\R)}\omega(g)\vpb(\beta,\beta,\beta)
\,d\beta\\
&=\tfrac1{\pi^3}\,|a_1|^{|k_1|+2}|a_2|^{|k_2|+2}|a_3|^{|k_3|+2}
\int_{\M_2(\R)} |X_\eps|^{2k}\,\e\left(z\,|X|^2 -\ov z\,|Y|^2\right)
\,d\beta,\\&\hspace{20pt}\text{where }
z=x+\ic(a_1^2+a_2^2+a_3^2)= x+\ic y,\displaybreak[0]\\
&=\tfrac1{\pi^3}\,|a_1|^{|k_1|+2}|a_2|^{|k_2|+2}|a_3|^{|k_3|+2}
\Big[\F\,|X_\eps|^{2k}\,\e\big(z\,|X|^2 -\ov
z\,|Y|^2\big)\Big](0)\\
&=\tfrac1{\pi^3}\, (2\pi \ic)^{-2k}\,|a_1|^{|k_1|+2}
|a_2|^{|k_2|+2}|a_3|^{|k_3|+2}\\&\hspace{10pt}\cdot\,
\Big[\big(\ddx{X_\eps}\ddx{\ov{X_\eps}}\big)^{k}\,\F\,
\e\big(z\,|X|^2-\ov z\,|Y|^2\big)\Big](0)\\
&=\tfrac1{\pi^3}\,(2\pi \ic )^{-2k}\,
|a_1|^{|k_1|+2}|a_2|^{|k_2|+2}|a_3|^{|k_3|+2}\, |z|^{-2}\\
&\hspace{10pt}\cdot\,
\Big[\big(\ddx{X_\eps}\ddx{\ov{X_\eps}}\big)^{k}\,
\e\big(-\frac{1}{z}\,|X|^2 +\frac1{\ov{z}}\,|Y|^2\big)\Big](0)\\
&=\tfrac1{\pi^3}\, k!\,|a_1|^{|k_1|+2}|a_2|^{|k_2|+2}
|a_3|^{|k_3|+2}\,(2\pi\,\eps z^{\und\eps}/\ic)^{-k}\,|z|^{-2},
\displaybreak[0]\\[12pt]
\Phi(s,\delta g) &= \tfrac1{\pi^3}\, k!\,
|a_1|^{2s+|k_1|+2}|a_2|^{2s+|k_2|+2} |a_3|^{2s+|k_3|+2}\,
(2\pi\,\eps z^{\und\eps}/\ic)^{-k}\,|z|^{-2s-2}.
\end{align*}
We used the formulas
\begin{align*}
\int_{\R^n}\exp(-\pi x^tQx+2\pi iy^tBx)\,dx&=(\det Q)^{-\frac12}
\exp(-\pi y^tBQ^{-1}B^ty),\\
\left(\ddx{z}\ddx{\ov{z}}\right)^{n}\exp(a|z|^2)\big|_{z=0}
&=n!\,a^n.
\end{align*}

In the case of all $F_j$ being even Maass forms, the difficult
archimedean zeta integral was evaluated by Ikeda \cite{ik}.  It
is trivial to extend his result to arbitrary combinations of
parity. Recall that\vspace{-4pt}
\begin{align*}
\{0,1\}\ni\delta&\equiv\delta_1+\delta_2+\delta_3\mod2,\\
y&=y_1+y_2+y_3.\\[-28pt]
\end{align*}
Then\vspace{-2pt}
\begin{align*}
\lefteqn{Z_\infty(s,\check\Wb\!_\infty^{\,0},\vpb_\infty)}
\hspace{40pt}\\[6pt]
&=\sum_{\{\pm1\}}\int_{(\R\ti)^3}^{\prod>0}\int_\R\int_{[0,\pi)^3}
\tfrac1{\pi^3}\, |a_1a_2a_3|^{2s}\,|z|^{-2-2s}\\&\hspace{40pt}\cdot
\big({\textstyle\prod_j\eps^{\delta_j}\,w_{0,s_j}(4\pi a_j^2)}\big)\,
\e(x)\,2\,\dti a_j\,dx\,d\theta_j\\
&=(1-\delta)\,2^4\int_{(\R^+)^3}\int_\R|z|^{-2-2s}\,\e(x)\,
\textstyle\prod_j a_j^{2s}\, w_{0,s_j}(4\pi a_j^2)\,\dti a_j\,dx\\
&=(1-\delta)\,2^4\int_{(\R^+)^3}\int_\R |z|^{-2-2s}\,\e(x)\,
\textstyle\prod_j y_j^{s+\frac12}\,K_{s_j}(2\pi y_j)\,\dti y_j\,dx\\
&=\frac{(1-\delta)\,2^5\,\pi^{s+1}} {\Gamma(s+1)}
\int_{(\R^+)^3}y^{-s-\frac12} \,K_{s+\frac12}(2\pi y)\,
\textstyle\prod_j y_j^{s+\frac12}\,K_{s_j}(2\pi y_j)\,\dti y_j\\
&=\frac{(1-\delta)\,\pi^{-s}} {\Gamma(s+1)\,\Gamma(2s+1)}
\prod_{\ \{\pm\}^3}\Gamma(\tfrac s2+\tfrac14\pm\tfrac{s_1}2
\pm\tfrac{s_2}2 \pm\tfrac{s_3}2)\\
&=\frac{(1-\delta)}{\zeta_\R(2s+2)\,\zeta_\R(4s+2)}
\prod_{\ \{\pm\}^3}\zeta_\R(s+\hf\pm s_1\pm s_2\pm s_3)\\
&=(1-\delta)\,\frac{L_\infty(s+\tfrac12,\pi_1\times\pi_2\times\pi_3)}
{\zeta_\R(2s+2)\,\zeta_\R(4s+2)}.
\end{align*}

Our calculation in the case $|k_3|\ge2$ is very similar to that of
Gross--Kudla for all-equal positive weights \cite{gk}.
\begin{align*}
\lefteqn{Z_\infty(s,\check W_\infty^{|k_1|}\times\check
W_\infty^{|k_2|}\times \check W_\infty^{|k_3|},\vpb_\infty)}
\hspace{40pt}\\[6pt]
&=\sum_{\{\pm1\}}4\int_{(\R^+)^3}\ind_{\R^-}(\eps)\,a_1^{2s+2|k_1|}
a_2^{2s+2|k_2|}a_3^{2s+2|k_3|}\\&\hspace{20pt}\cdot\int_\R k!\,
(2\pi\,\eps z^{\und\eps}/\ic)^{-k} |z|^{-2s-2}\,\e(z)\,2\,
\dti a_j\,dx\displaybreak[0]\\
&=(2\pi\ic)^{-k}\,k! \int_{(\R^+)^3}y_1^{s+|k_1|}y_2^{s+|k_2|}
y_3^{s+|k_3|}\\&\hspace{20pt}\cdot\int_\R(\ov z)^{-k}|z|^{-2s-2}\,
\e(z)\,\dti y_j\,dx\displaybreak[0]\\
&=(2\pi\ic)^{-k}\,k!\,\frac{(-2\pi\ic)^{s+1}\,
(2\pi\ic)^{s+k+1}}{\Gamma(s+1)\,\Gamma(s+k+1)}\int_{(\R^+)^3}
y_1^{s+|k_1|}y_2^{s+|k_2|}y_3^{s+|k_3|}\\
&\hspace{20pt}\cdot\int_0^\infty e^{-4\pi(y_1+y_2+y_3)(1+t)}\,
t^s(1+t)^{s+k}\,\dti y_j\,dt\displaybreak[0]\\
&=\frac{(2\pi)^{2s+2}\,k!}{\Gamma(s+1)\,\Gamma(s+k+1)}\,
\frac{\Gamma(s+|k_1|)\,\Gamma(s+|k_2|)\,\Gamma(s+|k_3|)}
{(4\pi)^{3s+2k}}\\&\hspace{20pt}\cdot
\int_0^\infty t^s(1+t)^{-2s-k}\,dt\displaybreak[0]\\
&=\frac{k!\,\Gamma(s+|k_1|)\, \Gamma(s+|k_2|)\,\Gamma(s+|k_3|)}
{2^{4s+4k-2}\,\pi^{s+2k-2}\,\Gamma(s+1)\,\Gamma(s+k+1)}\,
\frac{\Gamma(s+1)\,\Gamma(s+k-1)}{\Gamma(2s+k)}\displaybreak[0]\\
&=\frac{2^{-2k-2}\, k!}{(s+k)\,(2s+1)_k}\,\frac{2^4\,(2\pi)^{-4s-2k}}
{\pi^{-3s-2}}\\&\hspace{20pt}\cdot\,
\frac{\Gamma(s+k-1)\,\Gamma(s+1)\,\Gamma(s+|k_2|)\,\Gamma(s+|k_3|)}
{\Gamma(s+1)\,\Gamma(2s+1)}\displaybreak[0]\\
&=\frac{2^{-2k-2}\, k!}{(s+k)\,(2s+1)_k}\,
\frac{L_\infty(s+\hf,\pi_1\times\pi_2\times\pi_3)}
{\zeta_\R(2s+2)\zeta_\R(4s+2)},\displaybreak[0]\\[12pt]
\lefteqn{Z_\infty(0,\check W_\infty^{|k_1|}\times \check
W_\infty^{|k_2|}\times \check W_\infty^{|k_3|},\vpb_\infty)=
\frac{2^{-2k-2}}{\zeta_\R(2)^2}\,L_\infty(\hf,
\pi_1\times\pi_2\times\pi_3).}\hspace{40pt}
\end{align*}
It is easy to check that our evaluation of the above integral is
consistent with Ikeda's result for the overlapping parameter
values $k_j=0$, $s_j=-\tfrac12$.

Finally, in the case $|k_2|\ge2$, $|k_3|=0$, we have
\begin{align*}
\lefteqn{Z_\infty(s,\check W_\infty^{k}\times\check
W_\infty^{k}\times\check W_\infty^{0},\vpb_\infty)}
\hspace{40pt}\\[6pt]
&=(2\pi\ic)^{-k}\,k!\int_{(\R^+)^3}y_1^{s+k}y_2^{s+k}y_3^{s}\,
w_{0,s_3}(4\pi y_3)\,\e^{2\pi y_3}\\&\hspace{20pt}\cdot\int_\R
(\ov{z})^{-k}\,|z|^{-2s-2}\,\e(z)\,\dti y_j\,dx\displaybreak[0]\\
&=(2\pi\ic)^{-k}\,k! \,\frac{(-2\pi\ic)^{s+1}\,(2\pi\ic)^{s+k+1}}
{\Gamma(s+1)\,\Gamma(s+k+1)}\int_{(\R^+)^3}y_1^{s+k}y_2^{s+k}y_3^{s}\,
w_{0,s_3}(4\pi y_3)\\&\hspace{20pt}\cdot \int_0^\infty
e^{-4\pi(y_1+y_2+y_3)(1+t)+2\pi y_3}\, t^s(1+t)^{s+k}\,\dti
y_j\,dt\displaybreak[0]\\
&= \frac{(2\pi)^{2s+2}\,k!}{\Gamma(s+1)\,\Gamma(s+k+1)}\,
\frac{\Gamma(s+k)^2}{(4\pi)^{2s+2k}}\int_{\R^+}y_3^{s}\,
w_{0,s_3}(4\pi y_3)\\&\hspace{20pt}\cdot\int_0^\infty
e^{-2\pi y_3(1+2t)}\, t^s(1+t)^{-s-k}\,\dti y_3\,dt\displaybreak[0]\\
&=2^{-2s-4k+2}\,\pi^{-2k+2}\, \frac{k!\,\Gamma(s+k)}
{\Gamma(s+1)\,(s+k)}\,\frac{\Gamma(s+1)}{(4\pi)^s}\\
&\hspace{20pt}\cdot\int_{\R^+}(4\pi y_3)^{s+\frac{k}2-1}\,
w_{-s-\frac{k}2,\frac{1-k}{2}}(4\pi y_3)\, w_{0,s_3}(4\pi y_3)\,
d\ti y_3\displaybreak[0]\\
&=2^{-4s-4k+2}\,\pi^{-s-2k+2}\, \frac{k!\,\Gamma(s+k)}{(s+k)}\\
&\hspace{20pt}\cdot\sum_{\ \{\pm\}^1}\frac{\Gamma(s+\hf\pm s_3)\,
\Gamma(s+k-\hf\pm s_3)\,\Gamma(\mp 2s_3)}{\Gamma(\hf\mp s_3)\,
\Gamma(2s+k+\hf\pm s_3)}\\&\hspace{40pt}\cdot\, {}_3F_2
\left[\begin{mat}{ccc} s+\hf\pm s_3,&s+k-\hf\pm s_3,&\hf\pm
s_3;\vspace{2pt}\\1\pm2s_3,&2s+k+\hf\pm s_3;&1\end{mat}\right].
\end{align*}
${}_3F_2$ denotes a generalized hypergeometric series. We can
simplify this further at $s=0$ using the three-term relation,
\begin{align*}
{}_3F_2\left[\begin{mat}{ccc}a,&b,&c;\\d,&e;&1\end{mat}\right]
&=\sum_{b\leftrightarrow c}\frac{\Gamma(1-a)\,\Gamma(d)\,\Gamma(e)\,
\Gamma(c-b)}{\Gamma(d-b)\,\Gamma(e-b)\,\Gamma(1+b-a)\,\Gamma(c)}\\
&\hspace{20pt}\cdot\,{}_3F_2\left[\begin{mat}{ccc}b,&1+b-d,&1+b-e
;\vspace{2pt}\\1+b-c,&1+b-a;&1\end{mat}\right].
\end{align*}

\begin{align*}
\lefteqn{Z_\infty(0,\check W_\infty^{k}\times\check W_\infty^{k}
\times\check W_\infty^{0},\vpb_\infty)}\hspace{40pt}\\[6pt]
&=\frac{\Gamma(k)^2}{2^{4k-2}\,\pi^{2k-2}}\sum_{\ \{\pm\}^1}
\frac{\Gamma(\hf\pm s_3)\,\Gamma(k-\hf\pm s_3)\,
\Gamma(\mp2s_3)}{\Gamma(\hf\mp s_3)\,\Gamma(k+\hf\pm s_3)\,}\\
&\hspace{80pt}\cdot\, {}_3F_2\left[\begin{mat}{ccc}\hf\pm s_3,&
k-\hf\pm s_3,&\hf\pm s_3;\vspace{2pt}\\1\pm2s_3,&k+\hf\pm s_3;&1
\end{mat}\right]\displaybreak[0]\\
&=\frac{\Gamma(k)}{2^{4k-2}\,\pi^{2k-2}}\sum_{\ \{\pm\}^1}
\Gamma(\hf\mp s_3)\,\Gamma(-k+\tfrac32\mp s_3)\,\Gamma(\hf\pm s_3)\,
\Gamma(k-\hf\pm s_3)\\&\hspace{80pt}\cdot\,\frac{\Gamma(k)\,
\Gamma(\mp2s_3)}{\Gamma(\hf\mp s_3)\,\Gamma(-k+\tfrac32\mp s_3)\,
\Gamma(\hf\mp s_3)\,\Gamma(k+\hf\pm s_3)}\\&\hspace{100pt}\cdot\,
{}_3F_2\left[\begin{mat}{ccc} \hf\pm s_3,&k-\hf\pm s_3,&\hf\pm
s_3;\vspace{2pt}\\1\pm2s_3,& k+\hf\pm s_3;&1\end{mat}\right]
\displaybreak[0]\\
&=\frac{\Gamma(k)}{2^{4k-2}\,\pi^{2k-2}}\,\frac{(-1)^{k-1}\,\pi^2}
{\cos^2(\pi s_3)}\lim_{f\to2-k}\frac{1}{\Gamma(f)}\\&\hspace{20pt}
\sum_{\ \{\pm\}^1}\frac{\Gamma(k)\,\Gamma(1)\,\Gamma(f)\,\Gamma(\mp2s_3)}
{\Gamma(\hf\mp s_3)\,\Gamma(f-\hf\mp s_3)\,\Gamma(k+\hf\pm s_3)\,
\Gamma(\hf\mp s_3)}\\&\hspace{40pt}\cdot\,{}_3F_2\left[\begin{mat}{ccc}
\hf\pm s_3,&\hf\pm s_3,&\frac32\pm s_3-f; \vspace{2pt}\\
1\pm2s_3,&k+\hf\pm s_3;&1\end{mat}\right]\displaybreak[0]\\
&=\frac{\Gamma(k)\,(-1)^{k-1}\,\pi^2}
{2^{4k-2}\,\pi^{2k-2}\,\cos^2(\pi s_3)} \lim_{f\to2-k}
\frac1{\Gamma(f)}\ {}_3F_2\left[\begin{mat}{ccc}
-k+1,&\hf+s_3,& \hf-s_3;\vspace{2pt}\\1,&f;&1\end{mat}\right]
\\&=\frac{\Gamma(k)\,(-1)^{k-1}\,\pi^2}
{2^{4k-2}\,\pi^{2k-2}\,\cos^2(\pi s_3)} \sum_{n=k-1}^\infty
\frac{(-k+1)_n\,(\hf+s_3)_n\,(\hf-s_3)_n}
{n!\,(1)_n\,\Gamma(2-k+n)}\displaybreak[0]\\
&=\frac{\Gamma(k)\,(-1)^{k-1}\,\pi^2}
{2^{4k-2}\,\pi^{2k-2}\,\cos^2(\pi s_3)}\,
\frac{(-1)^{k-1}\,\Gamma(k)\,\Gamma(k-\hf+s_3)\,\Gamma(k-\hf-s_3)}
{\Gamma(k)^2\,\Gamma(\hf+s_3)\, \Gamma(\hf-s_3)}
\\&=2^{-4k+2}\,\pi^{-2k+2}\,\Gamma(k-\hf+s_3)\,\Gamma(k-\hf-s_3)\,
\Gamma(\hf+s_3)\,\Gamma(\hf-s_3)\\
&=\frac{2^{-2k-2}}
{\zeta_\R(2)^2}\,L_\infty(\hf,\pi_1\times\pi_2\times\pi_3).
\end{align*}
This is consistent with the previous case if we set
$s_3=-\tfrac12$\,. It would be interesting to directly factor the
function $Z_\infty(s)$, for either $k=0$ or $s_3=-\tfrac12$, by
comparing with the previous cases.

\subsubsection{Non-Archimedean}

First consider $p\nmid d_BN$. A more illuminating treatment was
given in \cite{psr}, however our calculation is elementary and
generalizes in an obvious way to the ramified cases. Note that the
restriction $p\neq3$ is unnecessary.

Writing $\alpha=\left[\begin{mat}{cc}a&\\ &1\end{mat}\right],
g_1=\left[\begin{mat}{cc}a&x\\ &1\end{mat}\right],
g_j=\left[\begin{mat}{cc}aa_j&\\ &a_j^{-1}\end{mat}\right]$ for
$j=2,3$, and choosing $h'=\rho\left(\alpha^\iota,1\right)$, we
compute
\begin{align*}
\Phi_p(0,\gamma_0
g)&=\int_{M_{2,\Q_p}}\omega(g)\vpb_p(\beta,\beta,\beta)\,d\beta\\
&=|a^3a_2^2a_3^2|_p\int_{M_{2,\Q_p}}\e_p(x\nu(\beta))\,
\vp_{p1}(\alpha\beta)\,\vp_{p2}(\alpha a_2\beta)\,
\vp_{p3}(\alpha a_3\beta)\,d\beta\\
&=\zeta_p(2)^3\,|a^3a_2^2a_3^2|_p\int_{(\Q_p)^4}
\e_p(x(\beta_{11}\beta_{22}-\beta_{12}\beta_{21}))\\
&\hspace{10pt}\cdot\,\ind_{\Z_p}(a\tilde a\beta_{11})\,
\ind_{\dot N\Z_p}(a\tilde a\beta_{12})\, \ind_{\ddot N\Z_p}(\tilde
a\beta_{21})\, \ind_{\Z_p}(\tilde a\beta_{22})\,d\beta\\
&\hspace{40pt}(\ |\tilde a |_p:=\max\{1,|a_2|_p,|a_3|_p\}\ )\\
&=\zeta_p(2)^3\,|a^3a_2^2a_3^2|_p\int_{\Q_p}|a\tilde a|_p^{-1}\,
\ind_{a\tilde a\Z_p}(x\beta_{22})\,\ind_{\Z_p}(\tilde a\beta_{22})
\,d\beta_{22}\\&\hspace{10pt}\cdot \int_{\Q_p}|\dot N|_p\,|a\tilde
a|_p^{-1}\,\ind_{a\tilde a\ddot N \Z_p}(x\beta_{21})
\,\ind_{\ddot N\Z_p}(\tilde a\beta_{21})\,d\beta_{21}\\
&=\zeta_p(2)^3\,|a^3a_2^2a_3^2|_p\,|z|_p^{-2}\\
&\hspace{40pt}(\ |z|_p:=\max\{|a\tilde a^2|_p,|x|_p\}\ ),\\[12pt]
\Phi_p(s,\gamma_0g)&=\zeta_p(2)^3\,|a^3a_2^2a_3^2|^{s+1}_p\,
|z|_p^{-2s-2}.
\end{align*}
It is straightforward to check that for $|a|_p\le1$,
\begin{align*}
\int_{\Q_p} |z|_p^{\lambda-1}\,\e_p(x)\,dx=\left(
\tfrac{1-p^{\lambda-1}}{1-p^{-\lambda}}\right)
(1-|pa\tilde a^2|_p^\lambda).
\end{align*}
Then
\begin{align*}
Z_p(s,\check\Wb\!_p^{\,0},\vpb_p)&=\int_{(\Q_p\ti)^3}\int_{\Q_p}
|a^3a_2^2a_3^2|^{s+1}\,|z|^{-2s-2}\,\e_p(x)\left(\tfrac{|ap|^{-\dot
s_1}-|ap|^{-\ddot s_1}}{|p|^{-\dot s_1}-|p|^{-\ddot s_1}}\right)
|a|^{\frac12}\,\ind_{\Z_p}(a)\\&\hspace{20pt}\cdot
\left(\tfrac{|aa_2p|^{-\dot s_2}|a_2|^{\ddot s_2}-|aa_2p|^{-\ddot
s_2}|a_2|^{\dot s_2}}{|p|^{-\dot s_2}-|p|^{-\ddot s_2}}\right)
|aa_2^2|^{\frac12}\,\ind_{\Z_p}(aa_2^2)\\&\hspace{20pt}\cdot
\left(\tfrac{|aa_3p|^{-\dot s_3}|a_3|^{\ddot s_3}-|aa_3p|^{-\ddot
s_3}|a_3|^{\dot s_3}}{|p|^{-\dot s_3}-|p|^{-\ddot
s_3}}\right)|aa_3^2|^{\frac12}\,\ind_{\Z_p}(aa_3^2)\\
&\hspace{20pt}\cdot\,|a^3a_2^2a_3^2|^{-1}\,dx
\,d\ti\!a\,d\ti\!a_2\,d\ti\!a_3\\
&=\int_{|a|\le1}\int_{|a_2|\le|a|^{-\frac12}}\int_{|a_3|\le
|a|^{-\frac12}}\left(\tfrac{1-p^{-2s-2}} {1-p^{2s+1}}\right)
(1-|pa\tilde a^2|^{-2s-1})\\&\hspace{20pt}\cdot
\left(\tfrac{|ap|^{-\dot s_1}-|ap|^{-\ddot s_1}}{|p|^{-\dot
s_1}-|p|^{-\ddot s_1}}\right) \left(\tfrac{|aa_2p|^{-\dot
s_2}|a_2|^{\ddot s_2}-|aa_2p|^{-\ddot s_2}|a_2|^{\dot
s_2}}{|p|^{-\dot s_2}-|p|^{-\ddot s_2}}\right)\\
&\hspace{20pt}\cdot\left(\tfrac{|aa_3p|^{-\dot s_3}
|a_3|^{\ddot s_3}-|aa_3p|^{-\ddot s_3}|a_3|^{\dot s_3}}
{|p|^{-\dot s_3}-|p|^{-\ddot s_3}}\right)
|a^3a_2^2a_3^2|^{s+\frac12}\,d\ti\!a\,d\ti\!a_2\,d\ti\!a_3.
\end{align*}
Now write $|a|_p=p^{-k}$, $|a_j|_p=p^{-k_j}$, so
\begin{align*}
Z_p(s,\check\Wb\!_p^{\,0},\vpb_p)={}&\sum_{k\ge0}\
\sum_{k_2\ge -\frac{k}2}\ \sum_{k_3\ge-\frac{k}2}
S(k,k_2,k_3,\min\{k,k+2k_2,k+2k_3\}),\\
S(k,k_2,k_3,l):={}&\left(\tfrac{1-p^{-2s-2}} {1-p^{2s+1}}\right)
p^{-(3k+2k_2+2k_3)(s+\frac12)}\\&\hspace{10pt}\cdot
\left(1-p^{(l+1)(2s+1)}\right)\left(\tfrac{p^{(k+1)\dot
s_1}-p^{(k+1)\ddot s_1}}{p^{\dot s_1}-p^{\ddot s_1}}\right)\\
&\hspace{10pt}\cdot\left(\tfrac{p^{(k+k_2+1)\dot s_2-k_2\ddot
s_2}-p^{(k+k_2+1)\ddot s_2-k_2\dot s_2}}{p^{\dot s_2}-p^{\ddot
s_2}}\right)\\&\hspace{10pt}\cdot\left(\tfrac{p^{(k+k_3+1)\dot
s_3-k_3\ddot s_3}-p^{(k+k_3+1)\ddot s_3-k_3\dot s_3}}{p^{\dot
s_3}-p^{\ddot s_3}}\right).
\end{align*}
We can make the sums independent:
\begin{align*}
Z_p(s,\check\Wb\!_p^{\,0},\vpb_p)&=\sum_{\mu=0,1}\ \sum_{k'\ge0}
\ \sum_{k_2'\ge0}\ \sum_{k_3'\ge0}\\&\hspace{40pt}
S(2k'+\mu,k_2'-k',k_3'-k',2\min\{k',k_2',k_3'\}+\mu)\\
&=\sum_{\eta=0,1}\ \sum_{l\ge0}\ \sum_{\mu=0,1}\ \sum_{k''\ge0}\
\sum_{k_2''\ge0}\ \sum_{k_3''\ge0}\\&\hspace{40pt}
(-1)^{\eta}\,S(2(k''+l+\eta)+\mu,k_2''-k'',k_3''-k'',2l+\mu).
\end{align*}
After multiplying out the summand as a polynomial and rearranging
the exponents (as linear combinations of
$\{1,l,k'',k''_2,k''_3\}$), we recognize $Z_p$ to be a finite sum
of products of geometric series.  This evaluates to a complicated
rational expression, which we factor using Mathematica. It is
necessary to impose the constraint on central characters, $\sum_j
(\dot s_j+\ddot s_j)=0$, e.g.\ by substitution for $\dot s_1$.
Then
\begin{align*}
Z_p(s,\check\Wb\!_p^{\,0},\vpb_p)=\frac{L_p(s+\frac12,
\pi_1\times\pi_2\times\pi_3)}{\zeta_p(2s+2)\,\zeta_p(4s+2)}.
\end{align*}

In the cases $p\mid d_B N^\sharp$, the difficult ramified zeta
integrals were calculated by Gross--Kudla \cite{gk}. We have
quoted their results, taking into account our differing
normalizations of measures and local data.
\end{proof}

\chapter{Conclusion}

\section{Triple Product Identities}

Let $\psi_j\in S_{k_j}(d_B,\vec N)$ be three Hecke-eigen
newforms of the same square-free level $N$, with
$k_1+k_2+k_3=0$, and let $\varrho_j$ denote the corresponding
Langlands parameters. Define $\veps_v=\prod_j\veps_{jv}$ for
the eigenvalues $\veps_{jv}$ as in \S\ref{autr}:\vspace{-10pt}
\begin{alignat*}{2}
T_\infty^-\,\psi_j&=\phantom{-}\veps_{j\infty}\,\psi_j&
\hspace{20pt}&\text{for \,}k_j=0,\\
T_p^{[p]}\,\psi_j&=-\veps_{jp}\,\psi_j&&\text{for \,}p\mid
d_BN.
\end{alignat*}
Writing $X=\ord^{(1)}(d_B,\vec N)\lmod\h$, we have
\begin{thm}\label{mth}
\begin{gather*}
\frac{\displaystyle\left|\int_{X}\psi_1(z)\,\psi_2(z)\,\psi_3(z)\,
\dvh{x}{y}\right|^2}{\displaystyle\sideset{}{_j}\prod\int_X
|\psi_j(z)|^2\,\dvh{x}{y}}=\big(\textstyle\prod_v Q_v\big)\,
\displaystyle\frac{2^{\#\{p\mid d_BN\}-3}}{(d_BN)^2}\,
\frac{L^*(\tfrac12,\varrho_1\otimes\varrho_2\otimes\varrho_3)}
{\text{$\prod_j L^*(1,\Ad\varrho_j)$}},\\[8pt]
Q_\infty =\begin{cases}
\frac{1+\veps_\infty}2 & \text{if }k_1=k_2=k_3=0,\\
\hspace{9.6pt}1 & \text{if }|k_1|=|k_2|>|k_3|=0,\\
\hspace{9.6pt}2 & \text{if }|k_1|>|k_2|\ge|k_3|>0,
\end{cases}\hspace{30pt}
Q_p=
\begin{cases}
\frac{1-\veps_p}2 & \text{if }p\mid d_B,\\
\frac{1+\veps_p}2 & \text{if }p\mid N,\\
\hspace{8pt}1 & \text{if }p\nmid d_BN.
\end{cases}
\end{gather*}
\end{thm}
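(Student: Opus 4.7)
The theorem is the culmination of all the machinery developed above, combined via the Harris--Kudla seesaw. The strategy is to express $|\int_X\psi_1\psi_2\psi_3|^2$ as a Garrett--Piatetski-Shapiro--Rallis triple-product zeta integral on $\Gb=G^3\cap H$, then read off the central $L$-value and the local constants from Theorem~\ref{zeta}, Lemma~\ref{ransel}, and the normalizations in \S\ref{locdata}--\S\ref{tmeas}.

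\noindent\textbf{Step 1 (Adelic reformulation).}  Passing to the adelic picture via the isomorphism of \S\ref{autf}, and using $k_1+k_2+k_3=0$ so that $\Psi_1\Psi_2\Psi_3$ has trivial central character and descends to $PB^\times_\Q\lmod PB^\times_\A$, one has
\[
\int_X \psi_1\psi_2\psi_3\,\dvh{x}{y}
=2\int_{PB^\times_\Q\lmod PB^\times_\A}\Psi_1(\beta)\Psi_2(\beta)\Psi_3(\beta)\,\dti\beta,
\qquad
\int_X|\psi_j|^2\,\dvh{x}{y}=2\langle\Psi_j,\Psi_j\rangle.
\]
Squaring and rewriting via the Shimizu parametrization $\rho\colon PB^\times\!\times PB^\times\to P\breve H'$, the quantity $|I|^2$ becomes the integral of $\prod_j F'_j(\rho(\dot\beta,\ddot\beta))$ over $P\breve H'_\Q\lmod P\breve H'_\A$, where each $F'_j(\rho(\dot\beta,\ddot\beta))=\Psi_j(\dot\beta)\,\ov{\Psi_j(\ddot\beta)}$ is the automorphic form on $H'$ introduced in \S\ref{adj}.

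\noindent\textbf{Step 2 (Seesaw reduction to a PSR integral).}  I now invoke the Harris--Kudla seesaw
\[
\begin{array}{ccc}
H=\GSp_6 & \quad & \Gbp=H'^3\cap\GO(B^3)\\
\cup && \cup\\
\Gb=G^3\cap H & \quad & H'\text{ (diagonal)}
\end{array}
\]
against the matching local data $\vpb=\vp_1\times\vp_2\times\vp_3$ of \S\ref{locdata}.  By the see-saw identity (a triple version of the adjoint-Shimizu identity of Theorem~\ref{adjshim}, applied to each factor of $H'^3$), the pairing of $\prod F'_j$ with the theta kernel $\Theta_\vpb$ on $\Gbp\times\Gb$ equals its pairing with $\prod F_j$, where $F_j\in\tilde S_{|k_j|}(1,(1,d_BN),1,1)\cap\pi_j^{\rm JL}$ is the Hecke-normalized Jacquet--Langlands correspondent of $\Psi_j$.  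Each application of Theorem~\ref{adjshim} contributes the ratio $\|F_j\|^2/\|\Psi_j\|^2$.  The theta kernel restricted to $\Gb$ is precisely the Siegel section $\Phi_\vpb(0,\cdot)$ appearing in PSR, so
\[
|I|^2=\text{(explicit constant)}\cdot\bigg(\prod_j\frac{\|F_j\|^2}{\|\Psi_j\|^2}\bigg)^{-1}\cdot Z(0,\widehat{\ov{\Fb}},\vpb).
\]

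\noindent\textbf{Step 3 (Factorization and evaluation).}  By cuspidality of the $F_j$, the PSR integral $Z(s,\widehat{\ov\Fb},\vpb)$ unfolds and factors as $\prod_v Z_v$; Theorem~\ref{zeta} then gives
\[
Z(0,\widehat{\ov\Fb},\vpb)=\frac{\prod_v C_v}{\prod_v\zeta_v(2)^2}\,L^*\!\bigl(\tfrac12,\varrho_1\otimes\varrho_2\otimes\varrho_3\bigr).
\]
Substituting Lemma~\ref{ransel} for each $\|F_j\|^2$, the $\zeta^*(2)$ poles arrange themselves (the three copies in the denominators from adjoint $L$-values absorb a $\zeta^*(2)^3$ in the numerator coming from the PSR $\prod_v\zeta_v(2)^{-2}$ after a $\zeta^*(2)$ from the Tamagawa volume $\vol(PB^\times_\Q\lmod PB^\times_\A)=2$), leaving the quotient $L^*(1/2,\cdot)/\prod_j L^*(1,\Ad\varrho_j)$ as stated.

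\noindent\textbf{Step 4 (Assembly of local constants).}  Finally I collect the local constants.  At each place $v$, the factor $Q_v$ is the combination of $C_v$ from Theorem~\ref{zeta} (the PSR local zeta), $\prod_j c_{jv}^{-1}$ from Lemma~\ref{ransel} (the adjoint normalizations), and the volume $\volx(\ordx_v)$ from \S\ref{tmeas}.  The global prefactor $2^{\#\{p\mid d_BN\}-3}(d_BN)^{-2}$ arises from reconciling the Petersson normalization (LHS) with the Tamagawa normalization (RHS), taking into account $\vol(B_\infty/\ord(d_B,\vec N))=d_BN$ and the factors of $2$ from the centers. The archimedean parity factor $(1{+}\veps_\infty)/2$ when all $k_j=0$ reflects the vanishing of $Z_\infty$ for odd total parity, visible already in the $(1-\delta)$ factor in the proof of Theorem~\ref{zeta}.

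\noindent\textbf{Principal obstacle.}  The most delicate point is the sign structure of $Q_p$ at the finite ramified primes, which produces $\tfrac{1-\veps_p}{2}$ when $p\mid d_B$ but $\tfrac{1+\veps_p}{2}$ when $p\mid N$.  This asymmetry reflects two things that must be tracked carefully: (i) the $(-1)^{\ind_{v\mid d_B}}$ sign in the Weil representation formula for $\omega(\gamma_0)\vpb_v(0)$, and (ii) the opposite conventions for $\veps_p$ as a $T_p^{[p]}$-eigenvalue on $B^\times$-side (sign changes $-\veps_p$) vs.\ as $R_p(\vec N)$-eigenvalue on the GL$_2$-side.  Matching these signs against the $C_p$ of Theorem~\ref{zeta}, which differs in the $(p-1)/(p+1)$ factor between the $d_B$ and $N$ cases, is precisely what produces the stated asymmetric $Q_p$.
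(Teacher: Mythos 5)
Your proposal is correct and follows essentially the same route as the paper: the Harris--Kudla see-saw together with Theorem~\ref{adjshim} reduces the squared triple integral to $Z(0,\ov{\Fb},\vpb)\big/\prod_j\|F_j\|^2$, and Theorem~\ref{zeta}, Lemma~\ref{ransel}, and $\vol(X)=2\zeta^*(2)\prod_{p\mid d_B}(p-1)\prod_{p\mid N}(p+1)$ assemble the constants exactly as you describe. The only loose phrasing is in your Step 2: it is not the theta kernel restricted to $\Gb$ but the theta lift $\Theta'_{\vpb}(1)$ of the constant function that equals $\tfrac12E(0,h,\Phi)$, via the Siegel--Weil formula, which the paper invokes explicitly at this point.
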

\begin{proof}
Define the corresponding adelic forms $\Psi_j$ on $B_\A\ti$,
$F_j$ on $G_\A$, $F'_j$ on $H'_\A$, and the local data
$\vp_j\in\Sz(B_\A)$, as in \S\ref{autf},\,\S\ref{jls}. As
Harris--Kudla showed, it follows from the see-saw identity,
\begin{align*}
\big\langle 1,\ov{\Theta_{\vpb}(\ov{\Fb})}\big\rangle_{PH'_\A}
=\big\langle\Theta'_{\vpb}(1),\Fb\big\rangle_{P\Gb_\A},
\end{align*}
and the Siegel--Weil formula,
$\Theta'_{\vpb}(1)=\tfrac12E(0,h,\Phi),$ that
\begin{align*}
\int_{PH'_\Q\lmod PH'_\A} \Theta_{\vpb}(\ov{\Fb})(h')\,dh'
=Z(0,\ov{\Fb},\vpb).
\end{align*}
Then by Theorem \ref{adjshim},
\begin{align*}
\frac{\displaystyle\left|\int_{PB\ti_\Q\lmod PB\ti_\A}
\Psi_1(\beta)\,\Psi_2(\beta)\,\Psi_3(\beta)\,\dti\beta\right|^2}
{\prod_j\big\langle\Psi_j,\Psi_j\big\rangle_{PB\ti_A}}
=\frac{Z(0,\ov{\Fb},\vpb)}{\prod_j\big\langle F_j,F_j\big\rangle_{PG_\A}},
\end{align*}
and by Theorem \ref{zeta} and Lemma \ref{ransel},
\begin{align*}
\frac{\displaystyle\left|\int_{X}\psi_1(z)\,\psi_2(z)\,\psi_3(z)\,
\dvh{x}{y}\right|^2} {\displaystyle\sideset{}{_j}\prod\int_{X}
|\psi_j(z)|\,\dvh{x}{y}}=\frac{\zeta^*(2)}{4\,\vol(X)}\,
\frac{\prod_vC_v}{\prod_{j,v}c_{jv}}\,\frac{L^*(\frac12,
\varrho_1\otimes\varrho_2\otimes\varrho_3)}
{\prod_jL^*(1,\Ad\varrho_j)}.
\end{align*}
Noting that
\begin{align*}
\vol(X)=2\zeta^*(2)\textstyle\prod_{p\mid d_B}(p-1)
\prod_{p\mid N}(p+1),
\end{align*}
it is straightforward to compute the constants.
\end{proof}

\section{Applications to Quantum Chaos}

Consider the geodesic flow on a finite volume hyperbolic
surface $X=\Gamma\lmod\h$.  This is a classical Hamiltonian
dynamical system with phase space $T^*\!X$, the cotangent
bundle of $X$, and Hamiltonian \hbox{$H(v^*_x)=|v^*_x|^2$}, the
Riemannian norm squared.  We restrict our attention to
$S^*\!X$, the constant energy submanifold of unit cotangent
vectors, which is invariant under the flow.  Under the
homeomorphic identification
\begin{gather*}
S^*\!X\stackrel\sim\longrightarrow
\Gamma\lmod\PGL_2^+(\R),\\
(dy)_\ic\mapsto\Gamma \left[\begin{mat}{cc}1 &\\& 1
\end{mat}\right]\hspace{-2.5pt},
\end{gather*}
the flow map for time $t$ is given explicitly by right
multiplication,\vspace{-3pt}
\begin{align*}
T_t(\Gamma g)=\Gamma g\left[\begin{mat}{cc}e^t &\\& 1
\end{mat}\right]\hspace{-2.5pt}.
\end{align*}
The normalized Liouville measure on $S^*\!X$ is ergodic for the
geodesic flow and has entropy equal to 1. Furthermore,
Ornstein--Weiss proved this system is measurably isomorphic to a
Bernoulli flow. Topologically, the geodesic flow is less simple.
There are no stable periodic orbits, but since the flow is Anosov,
unstable periodic orbits form a dense subset. Each of these closed
geodesics obviously carries an ergodic measure, but there are also
ergodic measures with supports having any Hausdorff dimension
between 1 and 3.

The standard quantization of the classical geodesic flow has state
space $L^2(X)$ and Hamiltonian $\tilde H=-\Delta$, a
pseudo-differential operator with principal symbol $H$.  For a
particle described by the normalized quantum state $\psi\in
L^2(X)$, measuring its position is equivalent to sampling an
$X$-valued random variable defined by the probability distribution
\begin{align*}
d\mu(z)=|\psi(z)|^2\,\dvh{x}{y}\hspace{30pt}
\big(\,\|\psi\|_{L^2}=1\,\big).
\end{align*}
Similarly, measuring the particle's energy is equivalent to
sampling a ${\rm Spec}(\tilde H)$-valued random variable,
defined according to the spectral expansion of $\psi$.  If $X$
is compact, then $L^2(X)$ has an orthonormal basis of
eigenforms,
\begin{align*}
\tilde H\psi_j=\lambda_j\psi_j\,,\hspace{30pt}
0=\lambda_0<\lambda_1\le\lambda_2\le\ldots\,,
\end{align*} in which case the
corresponding spectral measure is simply
\begin{align*}
\sideset{}{_j}\sum|\langle\psi,\psi_j\rangle|^2\,\delta_{\lambda_j}.
\end{align*}
For non-compact $X$, $\tilde H$ also has absolutely continuous
spectrum equal to $\big[\frac14,\infty\big)$.  All of the
corresponding (non-square-integrable) eigenspaces are spanned by
unitary Eisenstein series and have dimension equal to the number
of cusps of $X$.

The special arithmetic $X$ we will consider are distinguished by
their numerous symmetries, giving rise to Hecke operators. Since
$\Hal^\star$ is commutative, it represents a collection of
simultaneously observable quantities.  Furthermore, since $\tilde
H\in\Hal_\infty^\star$\,, we may assume that the $\psi_j$
described above are in fact $\Hal^\star$-eigenforms.  As we saw in
\hbox{Lemma \ref{basis}} and the discussion before Lemma
\ref{eismth}, $\Hal^\star$ completely decomposes $L^2(X)$ with
multiplicity 1.  (Note that for these $X$, the residual spectrum
consists of $\{0\}$, i.e.\ all $\psi_j$ with $j>0$ are cuspidal.)

Quantum chaos is concerned with the behavior of eigenstates in
quantizations of classically chaotic systems, particularly
under semi-classical/high energy limits \cite{schur}. An
important general result due to
Colin-de-Verdiere/Shnirelman/Zelditch is that
\begin{align*}
\frac1N\sum_{j=1}^N\Big(\int_X f\,d\mu_j-\int_X f\,d\mu_0\Big)^2
\longrightarrow0\hspace{20pt}\text{as \,}N\to\infty,\text{ \,for all \,}
f\in C^\infty_c(X).
\end{align*} This is called
quantum ergodicity, and it is equivalent to the assertion that
almost all high energy states are nearly equidistributed. The
only assumption on $X$ is ergodicity of the Liouville measure
under geodesic flow, one of the mildest chaotic properties
ensured by negative curvature. However, quantum ergodicity does
not exclude the possibility that there may be other weak limits
of the $\mu_j$ along subsequences, a phenomenon known as
scarring. Scarring has been observed numerically in some
related systems, such as the Bunimovich Stadium billiard. In
the special cases of congruence hyperbolic surfaces (and
3-manifolds), Rudnick--Sarnak ruled out strong scarring along
closed geodesics \cite{rudsar}, leading them to conjecture that
\emph{all} high energy states become equidistributed.  This is
called quantum unique ergodicity (QUE). In \cite{luosar},
Luo--Sarnak made a quantitative formulation of the QUE
conjecture, predicting the rate of equidistribution: For all
$f\in C^\infty_c(X)$,
\begin{align*}
\int_X f\,d\mu_j-\int_X f\,d\mu_0 \ll_{f,\eps}\lambda_j^{-\frac14+\eps}
\hspace{20pt}\text{as \,}\lambda_j\to\infty.
\end{align*}
They proved this, in the case $X=\SL_2\Z\lmod\h$, on average
over $\mu_j$ and for the individual measures associated to
unitary Eisenstein series. We now reduce their quantitative
conjecture for individual $\mu_j$ to appropriate Lindel\"of
Hypotheses, using triple product identities.  To quantify the
smoothness of $f$, we use the Sobolev norms
\begin{align*}
\big\|f\big\|_{L^{2,r}}= \big\|(1-\Delta)^{\frac{r}2}
f\big\|_{L^{2}}\hspace{20pt}\text{for \,}r\in\R.
\end{align*}
Let $L^{2,r}(X)$ denote the Hilbert space completion of
$C^\infty_c(X)$ under $\|\,\|_{L^{2,r}}$.

\begin{thm}\label{sob}
Fix $X=\ord^{(1)}\hspace{-2pt}(d_B)\lmod\h$ and let the
$\psi_j\in L^2(X)$ be as before, with Langlands parameters
$\varrho_j$. Suppose that for some $0\le a\le1$ and all
$j,j'\ge1$,
\begin{align*}
L(\tfrac12,\varrho_j\otimes\ov{\varrho_j}\otimes\varrho_{j'})&\ll_{X,\eps}
C_\infty(0,\varrho_j\otimes\ov{\varrho_j}\otimes\varrho_{j'})
^{\frac{a}4+\eps}\text{,\hspace{10pt}and also}\\
L(\tfrac12,\varrho_j\otimes\ov{\varrho_j}\otimes\varrho_E)&\ll_{X,\eps}
C_\infty(0,\varrho_j\otimes\ov{\varrho_j}\otimes\varrho_E)^{\frac{a}4+\eps}
\hspace{10pt}\text{if }d_B=1.
\end{align*}
Then for $0\le r\le\frac{a+1}2$ and $\lambda_j\ge\tfrac14$,
\begin{align*}
\big\||\psi_j|^2-\tfrac1{\vol(X)}\big\|_{L^{2,-r}}\ll_{X,\eps}
\lambda_j^{\frac{a-r}2+\eps}\,.
\end{align*}
\end{thm}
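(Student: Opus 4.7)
The strategy is to expand $|\psi_j|^2-\tfrac{1}{\vol(X)}$ in the spectral basis of $L^2(X)$, use Theorem~\ref{mth} to identify each Fourier coefficient with a central triple-product $L$-value, insert the hypothesized subconvex bound, and then estimate the resulting spectral sum against the weight $(1+\lambda_{j'})^{-r}$.

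First I would normalize $\|\psi_j\|_{L^2}=1$. The projection of $|\psi_j|^2$ onto the constants equals $\tfrac{1}{\vol(X)}$, and the residual spectrum is trivial, so
\[
|\psi_j|^2-\tfrac{1}{\vol(X)}=\sum_{j'\ge 1}c_{j'}\,\psi_{j'}+\text{(continuous spectrum, if $d_B=1$)},
\]
with $c_{j'}=\int_X|\psi_j|^2\,\ov{\psi_{j'}}\,\dvh{x}{y}$. Parseval yields
\[
\big\||\psi_j|^2-\tfrac{1}{\vol(X)}\big\|_{L^{2,-r}}^2=\sum_{j'\ge 1}\frac{|c_{j'}|^2}{(1+\lambda_{j'})^r}+\text{(Eisenstein analogue)}.
\]
Theorem~\ref{mth} applied to $(\psi_j,\ov{\psi_j},\psi_{j'})$, whose weights trivially sum to zero, gives
\[
|c_{j'}|^2\asymp_X\frac{L^*(\tfrac12,\varrho_j\otimes\ov{\varrho_j}\otimes\varrho_{j'})}{L^*(1,\Ad\varrho_j)^2\,L^*(1,\Ad\varrho_{j'})},
\]
which vanishes unless $\varrho_{j'}$ is even (owing to the parity factor $Q_\infty$).

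Next I would split each completed $L^*$ into archimedean and finite parts. Reading off the Triple Product Parameters section with $t_k=\sqrt{\lambda_k-\tfrac14}$,
\[
C_\infty(0,\varrho_j\otimes\ov{\varrho_j}\otimes\varrho_{j'})\asymp(1+t_{j'})^4\,(1+|2t_j+t_{j'}|)^2\,(1+|2t_j-t_{j'}|)^2.
\]
A direct Stirling calculation shows that the ratio of archimedean $\Gamma$-factors has polynomial size $\asymp C_\infty^{-1/4}$, while the exponential factors exactly cancel in the range $t_{j'}\le 2t_j$ and produce an additional factor $e^{-\pi(t_{j'}-2t_j)}$ beyond it. Combined with the hypothesis and with the Hoffstein--Lockhart lower bound $L(1,\Ad\varrho_k)\gg_\eps(1+\lambda_k)^{-\eps}$, one obtains
\[
|c_{j'}|^2\ll_\eps\begin{cases}(1+t_{j'})^{a-1}\,t_j^{a-1+\eps}&\text{if }t_{j'}\le 2t_j,\\ e^{-\pi(t_{j'}-2t_j)}\,(1+t_{j'})^{O(1)}&\text{if }t_{j'}>2t_j.\end{cases}
\]

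Finally, plugging into Parseval and applying Weyl's law (density $1$ in the parameter $t$), the second regime contributes only negligibly and the main sum is dominated by
\[
t_j^{a-1+\eps}\int_0^{2t_j}(1+t)^{a-1-2r}\,dt.
\]
A direct integration gives $\ll t_j^{2(a-r)+\eps}=\lambda_j^{(a-r)+\eps}$ uniformly for $0\le r\le\tfrac{a+1}2$, and taking square roots yields the claimed Sobolev bound. The Eisenstein contribution in the case $d_B=1$ is handled analogously via Lemma~\ref{eismth}, substituting $\varrho^E=\|\,\|^{\ic t}\oplus\|\,\|^{-\ic t}$ for $\varrho_{j'}$; the extra factor $\underset{s=1}{\mathrm{res}}L^*(s,\Ad\varrho^E)$ is polynomial in $t$ and is absorbed into $\eps$.

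The main obstacle is the careful bookkeeping of archimedean $\Gamma$-factors, both in the triple-product and in the adjoint $L$-functions, to verify the polynomial scaling $C_\infty^{-1/4}$ together with the exponential cancellation. The endpoint $r=\tfrac{a+1}2$ is essentially sharp in the integration step, so any overcounting in the archimedean analysis would defeat the theorem.
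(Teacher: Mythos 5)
Your plan follows the paper's proof almost exactly: Parseval in $L^{2,-r}$, Theorem~\ref{mth} and Lemma~\ref{eismth} for the coefficients, Stirling for the archimedean factors, Hoffstein--Lockhart for $L(1,\Ad\varrho_{j'})^{-1}$, the hypothesis for the finite part, and a Weyl-law summation; the parity observation and the exponential cancellation/decay across $t_{j'}=2t_j$ are also exactly as in the paper. However, one step as written is false: the uniform bound $|c_{j'}|^2\ll(1+t_{j'})^{a-1}t_j^{a-1+\eps}$ for all $t_{j'}\le 2t_j$ does not follow from $|c_{j'}|^2\ll C_\infty^{\frac{a-1}{4}+\eps}$. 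Since $\tfrac{a-1}{4}\le0$, the bound is largest where the conductor is smallest, and
\begin{align*}
C_\infty(0,\varrho_j\otimes\ov{\varrho_j}\otimes\varrho_{j'})\asymp(1+t_{j'})^4\,(1+|2t_j+t_{j'}|)^2\,(1+|2t_j-t_{j'}|)^2
\end{align*}
degenerates to $\asymp(1+t_j)^6$ rather than $(1+t_j)^8$ as $t_{j'}\to2t_j$, because the factor $(1+|2t_j-t_{j'}|)^2$ collapses. Near this conductor-dropping point your claimed bound undershoots the true one by as much as $(1+t_j)^{\frac{1-a}{2}}$.

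The paper avoids this by splitting the cuspidal sum into three ranges and, in the transition range $t_j\le t_{j'}<3t_j$, retaining the factor $(1+|2t_j-t_{j'}|)^{\frac{a-1}{2}+\eps}$ explicitly; summing it against the Weyl density $\asymp t\,dt$ gives $\asymp t_j\cdot t_j^{\frac{a+1}{2}}$, and the total for that range is still $t_j^{2a-2r+\eps}$, so the final exponent survives --- but only after this extra verification, which is precisely the ``careful bookkeeping'' you flag as the main obstacle. Relatedly, your remark that Weyl's law gives ``density $1$ in the parameter $t$'' is wrong for the discrete spectrum (the count up to $T$ is $\asymp T^2$, so the density is $\asymp t\,dt$; density $dt/\pi$ is correct only for the Eisenstein contribution), and your displayed integral $\int_0^{2t_j}(1+t)^{a-1-2r}\,dt$ should carry an extra factor of $t$. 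As it happens neither slip changes the final exponent $\lambda_j^{\frac{a-r}{2}+\eps}$, but both must be repaired for the argument to be a proof.
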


\begin{cor}
Under the same hypotheses, set $r=\frac{a+1}2$.  Then
\begin{align}\label{qud}
\int_X f\,d\mu_j- \int_X f\,d\mu_0 \ll_{\eps}
\lambda_j^{\frac{a-1}4+\eps}\, \|f\|_{L^{2,r}}\,.
\end{align}
\end{cor}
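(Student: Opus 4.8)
The plan is to read the Corollary off Theorem~\ref{sob} via Sobolev duality, after rewriting the left-hand side as a pairing against $|\psi_j|^2-\vol(X)^{-1}$. First I would record the measure bookkeeping: since $d\mu_j(z)=|\psi_j(z)|^2\,\dvh{x}{y}$ with $\|\psi_j\|_{L^2}=1$ and $d\mu_0=\tfrac1{\vol(X)}\,\dvh{x}{y}$, for every $f$ (say $f\in C^\infty_c(X)$, or more generally $f\in L^{2,r}(X)$) one has
\begin{align*}
\int_X f\,d\mu_j-\int_X f\,d\mu_0=\int_X f(z)\,g_j(z)\,\dvh{x}{y},\qquad g_j:=|\psi_j|^2-\tfrac1{\vol(X)}.
\end{align*}
Here $g_j\in L^2(X)\subset L^{2,-r}(X)$ for every $r\ge0$, because $\psi_j$ is a fixed smooth function of rapid decay in the cusps and $X$ has finite volume, so the constant $\vol(X)^{-1}$ is square-integrable as well.

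Next I would apply the elementary duality bound between $L^{2,r}(X)$ and $L^{2,-r}(X)$: writing $\int_X f\,g_j\,\dvh{x}{y}=\langle(1-\Delta)^{r/2}f,\,(1-\Delta)^{-r/2}g_j\rangle_{L^2}$ and using the self-adjointness of $(1-\Delta)^{r/2}$ together with Cauchy--Schwarz, one gets
\begin{align*}
\Big|\int_X f\,d\mu_j-\int_X f\,d\mu_0\Big|\;\le\;\|f\|_{L^{2,r}}\,\|g_j\|_{L^{2,-r}}
\end{align*}
for any $r\in\R$. I would then specialise to $r=\tfrac{a+1}{2}$, the upper endpoint of the range $0\le r\le\tfrac{a+1}{2}$ in which Theorem~\ref{sob} applies, and insert its conclusion $\|g_j\|_{L^{2,-r}}\ll_{X,\eps}\lambda_j^{(a-r)/2+\eps}$, valid for $\lambda_j\ge\tfrac14$. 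Since
\begin{align*}
\frac{a-r}{2}=\frac{a-\tfrac{a+1}{2}}{2}=\frac{a-1}{4},
\end{align*}
this gives $\big|\int_X f\,d\mu_j-\int_X f\,d\mu_0\big|\ll_\eps\lambda_j^{(a-1)/4+\eps}\,\|f\|_{L^{2,r}}$, which is \eqref{qud}. The finitely many eigenvalues with $\lambda_j<\tfrac14$ (if any, e.g.\ hypothetical exceptional eigenvalues in the cocompact cases) are irrelevant: \eqref{qud} is a statement about the limit $\lambda_j\to\infty$, so it can be arranged to hold for all $j\ge1$ by enlarging the implied constant.

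There is no genuine obstacle here beyond Theorem~\ref{sob} itself; the argument is pure bookkeeping, the only points needing care being the normalisations of $\mu_0$ and $\mu_j$, the admissibility of the endpoint exponent $r=\tfrac{a+1}{2}$, and the harmless dismissal of small eigenvalues. For context, the hypothesis with $a=0$ is exactly the convexity bound for the triple-product central values of Theorem~\ref{mth}, and then $\tfrac{a-1}{4}=-\tfrac14$ reproduces the Luo--Sarnak prediction; any $a<1$, i.e.\ any convexity-breaking (subconvex) estimate, already yields QUE for these $X$.
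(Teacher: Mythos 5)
Your proof is correct and is precisely the argument the paper leaves implicit: rewrite $\int_X f\,d\mu_j-\int_X f\,d\mu_0$ as the $L^2$-pairing of $f$ against $g_j=|\psi_j|^2-\vol(X)^{-1}$, apply Cauchy--Schwarz in the $L^{2,r}$/$L^{2,-r}$ duality, and substitute $r=\frac{a+1}{2}$ into the bound of Theorem~\ref{sob}, noting $\frac{a-r}{2}=\frac{a-1}{4}$. One slip in your closing aside: $a=0$ corresponds to the Lindel\"of hypothesis, not the convexity bound (convexity is $a=1$, and $a<1$ is what breaks it, as you say in the next clause); the computation $\frac{a-1}{4}=-\frac14$ for $a=0$ and its match with the Luo--Sarnak exponent are both correct.
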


For $a=1$\,, these hypotheses are Phragmen--Lindel\"of
convexity bounds, while an estimate quite similar to
(\ref{qud}) follows trivially from the Sobolev inequality,
\begin{align*}
\|f\|_{L^{\infty}}\ll_{X,\eps}\|f\|_{L^{2,1+\eps}}\,.
\end{align*}
Any value $a<1$ (`breaking convexity') implies the QUE
conjecture. Furthermore, the Grand Riemann Hypothesis implies
$a=0$, a generalization of Lindel\"of's Hypothesis, and this is
best-possible. It follows from our proof that the corresponding
exponent $-\frac14$ of $\lambda_j$ in (\ref{qud}) can't be
lowered. Our methods also apply to weight-$k$ eigenforms
$\psi$, giving the same result, although more smoothness is
required from $f$.  We will present these details later.

\begin{cor}
Under the same hypotheses, set $r=0$.  Then
\begin{align}\label{l4}
\|\psi_j\|_{L^{4}} \ll_{\eps}\lambda_j^{\frac{a}4+\eps}\,.
\end{align}
\end{cor}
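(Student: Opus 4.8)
The plan is to deduce this directly from Theorem \ref{sob} with $r=0$, using only the elementary fact that the $L^4$-norm of $\psi_j$ is governed by the $L^2$-norm of $|\psi_j|^2$ after subtracting its mean. Write $\mu$ for the hyperbolic measure $\dvh{x}{y}$ on $X$, so that $\vol(X)=\mu(X)$ and $\|\psi_j\|_{L^2}^2=\int_X|\psi_j|^2\,d\mu=1$ by the normalization fixed in \S\ref{autf}. Expanding the square and using this normalization gives
\begin{align*}
\Big\||\psi_j|^2-\tfrac1{\vol(X)}\Big\|_{L^{2}}^2
=\int_X|\psi_j|^4\,d\mu-\tfrac2{\vol(X)}\int_X|\psi_j|^2\,d\mu+\tfrac1{\vol(X)}
=\|\psi_j\|_{L^4}^4-\tfrac1{\vol(X)},
\end{align*}
so that $\|\psi_j\|_{L^4}^4=\big\||\psi_j|^2-\tfrac1{\vol(X)}\big\|_{L^{2,0}}^2+\tfrac1{\vol(X)}$.

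Next I would note that $a\ge0$ forces $0\le\tfrac{a+1}2$, so $r=0$ lies in the admissible range of Theorem \ref{sob}; applying it at $r=0$ yields, for $\lambda_j\ge\tfrac14$,
\begin{align*}
\Big\||\psi_j|^2-\tfrac1{\vol(X)}\Big\|_{L^{2,0}}\ll_{X,\eps}\lambda_j^{\frac a2+\eps}.
\end{align*}
Substituting into the identity above, and using $\lambda_j\ge\tfrac14$ together with $a\le1$ to absorb the bounded constant $\tfrac1{\vol(X)}$ into $\lambda_j^{a+\eps}$, gives $\|\psi_j\|_{L^4}^4\ll_{X,\eps}\lambda_j^{a+\eps}$; taking fourth roots and renaming $\eps$ produces $\|\psi_j\|_{L^4}\ll_{X,\eps}\lambda_j^{\frac a4+\eps}$, which is the claimed bound since $X$ is fixed.

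I expect no real obstacle here: all the analytic input is already packaged in Theorem \ref{sob}, and the only content is the algebraic identity relating $\int_X|\psi_j|^4\,d\mu$ to the squared $L^2$-distance of $|\psi_j|^2$ from its mean, plus the trivial check that $r=0$ is in $[0,\tfrac{a+1}2]$. One remark worth recording in the write-up is that at $a=0$ — which follows from the Grand Riemann Hypothesis — this gives $\|\psi_j\|_{L^4}\ll_\eps\lambda_j^{\eps}$, matching the expected size of an $L^2$-normalized random wave and hence best possible up to $\lambda_j^{\eps}$.
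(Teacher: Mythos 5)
Your proof is correct and is essentially the argument the paper intends, though the paper leaves it implicit. Theorem~\ref{sob}'s statement at $r=0$ is exactly the $L^2$ bound on $|\psi_j|^2-\tfrac1{\vol(X)}$, and the elementary expansion $\big\||\psi_j|^2-\tfrac1{\vol(X)}\big\|_{L^2}^2=\|\psi_j\|_{L^4}^4-\tfrac1{\vol(X)}$ (using $\|\psi_j\|_{L^2}=1$) is the only step needed; the constant $\tfrac1{\vol(X)}$ is harmless since $\lambda_j\ge\tfrac14$ and $a\ge0$. This is the standard passage from a mean-subtracted $L^2$ bound on $|\psi_j|^2$ to an $L^4$ bound on $\psi_j$, and nothing further is required.
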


The exponent $\frac14$ of $\lambda_j$ in (\ref{l4})
corresponding to $a=1$ is not the best which is presently known
for general surfaces ($\frac1{16}$ is proved in
\cite{seegsogg}), and it is not the best that can be done using
Theorem \ref{mth}. In joint work with Sarnak, we have obtained
the exponent $\frac1{24}$ unconditionally, and we hope to
improve this further.

Another related problem in quantum chaos concerns the amplitude
distribution of high energy eigenstates.  Viewing $(X,\mu_0)$
as a probability space, each $\psi_j$ becomes an $\R$-valued
random variable with cumulative distribution function
\begin{align*}
f_j(t)=\mu_0(\psi_j^{-1}(-\infty,t]).
\end{align*}
Berry/Hejhal's random wave conjecture asserts that these $f_j$
converge in a suitable sense to the normal distribution $\mathcal
N(0,\vol(X)^{-\frac12})$ as $\lambda_j\to\infty$.  It follows from
the proof of QE that the truth of the random wave conjecture for
fourth moments alone would imply QUE. Regarding third moments,
this conjecture says
\begin{align*}
\tfrac1{\vol(X)}\int_X \psi_j^3(z)\,\dvh{x}{y}\longrightarrow0
\hspace{20pt}\text{as \,}\lambda_j\to\infty.
\end{align*}
We now prove a stronger form of this (unconditionally) for the
full modular group.

\begin{thm}\label{thrd}
Fix $X=\SL_2\Z\lmod\h$ and let the $\psi_j\in L^2(X)$ be
defined as before.  Then
\begin{align*}
\int_X \psi_j^3(z)\,\dvh{x}{y}\ll_\eps\lambda_j^{-\frac1{12}+\eps}.
\end{align*}
\end{thm}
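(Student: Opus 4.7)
The plan is to specialize Theorem~\ref{mth} to $\psi_1=\psi_2=\psi_3=\psi_j$ on $X=\SL_2\Z\lmod\h$ (where $d_B=N=1$, so every local factor $Q_p=1$) and reduce the claim to standard $L$-function bounds. With $\psi_j$ normalized by $\int_X|\psi_j|^2\,\dvh{x}{y}=1$ as in the preceding discussion of quantum chaos, the identity becomes
\begin{align*}
\left|\int_X \psi_j^3(z)\,\dvh{x}{y}\right|^2 = Q\cdot\frac{L^*(\tfrac12,\varrho_j^{\otimes 3})}{L^*(1,\Ad\varrho_j)^3},
\end{align*}
with $Q=Q_\infty=(1+\veps_{j\infty})/2$. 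For odd $\psi_j$ the right side vanishes, matching the trivial vanishing of $\int_X\psi_j^3$; so we may assume $\psi_j$ is even.

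Next I factor the Galois representation. Since $\psi_j$ has trivial central character, at every place $\varrho_j^{\otimes 3}\simeq{\rm Sym}^3\varrho_j\oplus 2\cdot\varrho_j$, so the finite part of the numerator splits as
\begin{align*}
L_{\fin}(\tfrac12,\varrho_j^{\otimes 3}) = L(\tfrac12,{\rm Sym}^3\psi_j)\cdot L(\tfrac12,\psi_j)^2.
\end{align*}
Writing $\lambda_j=\tfrac14+t_j^2$ and reading off the archimedean parameters from the local Langlands section, a Stirling calculation shows that the exponential factors $e^{-3\pi t_j}$ contributed by the gammas in the numerator cancel exactly against those from the denominator, leaving pure polynomial decay
\begin{align*}
\frac{L_\infty(\tfrac12,\varrho_j^{\otimes 3})}{L_\infty(1,\Ad\varrho_j)^3}\asymp t_j^{-2}.
\end{align*}

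Finally I insert three standard analytic inputs: Weyl-type $t$-aspect subconvexity for the degree-two $L$-function, $L(\tfrac12,\psi_j)\ll t_j^{1/3+\eps}$ (Jutila--Meurman); Phragmen--Lindel\"of convexity for the symmetric cube, $L(\tfrac12,{\rm Sym}^3\psi_j)\ll t_j^{1+\eps}$ (since the analytic conductor is $\asymp t_j^4$); and the Hoffstein--Lockhart lower bound $L(1,\Ad\varrho_j)\gg t_j^{-\eps}$. Combining,
\begin{align*}
\left|\int_X\psi_j^3(z)\,\dvh{x}{y}\right|^2\ll_\eps t_j^{-2}\cdot t_j^{1+\eps}\cdot t_j^{2/3+\eps} = t_j^{-1/3+\eps},
\end{align*}
and the theorem follows from $t_j\asymp\lambda_j^{1/2}$.

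The decisive step is the Weyl subconvex input; convexity alone for $L(\tfrac12,\psi_j)$ would only give $t_j^\eps$, i.e.\ the trivial bound. The archimedean cancellation and the Galois-theoretic factorization are formal consequences of the setup, while the whole non-trivial saving of $1/12$ in the $\lambda_j$ exponent comes from applying the Weyl saving of $1/6$ to the degree-two $L$-function, squared through the multiplicity of $\varrho_j$ in the decomposition of $\varrho_j^{\otimes 3}$.
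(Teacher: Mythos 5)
Your proposal is correct and follows essentially the same route as the paper: specialize Theorem~\ref{mth} to $\psi_1=\psi_2=\psi_3=\psi_j$, factor $\otimes^3\varrho_j\simeq{\rm Sym}^3\varrho_j\oplus 2\varrho_j$, verify via Stirling that the archimedean quotient is $\asymp(1+|t_j|)^{-2}$, and combine convexity for ${\rm Sym}^3$ (conductor $\asymp t_j^4$), the $t$-aspect subconvex bound $L(\tfrac12,\varrho_j)\ll t_j^{1/3+\eps}$ of Ivi\'c--Jutila, and Hoffstein--Lockhart. The one point you pass over too quickly is that even the \emph{convexity} bound for $L(\tfrac12,{\rm Sym}^3\varrho_j)$ is not automatic: it requires knowing that this $L$-function is entire of finite order and cuspidal on $\GL_4$ (Kim--Shahidi, Gelbart--Shahidi) together with Molteni's bounds at the edge of the critical strip, which is exactly what the paper supplies before invoking Phragmen--Lindel\"of.
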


\subsection{Proofs}

\begin{lem}{\rm(Weyl's Law)}
Writing $\lambda_j=\tfrac14-s_j^2,\ s_j=\sigma_j+\ic t_j$,
\begin{align*}
\sum_{0\le t_j<T}1=\tfrac{{\rm vol}(X)}{4\pi}\,T^2+O_{X,\eps}
\big((1+T)^{1+\eps}\big),
\end{align*}
so for continuous $f$ of bounded variation on
$[a,b)\subset[0,\infty)$,
\begin{align*}
&\sum_{a\le t_j<b}f(t_j)-\tfrac{{\rm vol}(X)}{2\pi}
\int_a^b tf(t)\,dt\\[-4pt]&\hspace{40pt}\ll_{X,\eps}
(1+a)^{1+\eps}|f(a)|+(1+b)^{1+\eps}|f(b)|+\int_a^b
(1+t)^{1+\eps}\,|df|(t).
\end{align*}
\end{lem}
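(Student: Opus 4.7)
The plan is to derive the counting asymptotic from Selberg's trace formula and then obtain the weighted sum estimate by Riemann--Stieltjes summation by parts.

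I would apply Selberg's trace formula, with the convention $t_j \ge 0$, to an even test function $h_T$ approximating $\ind_{[-T,T)}$ whose Fourier transform is essentially supported in a fixed window. On the spectral side, $\sum_j h_T(t_j) = \#\{j : 0\le t_j < T\} + O(1)$. On the geometric side, the identity element produces the Plancherel main term
\begin{align*}
\frac{\vol(X)}{4\pi}\int_{-T}^{T} t\tanh(\pi t)\,dt \;=\; \frac{\vol(X)}{4\pi}\,T^{2} + O(1),
\end{align*}
while the hyperbolic conjugacy classes contribute $O_{X}(T^{1+\eps})$ by standard estimates (e.g.\ the prime geodesic theorem with a polynomial remainder).

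For $d_B > 1$ the surface $X$ is compact and the argument ends there. For $d_B = 1$ (the modular surface) one must also subtract the continuous-spectrum contribution
\begin{align*}
-\frac{1}{4\pi}\int_{-T}^{T}\frac{\phi'}{\phi}\bigl(\tfrac12 + \ic t\bigr)\,dt,
\end{align*}
where $\phi(s) = \Lambda(2s-1)/\Lambda(2s)$ is the scattering determinant. The Stirling--convexity bound $\phi'/\phi(\tfrac12 + \ic t) \ll \log(2+|t|)$ makes this $O(T\log T)$, comfortably inside $O(T^{1+\eps})$.

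For the weighted statement, set $N(t) = \#\{j : 0\le t_j < t\}$ and $E(t) = N(t) - \tfrac{\vol(X)}{4\pi}\,t^{2}$, so $|E(t)| \ll_{X,\eps} (1+t)^{1+\eps}$ by the first part. Riemann--Stieltjes integration gives
\begin{align*}
\sum_{a\le t_j < b} f(t_j) \;-\; \frac{\vol(X)}{2\pi}\int_a^{b} t f(t)\,dt \;=\; \int_a^{b} f(t)\,dE(t),
\end{align*}
and integration by parts rewrites the right side as $f(b)E(b) - f(a)E(a) - \int_a^{b} E(t)\,df(t)$, whose absolute value is dominated by the stated bound (interpreting $|df|$ as the total-variation measure).

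The main obstacle is obtaining a sufficiently sharp Eisenstein remainder in the non-compact case; for the modular surface this reduces to the classical bound on $\phi'/\phi$ cited above, and for $d_B > 1$ the surface is compact so the issue does not arise.
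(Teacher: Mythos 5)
The paper states this lemma without proof, treating it as a standard consequence of the Selberg trace formula, and your sketch reconstructs exactly that standard argument: trace formula with a smoothed indicator for the counting asymptotic, then Riemann--Stieltjes partial summation for the weighted form. The second half of your argument is complete and correct as written. In the first half there are two small points to tighten, neither fatal since the target remainder is only $O_{X,\eps}((1+T)^{1+\eps})$. First, the unsmoothing step $\sum_j h_T(t_j)=\#\{j: 0\le t_j<T\}+O(1)$ is not right as stated: the discrepancy is controlled by the number of $t_j$ in the transition window near $T$, which is $O(T)$ by the local Weyl law (itself obtained by the usual bootstrap with a fixed bump), not $O(1)$; since $O(T)$ is absorbed into the error term this costs nothing, but it should be stated correctly. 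Second, the geometric side also contains elliptic contributions (the quaternionic lattices $\ord^{(1)}(d_B)$ have torsion, as does $\SL_2(\Z)$) and, in the non-compact case, parabolic terms beyond the scattering integral; these contribute $O(T)$ and $O(T\log T)$ respectively and again sit inside the claimed error, but a complete write-up must account for them rather than only the identity and hyperbolic classes. With those caveats the proposal is a correct proof of the lemma.
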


\begin{proof}[Proof of Theorem \ref{sob}]
Since $|\psi_j|^2\in L^2(X)$, we can apply Parseval's formula:
\begin{align*}
\big\||\psi_j|^2-\tfrac1{{\rm vol}(X)} \big\|^2_{L^{2,-r}}
&=\sum_{j'>0} (1+\lambda_{j'})^{-r}\,\big|\big\langle|\psi_j|^2,
\psi_{j'}\big\rangle\big|^2\\
&\hspace{20pt}+\int_{\R^+}(\tfrac54+t^2)^{-r}\,\big|\big\langle
|\psi_j|^2,E^1(\,\cdot\,,\tfrac12+\ic t)
\big\rangle\big|^2\,\tfrac{dt}\pi\,,
\end{align*}
where the integral only appears if $d_B=1$. First we analyze the
individual terms, using Theorem \ref{mth} and Lemma \ref{eismth}.
We may assume $\veps_{j'\infty}=1$, or else the corresponding term
vanishes by symmetry, and also $\lambda_j\ge\tfrac14$. Then
\begin{align*}
L_\infty(\tfrac12,\varrho_j\times\ov{\varrho_j}\times\varrho_{j'})
&=\zeta_\R(\tfrac12+2s_j+s_{j'}) \,\zeta_\R(\tfrac12+s_{j'})^2\,
\zeta_\R(\tfrac12-2s_j+s_{j'})\\&\hspace{20pt}
\cdot\,\zeta_\R(\tfrac12+2s_j-s_{j'})\,\zeta_\R(\tfrac12-s_{j'})^2\,
\zeta_\R(\tfrac12-2s_j-s_{j'}),\\[2pt]
C_\infty(0,\varrho_j\times\ov{\varrho_j}\times\varrho_{j'})&=
(1+|2s_j+s_{j'}|)^2\, (1+|s_{j'}|)^4\,
(1+|2s_j-s_{j'}|)^2,\\[8pt]
L_\infty(\tfrac12,\varrho_j\times\ov{\varrho_j}\times\varrho^E)
&=\zeta_\R(\tfrac12+2s_j+\ic t)\,\zeta_\R(\tfrac12+\ic t)^2\,
\zeta_\R(\tfrac12-2s_j+\ic t)\\&\hspace{20pt}
\cdot\,\zeta_\R(\tfrac12+2s_j-\ic t)\,\zeta_\R(\tfrac12-\ic t)^2\,
\zeta_\R(\tfrac12-2s_j-\ic t),\\[2pt]
C_\infty(0,\varrho_j\times\ov{\varrho_j}\times\varrho^E)&=
(1+|2s_\infty+\ic t|)^2\,(1+|\ic t|)^4\,(1+|2s_\infty-\ic t|)^2.
\end{align*}
Recall Stirling's approximation: For $s=\sigma+\ic t$ and
$|\arg(s)|\le\pi-\eps<\pi$,
\begin{align*}
\Gamma(s)\phantom{|}&=(1+O_\eps(|s|^{-1}))\,(2\pi)^{\frac12}\,
s^{s-\frac12}\,e^{-s},\\
|\Gamma(s)|&=(1+O_\eps(|s|^{-1}))\,(2\pi)^{\frac12}\,
|s|^{\sigma-\frac12}\,e^{-\sigma-t\arg(s)}.
\end{align*}
We will write $X\asymp Y$ in place of $X\ll Y\ll X$. Then
\begin{align*}
L_\infty(\tfrac12,\varrho_j\times\ov{\varrho_j}\times\varrho_{j'})
&\asymp\frac{e^{-\frac\pi2(|2t_j+t_{j'}|+2|t_{j'}|+ |2t_j-t_{j'}|)}}
{C(0,\varrho_j\times\ov{\varrho_j}\times\varrho_{j'})^{\frac14}}\,,\\[4pt]
L_\infty(1,{\rm Ad}\varrho_{j'})&\asymp e^{-\pi |t_{j'}|}\,,\\[4pt]
\frac{L_\infty(\tfrac12,\varrho_j\times\ov{\varrho_j}\times\varrho_{j'})}
{L_\infty{(1,{\rm Ad}\varrho_j)}^2\, L_\infty(1,{\rm Ad}\varrho_{j'})}
&\asymp\frac{e^{\frac\pi2(|2t_j+t_{j'}|-2|2t_j|+ |2t_j-t_{j'}|)}}
{C(0,\varrho_j\times\ov{\varrho_j}\times\varrho_{j'})^{\frac14}}\,,\\[12pt]
L_\infty(\tfrac12,\varrho_j\times\ov{\varrho_j}\times\varrho^E)
&\asymp\frac{e^{-\frac\pi2(|2t_j+t|+2|t|+|2t_j-t|)}}
{C(0,\varrho_j\times\ov{\varrho_j}\times\varrho^E)^{\frac14}}\,,\\[4pt]
\underset{z=1}{\rm res}\,L_\infty(z,{\rm Ad}\varrho^E)&\asymp
e^{-\pi|t|}\,,\\[2pt]
\frac{L_\infty(\tfrac12,\varrho_j\times\ov{\varrho_j}\times\varrho^E)}
{L_\infty{(1,{\rm Ad}\varrho_j)}^2\,\underset{z=1}{\rm res}\,
L_\infty(z,{\rm Ad}{\varrho}^E)}&\asymp\frac{e^{-\frac\pi2
(|2t_j+t|-2|2t_j|+ |2t_j-t|)}}{C(0,\varrho_j\times
\ov{\varrho_j}\times\varrho^E)^{\frac14}}\,.
\end{align*}
Note that
\begin{align*}
|a+b|-2|a|+|a-b|=2\max\{0,|b|-|a|\}.
\end{align*}
Finally, recall the estimate of Hoffstein--Lockhart \cite{hl},
\begin{align*}
L(1,{\rm Ad}\varrho_{j'})^{-1}\ll_\eps (1+|t_{j'}|)^{\eps}
\hspace{20pt}\text{for all \,}\eps>0.
\end{align*}

We divide the discrete spectrum contributions into
\begin{align*}
{\rm I}=\{0\le t_{j'}< t_j\},\hspace{20pt}
{\rm II}=\{t_j\le t_{j'}<3t_{j}\},\hspace{20pt}
{\rm III}=\{3 t_j\le t_{j'}\}.
\end{align*}
Then
\begin{align*}
\sideset{}{_I}\sum&\ll_\eps (1+t_j)^{a-1+\eps}\,
\sideset{}{_I}\sum(1+t_{j'})^{a-1-2r+\eps}\\
&\ll_\eps(1+t_j)^{\max\{a-1,2a-2r\}+\eps}
\ll_\eps(1+t_j)^{2a-2r+\eps},\\[10pt]
\sideset{}{_II}\sum&\ll_\eps(1+t_j)^{\frac32(a-1)-2r+\eps}\,
\sideset{}{_II}\sum(1+|2t_j-t_{j'}|)^{\frac12(a-1)+\eps}\\
&\ll_\eps(1+t_j)^{2a-2r+\eps},\\[10pt]
\sideset{}{_III}\sum&\ll_\eps(1+t_j)^{\eps}\,
\sideset{}{_III}\sum(1+t_{j'})^{2a-2-2r+\eps}\,
e^{-\pi(t_{j'}-2t_j)}\\
&\ll_\eps e^{-(\pi-\eps) t_j}
\ll (1+t_j)^{2a-2r}.
\end{align*}
Similarly, we divide up the continuous spectrum:
\begin{align*}
\int_0^{t_j}&\ll_\eps(1+t_j)^{a-1+\eps}
\int_0^{t_j}(1+t)^{a-1-2r+\eps}\,dt\\
&\ll_\eps(1+t_j)^{\max\{a-1,2a-2r-1\}+\eps}
\ll_\eps(1+t_j)^{2a-2r+\eps},\\[10pt]
\int_{t_j}^{3t_j}&\ll_\eps(1+t_j)^{\frac32(a-1)-2r+\eps}
\int_{t_j}^{3t_j}(1+|2t_j-t|)^{\frac12(a-1)+\eps}\,dt\\
&\ll_\eps(1+t_j)^{2a-2r-1+\eps},\\[10pt]
\int_{3t_j}^{\infty}&\ll_\eps(1+t_j)^{\eps}
\int_{3t_j}^{\infty} (1+t)^{2a-2-2r+\eps}\,
e^{-\pi(t-2t_j)}\,dt\\
&\ll_\eps e^{-(\pi-\eps) t_j}
\ll(1+t_j)^{2a-2r}.
\end{align*}
\end{proof}

\begin{proof}[Proof of Theorem \ref{thrd}]
First note that
\begin{align*}
L_v(s,\otimes^3\varrho_j)=L_v(s,{\rm Sym}^3\varrho_j)\,
L_v(s,\varrho_j)^2.
\end{align*}
Then for $\lambda_j\ge\frac14$,
\begin{align*}
\frac{L_\infty(\tfrac12,\otimes^3\varrho_j)}
{L_\infty{(\tfrac12,{\rm Ad}\varrho_j)}^3}\asymp
(1+|t_j|)^{-2},\hspace{30pt}
C_\infty(0,{\rm Sym}^3\varrho_j)\asymp(1+|t_j|)^4.
\end{align*}
To prove the convexity bound for $L(s,{\rm Sym}^3\varrho_j)$,
we first note that it is entire by the work of Kim--Shahidi
\cite{kimshah}, and has finite order by the work of
\mbox{Gelbart--Shahidi \cite{gelshah}}. Furthermore, using the
converse theorem of Cogdell--Piatetski-Shapiro \cite{cogshap},
Kim--Shahidi proved that ${\rm Sym}^3\varrho_j$ is cuspidal on
$\GL_4$ \cite{kimshah2}. Recent results of Molteni \cite{mol}
then imply
\begin{align*}
L(1+\ic t,{\rm Sym}^3\varrho_j)&\ll_\eps
C_\infty(t,{\rm Sym}^3\varrho_j)^{\eps},\\
L(0+\ic t,{\rm Sym}^3\varrho_j)&\ll_\eps
C_\infty(t,{\rm Sym}^3\varrho_j)^{\frac12+\eps},
\end{align*}
and hence by the Phragmen--Lindel\"of/Hadamard three-circles
method,
\begin{align*}
L(\tfrac12,{\rm Sym}^3\varrho_j)\ll_\eps
C(0,{\rm Sym}^3\varrho_j)^{\frac14+\eps}
\ll(1+|t_j|)^{1+\eps}.
\end{align*}
Now to prove that the third moment tends to zero, we only need
subconvexity for $L(\tfrac12,\varrho_j)$. The first such
estimate was proved conditionally by Iwaniec in \cite{iwan};
his result is made unconditional at a small cost to the
exponent by the same technique as in \cite{iwsar}. The current
record bound is due to Ivi\'c and subsequently Jutila
\cite{ivic,jut}:
\begin{align*}
L(\tfrac12,\varrho_j)\ll_\eps(1+|t_j|)^{\frac13+\eps}.
\end{align*}
Therefore
\begin{align*}
\left|\int_X\psi_j^3(z)\,\dvh{x}{y}\right|^2\ll_\eps
(1+|t_j|)^{-\frac13+\eps}.
\end{align*}
\end{proof}


\begin{thebibliography}{}

\bibitem{bsp} S.\ B\"ocherer, R.\ Schulze-Pillot, On the
    central critical value of the triple product $L$-function,
    {\it  Number theory} (Paris, 1993--1994), London Math. Soc.
    Lecture Note Ser. 235, Cambridge Univ. Press, Cambridge
    (1996), 1--46.

\bibitem{cogshap} J.\ Cogdell, I.\ Piatetski-Shapiro, A
    converse theorem for ${\rm GL}\sb 4$, {\it Math. Res.
    Lett.} 3 (1996), 67--76.

\bibitem{gh} P.\ Garrett, M.\ Harris, Special values of triple
    product $L$-functions, {\it Amer. J. Math.} 115 (1993),
    161--240.

\bibitem{gel} S.\ Gelbart, {\it Automorphic Forms on Adele
    Groups}, Annals of Mathematics Studies 83, Princeton Univ.
    Press (1975)

\bibitem{gelshah} S.\ Gelbart, F.\ Shahidi, Boundedness of
    automorphic $L$-functions in vertical strips, {\it J. Amer.
    Math. Soc.} 14 (2001), 79--107.

\bibitem{gode} R.\ Godement, {\it Notes on Jacequet--Langlands
    Theory}, IAS mimeograph (1970)

\bibitem{gk} B.\ Gross and S.\ Kudla, Heights and the central
    critical values of triple product $L$-functions, {\it
    Compositio Math.} 81 (1992), 143--209.

\bibitem{hk1} M.\ Harris and S.\ Kudla, The central critical
    value of a triple product $L$-function, {\it Annals of
    Math.} 133 (1991), 605--702

\bibitem{hk2} M.\ Harris and S.\ Kudla, Arithmetic automorphic
    forms for the non-holomorphic discrete series of GSp(2),
    {\it Duke Math. J.} 66 (1992), 59--121.

\bibitem{hej} D.A.\ Hejhal, {\it The Selberg Trace Formula for
    PSL(2,R)}, Lecture Notes in Math. 1001 (1983)

\bibitem{hl} J.\ Hoffstein, P.\ Lockhart, Coefficients of Maass
    forms and the Siegel zero, {\it Ann. of Math.} (2) 140
    (1994), 161--181.

\bibitem{ik} T.\ Ikeda, On the gamma factor of the triple
    $L$-function, I, {\it Duke Math. J.} 97 (1999), 301--318.

\bibitem{ivic} A.\ Ivi\'c, On sums of Hecke series in short
    intervals, {\it to appear}

\bibitem{iwan} H.\ Iwaniec, The spectral growth of automorphic
    $L$-functions, {\it J. Reine Angew. Math.} 428 (1992),
    139--159.

\bibitem{iwsar} H.\ Iwaniec, P.\ Sarnak, $L^\infty$ norms of
    eigenfunctions of arithmetic surfaces, {\it  Ann. of Math.}
    (2) 141 (1995), 301--320.

\bibitem{iwsar2} H.\ Iwaniec, P.\ Sarnak, Perspectives on the
    analytic theory of $L$-functions, GAFA 2000 (Tel Aviv,
    1999), {\it Geom. Funct. Anal.} (2000) Special Volume, Part
    II, 705--741.

\bibitem{jl} H.\ Jacquet, R.\ Langlands, {\it Automorphic forms
    on GL(2)}, Lecture Notes in Math. 114, Springer (1970)

\bibitem{jut} M.\ Jutila, The fourth moment of central values
    of Hecke series, {\it Number theory} (Turku, 1999), de
    Gruyter, Berlin (2001), 167--177.

\bibitem{kimshah} H.\ Kim, F.\ Shahidi, Symmetric cube
    $L$-functions for $\rm GL\sb 2$ are entire, {\it Ann. of
    Math.} 150 (1999), 645--662.

\bibitem{kimshah2} H.\ Kim, F.\ Shahidi, Functorial products
    for $\rm GL\sb 2\times GL\sb 3$ and functorial symmetric
    cube for $\rm GL\sb 2$, {\it C. R. Acad. Sci. Paris Sér. I
    Math.} 331 (2000), 599--604.

\bibitem{knapp} A.\ Knapp, Local Langlands correspondence: the
    Archimedean case, {\it Motives} (Seattle, 1991), Proc.
    Sympos. Pure Math. 55, Part 2, Amer. Math. Soc.,
    Providence, RI (1994), 393--410.

\bibitem{kudla} S.\ Kudla, The local Langlands correspondence:
    the non-Archimedean case, {\it Motives} (Seattle, 1991),
    Proc. Sympos. Pure Math. 55, Part 2, Amer. Math. Soc.,
    Providence, RI (1994), 365--391.

\bibitem{lv} G.\ Lione and M.\ Vergne, {\it The Weil
    representation, Maslov index, and theta series},
    Birkhauser, Boston (1980)

\bibitem{luosar} W.\ Luo, P.\ Sarnak, Quantum ergodicity of
    eigenfunctions on $\PSL_2({\rm\bf Z})\lmod{\rm\bf H}^2$,
    {\it Inst. Hautes \'Etudes Sci. Publ. Math.} 81 (1995),
    207--237.

\bibitem{mol} G.\ Molteni, Upper and lower bounds for certain
    $L$-functions at $s=1$ for certain Dirichlet series with
    Euler product, {\it to appear in Duke Math. J.}

\bibitem{mi1} T.\ Miyake, On automorphic forms on ${\rm
    GL}\sb{2}$ and Hecke operators, {\it Ann. of Math.} 94
    (1971), 174--189.

\bibitem{mi2} T.\ Miyake, {\it Modular Forms},
    Springer--Verlag, Berlin (1989)

\bibitem{pi} A.\ Pizer, Hecke operators for $\Gamma \sb{0}(N)$,
    {\it J. Algebra} 83 (1983), 39--64.

\bibitem{rudsar} Z.\ Rudnick, P.\ Sarnak, The behaviour of
    eigenstates of arithmetic hyperbolic manifolds, {\it Comm.
    Math. Phys.} 161 (1994), 195--213.

\bibitem{seegsogg} A.\ Seeger, C.\ Sogge, Bounds for
    eigenfunctions of differential operators, {\it Indiana
    Univ. Math. J.} 38 (1989), 669--682.

\bibitem{schur} P.\ Sarnak, Arithmetic quantum chaos, The Schur
    lectures (Tel Aviv, 1992), {\it Israel Math. Conf. Proc.} 8
    (1995), 183--236.

\bibitem{shah} F.\ Shahidi, Local coefficients as Artin factors
    for real groups, {\it Duke Math. J.} 52 (1985), 973--1007.

\bibitem{psr} I.\ Piatetski-Shapiro and S.\ Rallis, Rankin
    triple $L$-functions, {\it Comp. Math.} 64 (1987), 31--115.

\bibitem{sz} H.\ Shimizu, Theta series and automorphic forms on
    $\GL_2$, {\it J.\ Math.\ Soc.\ Japan} 24 (1972), 638--683.

\bibitem{sz2} H.\ Shimizu, On zeta functions of quaternion
    algebras, {\it Ann. of Math.} 81 (1965), 166--193.

\bibitem{gs} G.\ Shimura, {\it Introduction to the Arithmetic
    Theory of Automorphic Functions}, Princeton University
    Press, Princeton (1971)

\bibitem{gs2} G.\ Shimura, On certain zeta functions attached
    to two Hilbert modular forms. II. The case of automorphic
    forms on a quaternion algebra, {\it Ann. of Math.} 114
    (1981), 569--607.

\bibitem{ow} D.\ Ornstein, B.\ Weiss, Geodesic flows are
    Bernoullian, {\it Israel J. Math.} 14 (1973), 184--198.

\bibitem{tate} J.\ Tate, Number theoretic background, {\it
    Automorphic forms, representations and $L$-functions}
    (Corvallis, 1977), Proc. Sympos. Pure Math. 33, Part 2,
    Amer. Math. Soc., Providence, R.I. (1979), 3--26.

\bibitem{vig} M-F.\ Vign\'eras, {\it Arithm\'etique des
    alg\`ebres de quaternions}, Lecture Notes in Math. 800,
    Springer, Berlin (1980)

\bibitem{weil} A.\ Weil, {\it Basic Number Theory}, Die
    Grundlehren der mathematischen Wissenschaften, Band 144
    Springer--Verlag, New York (1967)

\bibitem{weil2} A.\ Weil, {\it Adeles and Algebraic Groups},
    Progress in Mathematics 23, Birkhäuser, Boston (1982)

\end{thebibliography}
\end{document}